\newtheorem{thm}{Theorem}[section]
\newtheorem{prop}[thm]{Proposition}
\newtheorem{defn}[thm]{Definition}
\newtheorem{lemma}[thm]{Lemma}
\newtheorem{cor}[thm]{Corollary}
\theoremstyle{definition}
\newtheorem{rmk}[thm]{Remark}
\newcommand{\A}{\mathbb{A}}
\newcommand{\F}{\mathbb{F}}
\newcommand{\C}{\mathbb{C}}
\newcommand{\R}{\mathbb{R}}
\newcommand{\Q}{\mathbb{Q}}
\renewcommand{\H}{\mathbb{H}}
\newcommand{\ra}{\rightarrow}
\newcommand{\Sw}{\mathscr{S}}
\newcommand{\Gm}{\mathbb{G}_m}
\newcommand{\D}{D^{\times}}
\newcommand{\JL}{\operatorname{JL}}
\newcommand{\Ind}{\operatorname{Ind}}
\newcommand{\Hom}{\operatorname{Hom}}
\newcommand{\im}{\operatorname{Im}}
\newcommand{\Irr}{\operatorname{Irr}}
\newcommand{\GSp}{\operatorname{GSp}}
\newcommand{\GSO}{\operatorname{GSO}}
\newcommand{\GO}{\operatorname{GO}}
\newcommand{\Sp}{\operatorname{Sp}}
\newcommand{\SO}{\operatorname{SO}}
\newcommand{\OO}{\operatorname{O}}
\newcommand{\GL}{\operatorname{GL}}
\newcommand{\bsigma}{\boldsymbol{\sigma}}
\newcommand{\disc}{\operatorname{disc}}
\newcommand{\M}{\operatorname{M}_{2\times2}}
\newcommand{\Id}{\operatorname{Id}}
\newcommand{\tr}{\operatorname{tr}}
\newcommand{\ch}{\operatorname{char}\,}
\renewcommand{\Re}{\operatorname{Re}}
\newcommand{\ie}{{\em i.e. }}
\begin{document}


\title[local-global non-vanishing of theta lifts]%
{Some local-global non-vanishing results of theta lifts\\ for symplectic-orthogonal dual pairs}


\author{Shuichiro Takeda}
\address{Department of Mathematics\\
Purdue University\\
150 N. University Street\\
West Lafayette, IN 47907-2067} \email{stakeda@math.purdue.edu}
\urladdr{http://www.math.purdue.edu/\char126 stakeda}

\keywords{automorphic representation, theta correspondence, theta
lifting}

\date{\today}

\begin{abstract}
Following the approach of B. Roberts, we characterize the non-vanishing of the global theta lift for symplectic-orthogonal dual pairs in terms of its local counterpart. In particular, we replace the temperedness assumption present in Robert's work by a certain weaker assumption, and apply our results to small rank similitude groups. Among our applications is a certain instance of Langlands functorial transfer of a (non-generic) cuspidal automorphic representation of $\GSp(4)$ to $\GL(4)$.
\end{abstract}

\maketitle


\section{Introduction}\label{S:intro}


In this paper, we consider both local and global theta lifting for the symplectic-orthogonal dual pair $(\Sp(2n),\OO(V_r))$, where $V_r$ is a symmetric space of an even dimension $m$ and the Witt index $r$, with emphasis on the non-vanishing problem of global theta lifts for this pair. In particular by following the approach by B. Roberts \cite{Rob99-2}, we characterize the global non-vanishing in terms of its local counterpart. Our main theorem is

\begin{thm}\label{T:main1}
Let $\pi$ (resp. $\sigma$) be a cuspidal automorphic representation of $\Sp(2n,\A)$ (resp. $\OO(V_r,\A)$) realized in a space $V_\pi$ (resp, $V_\sigma$) of cusp forms such that at each place $v$, $\pi_v$ (resp. $\sigma_v$) is bounded by some $e_v<1$. Assume $\chi$ is the quadratic character associated with $V_r$. Then
\begin{enumerate}[(1)]
\item (From orthogonal to symplectic.) Suppose $n=\frac{1}{2}\dim V_r$. If $\sigma_v$ has a non-zero theta lift to $\Sp(2n,F_v)$ at all the places $v$ and the (incomplete) standard $L$-function $L^S(s,\sigma)$ does not vanish at $s=1$ (a pole is allowed), then the global theta lift $\Theta_n(V_\sigma)$ to $\Sp(2n,\A)$ does not vanish, and further if $L^S(s,\sigma)$ has a pole at $s=1$, then the global theta lift $\Theta_{n-1}(V_\sigma)$ to $\Sp(2n-2,\A)$ does not vanish, provided $n\geq 1$.
\item (From symplectic to orthogonal.) Suppose $\frac{1}{2}\dim V_r=n+1$. If $\pi_v$ has a non-zero theta lift to $\OO(V_r,F_v)$ at all the places $v$ and the (incomplete) standard $L$-function $L^S(s,\pi)$ does not vanish at $s=1$ (a pole is allowed), then the global theta lift $\Theta_{V_r}(V_\pi)$ to $\OO(V_r,\A)$ does not vanish, and further if $L^S(s,\pi,\chi)$ has a pole at $s=1$, then the global theta lift $\Theta_{V_{r-1}}(V_\pi)$ to $\OO(V_{r-1},\A)$ does not vanish, provided $r\geq 1$.
\end{enumerate}
\end{thm}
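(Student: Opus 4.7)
The plan is to follow the doubling-method strategy of B. Roberts, expressing the Petersson inner product of the global theta lift via the Rallis inner product formula. For part (1), starting from cusp forms $f_1, f_2 \in V_\sigma$ and Schwartz data $\phi_1, \phi_2$, I would unfold the inner product $\langle \theta_{\phi_1}(f_1), \theta_{\phi_2}(f_2)\rangle_{\Sp(2n,\A)}$ against a doubled theta kernel on $\OO(V_r \oplus V_r)$. By the (Kudla--Rallis regularized) Siegel--Weil formula in the boundary range $2n = \dim V_r$, the resulting expression is a residue or value of a Siegel Eisenstein series, which then factors as a product of local doubling zeta integrals times a global factor involving $L^S(s,\sigma)$ at $s=1$. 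Non-vanishing of the global lift thereby reduces to non-vanishing of each local factor together with non-vanishing (or a pole) of the L-value.

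The heart of the argument is the local step. By the local doubling theory, non-vanishing of the local theta lift of $\sigma_v$ is equivalent to the existence of data for which the local doubling zeta integral is non-zero at the critical point. The key refinement over Roberts is replacing the temperedness hypothesis on $\sigma_v$ by the weaker bound $e_v<1$; this suffices to guarantee absolute convergence and holomorphy of the local zeta integral at the relevant point via standard estimates on matrix coefficients, so that the local non-vanishing can be transferred directly to non-vanishing of the local factor in the Rallis inner product formula.

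For the ``further'' descent assertion, when $L^S(s,\sigma)$ has a pole at $s=1$ the regularized Siegel--Weil formula identifies the leading Laurent coefficient with a theta integral arising one step down in the Rallis tower, forcing the first occurrence of $\sigma$ in the tower to lie at $n-1$, and hence $\Theta_{n-1}(V_\sigma)\neq 0$. Part (2) is handled by the same scheme with the roles of $\Sp$ and $\OO$ exchanged in the doubling integral and with the twist by the quadratic character $\chi$ entering the standard L-function as dictated by the orthogonal side; the local non-vanishing argument is symmetric, again using the $e_v<1$ bound in place of temperedness.

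The main obstacle is expected to be the local analysis: verifying non-vanishing of the local doubling zeta integrals at the precise critical point in the boundary/equal-rank range under only the $e_v<1$ hypothesis. A secondary technical point is the careful handling of the regularized Siegel Eisenstein series at $s=1$ in the pole case, so that its residue is correctly matched via the Siegel--Weil formula with the theta integral produced at the lower step of the Rallis tower. Once these two ingredients are in place, the rest of the proof is essentially bookkeeping in the doubling method.
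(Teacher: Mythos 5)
Your proposal follows the direct Rallis inner product formula route: unfold the Petersson norm of the theta lift, convert the resulting doubled theta integral into an Eisenstein series via the Siegel--Weil formula, and then factor by the doubling method into $L^S(s,\sigma)$ (resp.\ $L^S(s,\pi,\chi)$) at $s=1$ times local zeta integrals. This is a genuinely different strategy from the paper's, which argues by contradiction along the Rallis tower. In the paper, the hypothesis $\Theta_{V_r}(V_\pi)=0$ (say, in the symplectic-to-orthogonal case) forces the lift $\Theta_{V_{r'}}(V_\pi)$ to be nonzero cuspidal for some $r'>r$; the standard $L$-function $L^S(s,\tau)$ of a constituent $\tau$ of that higher lift is then computed two ways --- by the unramified functoriality of the theta correspondence, which factors it as $L^S(s,\pi,\chi)$ times explicit partial zeta functions, and by the doubling method applied to matrix coefficients of $\tau$ itself --- and the resulting bounds on the order of the zero of $L^S(s,\tau)$ at $s=r'-r$ conflict. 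The role of the $e_v<1$ hypothesis is also different in the two arguments: in the paper it controls the Langlands quotient data of $\tau_v$ via Corollary~\ref{C:main_local}, so that Roberts' lemma forces the local doubling zeta integral to have a pole at $s=r'-r$ for a suitable choice of data; in your proposal it is invoked only for convergence of the local zeta integral at the critical point.

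Two gaps in your sketch deserve emphasis. First, in the boundary ranges $2n=\dim V_r$ (resp.\ $\dim V_r=2n+2$), the Rallis inner product formula rests on the Siegel--Weil identity at a point where the Kudla--Rallis first-term identity alone does not suffice; one needs the second-term identity, which is exactly the content of the cited preprint \cite{GT_Siegel} and was not available at the time. The paper's contradiction argument is engineered precisely to avoid this: it only uses \cite[Prop.~1.1.4]{Kudla-Rallis90} for the at-most-simple pole of the normalized doubling integral and \cite[Cor.~7.1.4]{Kudla-Rallis94} for the unramified $L$-factorization. Second, the passage from ``local theta lift nonzero'' to ``local doubling zeta integral nonzero at the central point'' is a nontrivial local statement that your proposal asserts without justification; the paper sidesteps it entirely by working with matrix coefficients of the higher-rank lift $\tau_v$ rather than of $\sigma_v$ or $\pi_v$. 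Finally, your treatment of the pole case by matching Laurent coefficients with a lower-rank theta integral is only heuristic; the paper instead derives a contradiction from the fact that the standard $L$-function of a cuspidal representation of an orthogonal group has at most a simple pole, which would be violated by $L^S(s,\tau)=\zeta^S(s)L^S(s,\pi,\chi)$ if $L^S(s,\pi,\chi)$ also had a pole at $s=1$.
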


Here we use the notion of ``boundedness" of each local representation. To introduce this notion, let us recall that for a classical reductive group $G$ over a (not necessarily non-archimedean) local field, each irreducible admissible representation $\pi$ is the Langlands quotient of the standard module $\delta_1\times\dots\times\delta_t\rtimes\tau:=\Ind_P^G\delta_1\otimes\dots\otimes\delta_t\otimes\tau$, where each $\delta_i$ is an essentially tempered representation of some $\GL(n_i)$ and $\tau$ is a tempered representation of a lower rank group of the same type as $G$. Then for each $i$, there exists $e(\delta_i)>0$ so that $\delta_i\otimes|\;|^{-e(\delta_i)}$ is tempered, and $e(\delta_1)>\cdots>e(\delta_t)>0$. Now we make the following definition.
\begin{defn}
Let $G$ be a classical reductive group over a (not necessarily non-archimedean) local field $F$. If $\pi$ is an irreducible admissible representation which is the Langlands quotient of the standard module $\delta_1\times\dots\times\delta_t\rtimes\tau$, then we say that $\pi$ is bounded by $e$ if $e(\delta_1)\leq e$. If $t=0$, \ie $\pi$ is tempered, we say that $\pi$ is bounded by $e$ for all $e\geq 0$.
\end{defn}

Let us mention that in the above theorem if $\sigma$ is tempered, then part (1) of the theorem is essentially the main theorem of \cite{Rob99-2} by Roberts, although he also assumes some other technical assumptions for the archimedean place. (See also \cite{Takeda} for those issues.) Indeed, our proof is a modification of the one by Roberts. What forced him to impose the temperedness assumption on his theorem is that he needed his own local result in \cite{Rob98} which required the representation be tempered. The main part of this paper is to replace his temperedness assumption in \cite{Rob98, Rob99-2} with the boundedness assumption as above. The key technical point is that the ``bound" of the local representation as defined above is closely related to the size of the ``local exponents" of the representation. We will show that Roberts' arguments in \cite{Rob99-2} also work under this weaker assumption. We also consider not only the lift from the orthogonal group to the symplectic one, but also from the symplectic group to the orthogonal group as in the theorem. Indeed, the method of Roberts works for the ``symplectic-to-orthogonal" case.

Next we apply this theorem, as Roberts did, to global theta lifting for groups of similitudes of small ranks. The first case we consider is the situation of \cite{Rob01}, in which he considered the theta lift from (any form of) $\GO(V)$ with $\dim V=4$ to $\GSp(4)$. In \cite{Rob01}, he needed the temperedness assumption because the same assumption is present in \cite{Rob99-2}. We can replace his temperedness assumption by our boundedness assumption, but for the group $\GO(V)$ with $\dim V=4$ one can check that, except certain degenerate cases, the boundedness assumption is always satisfied. To state our theorem, let us recall that each cuspidal automorphic representation of $\GO(V)$ is of the form $\sigma=(\tau,\delta)$ where $\tau$ is a cuspidal representation of $\GSO(V)$ and $\delta$ is what we call an ``extension index". Then if the discriminant $d=\disc V=1$, then $\GSO(V,\A)$ is of the form $(\D(\A)\times\D(\A))/\A^\times$ where $D$ is a (possibly split) quaternion algebra and so $\tau$ is identified with a cuspidal representation $\pi_1\otimes\pi_2$ of $\D(\A)\times\D(\A)$ such that $\pi_1$ and $\pi_2$ have a same central character. In this case, we write $\tau=\tau(\pi_1,\pi_2)$. If $d\neq 1$, then $\GSO(V,\A)$ is closely related with an inner form $\GSO(V_{D,E},\A)$ of $\GL(2,\A_E)$ where $E=F(\sqrt{d})$, and $\tau$ is naturally identified with a cuspidal representation $\pi$ of this inner form whose central character is of the form $\omega\circ N_F^E$, where $N_F^E$ is the norm map from $\A_E^\times$ to $\A$. In this case, we write $\tau=\tau(\pi,\omega)$. (See section \ref{S:similitude} for the detail.) Then we prove

\begin{thm}\label{T:main2}
Let $\sigma=(\tau,\delta)$ be an infinite dimensional cuspidal unitary automorphic representation of $\GO(V,\A)$. Then
\begin{enumerate}[(1)]
\item Assume that $\tau$ is NOT of the form $\tau=\tau(\pi_1,\pi_2)$ with one of $\pi_i$ finite dimensional. Then the global theta lift of $\sigma$ to $\GSp(4)$ does not vanish if and only if each local constituent $\sigma_v$ has a non-zero theta lift to $\GSp(4)$.
\item Assume $\tau=\tau(\pi_1,\pi_2)$ with one of $\pi_i$, say $\pi_2$, finite dimensional, and so we can write $\pi_1=\chi\circ N$ where $\chi$ is a Hecke character on $\A_F^{\times}$ and $N$ is the norm map on $\D(\A)$. Then the global theta lift of $\sigma$ to $\GSp(4)$ does not vanish if each local constituent $\sigma_v$ has a non-zero theta lift to $\GSp(4)$ and the (incomplete) $L$-function $L^S(s,\pi_1\otimes\chi)$ does not vanish at $s=\frac{1}{2}$.
\end{enumerate}
\end{thm}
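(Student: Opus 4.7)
The plan is to reduce the problem to the isometry setting and apply Theorem~\ref{T:main1}(1) with $n=2$ and $\dim V_r=4$. The ``only if'' direction of (1) is the standard easy implication, since the global theta kernel factors through local theta kernels and hence global non-vanishing forces local non-vanishing at every place. The content of the theorem lies in the ``if'' direction of both (1) and (2), which I would establish by verifying the two hypotheses of Theorem~\ref{T:main1}(1) --- boundedness of every local component and non-vanishing of $L^S(s,\sigma)$ at $s=1$ --- for the isometry restriction of $\sigma$.

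For the reduction, a cuspidal $\sigma=(\tau,\delta)$ on $\GO(V,\A)$ restricts to a finite direct sum of cuspidal representations of $\OO(V,\A)$, and following Roberts' method in \cite{Rob01} one shows that the global similitude theta lift of $\sigma$ to $\GSp(4,\A)$ is non-zero if and only if the global isometry theta lift to $\Sp(4,\A)$ of some (equivalently, every) one of these constituents is non-zero; an analogous equivalence holds locally. For boundedness, I would use the accidental isomorphism $\GSO(V,\A)\cong(\D(\A)\times\D(\A))/\A^\times$ when $d=1$, and the analogous identification of $\GSO(V,\A)$ with an inner form of $\GL(2,\A_E)$ when $d\neq 1$, so that $\tau$ is described by cuspidal representations of $\GL(2)$-type. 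By the Jacquet--Langlands correspondence and the Jacquet--Shalika bounds toward the Ramanujan conjecture for $\GL(2)$, every infinite-dimensional local component involved is bounded by some $e_v<\tfrac{1}{2}$; the unitary character $\chi\circ N$ appearing in case (2) is tempered at every place. In either case the bound on $\tau_v$ descends to a bound $e_v<1$ for the restrictions of $\sigma_v$ to $\OO(V,F_v)$, so that the hypotheses of Theorem~\ref{T:main1}(1) on the boundedness side are met.

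For the $L$-function hypothesis I would identify $L^S(s,\sigma)$ with a Rankin--Selberg product. When $d=1$, the standard representation of ${}^L\SO(4)\cong(\operatorname{SL}(2,\C)\times\operatorname{SL}(2,\C))/\{\pm 1\}$ is $\C^2\otimes\C^2$, which yields $L^S(s,\sigma)=L^S(s,\pi_1\times\pi_2)$; an analogous base-change or Asai-type identification holds in the $d\neq 1$ case. In case (1), the factors are cuspidal on $\GL(2)$ after Jacquet--Langlands, and hence $L^S(1,\sigma)\neq 0$ by the Jacquet--Shalika--Shahidi theorem on non-vanishing of Rankin--Selberg $L$-functions on $\Re(s)=1$, so Theorem~\ref{T:main1}(1) applies. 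In case (2), since the Langlands parameter of $\pi_2=\chi\circ N$ on $\GL(2)$ is $\chi|\cdot|^{1/2}\oplus\chi|\cdot|^{-1/2}$, one finds
$$L^S(s,\sigma)=L^S(s+\tfrac{1}{2},\pi_1\otimes\chi)\,L^S(s-\tfrac{1}{2},\pi_1\otimes\chi).$$
At $s=1$ the first factor is non-zero by absolute convergence, and the second is precisely $L^S(1/2,\pi_1\otimes\chi)$, whose non-vanishing is exactly the stated hypothesis. Hence $L^S(1,\sigma)\neq 0$ and Theorem~\ref{T:main1}(1) again applies.

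The main technical obstacle is the bookkeeping of the similitude--isometry reduction: proving that local similitude theta non-vanishing is equivalent to local isometry theta non-vanishing for some (hence every) restricted constituent, checking that the bound on $\tau_v$ transfers through restriction to a Langlands-quotient bound for $\sigma_v$ on $\OO(V,F_v)$, and handling the $d\neq 1$ subcase of the $L$-function identification carefully. Each of these points parallels or extends an argument already in Roberts' papers \cite{Rob99-2, Rob01}, which carry through once the temperedness hypothesis used there is replaced by the boundedness condition established earlier in the paper.
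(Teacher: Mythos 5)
Your proposal follows essentially the same route as the paper: restrict $\sigma$ to $\OO(V,\A)$, verify the boundedness hypothesis via the $\GL(2)$-type description of $\tau$ and unitarity bounds (the paper's preceding lemma plus Lemma~\ref{L:bound_restriction}), verify the $L$-function hypothesis via the Rankin--Selberg or Asai identification of $L^S(s,\sigma')$, pass local similitude non-vanishing to isometry non-vanishing via Roberts \cite{Rob96}, and then invoke Theorem~\ref{T:main1}(1). One minor imprecision: $\pi_2=\chi\circ N$ is not tempered at split places (its Langlands data on $\GL(2)$ has $e(\delta_1)=\tfrac{1}{2}$), but this does not affect the conclusion since the resulting bound on $\sigma_v$ is still $e_1+\tfrac{1}{2}<1$, exactly as in the paper's lemma.
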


Let us note that part (1) of the above theorem has been already proven by Gan and Ichino in their recent preprint \cite{Gan_Ichino} by an entirely different method. However in this paper we give a proof of this theorem in our method and demonstrate that the project of Roberts in his series of papers \cite{Rob98, Rob99, Rob99-2, Rob01} can be essentially completed.

The second case we consider is global theta lifting from $\GSp(4)$ to $\GO(V_D)$ where $V_D=D\oplus\H$ with $D$ possibly split, which implies a certain instance of Langlands functorial lift. Namely we will show

\begin{thm}\label{T:main3}
Let $\pi$ be a non-generic irreducible cuspidal automorphic representation of $\GSp(4,\A)$ which satisfies the following assumptions.
\begin{enumerate}[(1)]
\item The standard degree 5 $L$-function $L^S(s,\pi)$ does not vanish at $s=1$.
\item There is a (non-complex) place $v_0$ at which $\pi_{v_0}$ has a non-zero theta lit to $\GO(V_D)$ for both split and non-split $D$.
\item At each place $v$, $\pi_v$ is bounded by $e_v<1$.
\end{enumerate}
Then there is an automorphic representation $\Pi$ on $\GL(4,\A)$ which is the strong functorial lift of $\pi$ corresponding to the map
\[
    \hat{\GSp(4)}=\GSp(4,\C)\hookrightarrow\GL(4,\C)=\hat{\GL(4)}.
\]
\end{thm}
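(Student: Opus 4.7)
The plan is to realize $\Pi$ as the automorphic representation of $\GL(4,\A)$ obtained, via the exceptional isogeny between $\GSO(V_D)$ (with $D$ the split quaternion algebra, so that $V_D$ is the split six-dimensional quadratic space) and a quotient of $\GL(4)\times\Gm$, from the global theta lift of $\pi$ to $\GO(V_D,\A)$. Under this isogeny, the matching of Langlands dual groups identifies the $L$-group embedding $\GSp(4,\C)\hookrightarrow\GL(4,\C)$ with the map induced by the $(\GSp(4),\GO(V_D))$ theta correspondence, which is precisely what makes the descended $\Pi$ a candidate for the functorial lift in the statement.

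\textbf{Step 1: non-vanishing of the global theta lift.} I would apply the similitude analogue of Theorem~\ref{T:main1}(2) to show $\Theta_{V_D}(V_\pi)\neq 0$ for $D$ split. The three hypotheses needed are: (a) $L^S(s,\pi)\ne 0$ at $s=1$, given by (1); (b) the boundedness condition $e_v<1$ at all places, given by (3); and (c) non-vanishing of the local theta lift of $\pi_v$ to $\GO(V_D,F_v)$ at every $v$. Condition (c) is the delicate point: hypothesis (2) supplies it at the single place $v_0$, and at the remaining places I would invoke the conservation relation for the symplectic--orthogonal local dichotomy, which for $\Sp(4)$ reads $m^{+}(\pi_v)+m^{-}(\pi_v)=2n+2=6$ for the first-occurrence indices in the two Witt towers. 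This forces both $m^{\pm}(\pi_v)\leq 6=\dim V_D$, so by the Rallis tower property the local theta lift of $\pi_v$ at dimension $6$ is automatically nonzero in each tower, at every place.

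\textbf{Step 2: descent to $\GL(4)$ and matching of parameters.} Setting $\sigma=\Theta_{V_D}(V_\pi)$ nonzero on $\GO(V_D,\A)$, its restriction to $\GSO(V_D,\A)$ descends via the isogeny to an automorphic representation $\Pi$ of $\GL(4,\A)$. Cuspidality of $\sigma$ (and hence of $\Pi$) I would deduce from the Rallis tower property together with hypothesis (1), which rules out first occurrence in the split tower below dimension $6$ and therefore places $\sigma$ exactly at the first-occurrence rung. To show $\Pi$ is the \emph{strong} functorial lift, at each place $v$ I would identify $\Pi_v$ with the local theta lift of $\pi_v$ to $\GO(V_D,F_v)$ (pushed through the isogeny), and invoke compatibility of the local theta correspondence with the embedding $\hat{\GSp(4)}=\GSp(4,\C)\hookrightarrow\GL(4,\C)=\hat{\GL(4)}$: at unramified $v$ this is an explicit Satake-parameter computation, and at the remaining (non-complex, ramified, or archimedean) places the bound $e_v<1$ guarantees that both sides admit unambiguous Langlands-quotient descriptions that are intertwined by the local theta lift.

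\textbf{Main obstacle.} The principal technical hurdle is Step~1: carrying through Theorem~\ref{T:main1}(2) in the similitude setting under only the bounded-by-$e_v<1$ hypothesis, rather than Roberts' temperedness. Controlling the size of the local exponents and locating the zeros/poles of the local zeta integrals allowed by $e_v<1$ is where the new technical content of this paper is actually used. A secondary difficulty lies in Step~2, at the non-unramified places: non-genericity of $\pi$ precludes the usual converse-theorem route, so one must instead rely on an explicit match of Langlands data through Howe-correspondence formulas in the bounded (non-tempered) range, which is exactly what the hypothesis $e_v<1$ is crafted to make available.
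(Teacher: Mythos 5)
Your proposal contains a genuine gap at its crux, and also deviates from the paper's actual argument in a way that costs you correctness.

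\textbf{The conservation relation does not give you what you claim in Step 1.} The relation (in the paper's normalization, with $m, m^\#$ denoting \emph{half}-dimensions of the orthogonal spaces at first occurrence in the split and non-split towers) is $m(\pi_v)+m^\#(\pi_v)=6$, equivalently $\dim V + \dim V^\# = 12$ at first occurrence. This does \emph{not} force both first occurrences to lie at dimension $\leq 6$; it forces at least one. For instance, if $\pi_v$ first occurs at $\GO(2,2)$ (dimension $4$) in the split tower, then its first occurrence in the non-split tower is at dimension $8$, and the lift to the six-dimensional $\GO(V_{D})$ for $D_v$ non-split \emph{vanishes}; dually, if the first occurrence in the non-split tower is at $\GO(D)$ (dimension $4$), the split first occurrence is at dimension $8$ and the lift to $\GO(3,3)$ vanishes. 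So you cannot pin $D$ to be globally split and hope the local lifts are nonzero everywhere. Your sentence ``This forces both $m^\pm(\pi_v)\leq 6$'' is simply false, and the step that invokes the tower property to conclude local non-vanishing at dimension $6$ in \emph{both} towers at every place collapses.

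\textbf{What the paper does instead.} It lets $\Sigma_\pi$ be the (finite) set of places where $\pi_v$ has a nonzero lift to the anisotropic-kernel-$D$ side at dimension $6$; finiteness follows because at almost all $v$ the lift to $\GO(2,2)$ is nonzero, so conservation rules out the $\dim 6$ non-split lift there. It then chooses a \emph{global} quaternion algebra $D$ with $\Sigma_D=\Sigma_\pi$ if $|\Sigma_\pi|$ is even, or $\Sigma_D=\Sigma_\pi\setminus\{v_0\}$ if odd — this is exactly where hypothesis (2) (that $\pi_{v_0}$ lifts in both towers) is needed, to allow the parity adjustment at $v_0$. With this $D$, the local lift is nonzero at every $v$ and Proposition \ref{P:lift_to_GO} applies. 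Note $D$ is typically non-split globally; when it is, the descent from $\GSO(V_D)$ lands on $\GL_2(D)$, not $\GL(4)$, and one must invoke the global Jacquet--Langlands transfer of Badulescu(--Renard) to produce $\Pi$ on $\GL(4,\A)$. Furthermore, your Step 2 omits the case analysis: the global theta lift to $\GO(V_D,\A)$ need not be cuspidal. When it is not, the tower property drops one rung to $\GO(D,\A)$, where the cuspidal constituent restricts to $\pi_1\otimes\pi_2$ on $D^\times\times D^\times$, and $\Pi$ is the isobaric sum $\pi_1^{\JL}\boxplus\pi_2^{\JL}$, not a cuspidal representation. Your descent-via-exceptional-isogeny sketch only covers the cuspidal, $D$-split branch, which, as explained, you cannot force.
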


Here we have to impose unfortunate local assumptions, especially assumption (3) in the theorem. The issue of how much those assumptions are needed will be discussed in some detail after the proof of this theorem.

As our last application, we prove the following facts, which are well-known if the base field is a totally real number field.

\begin{thm}\label{T:main4}
Let $\pi$ be a cuspidal representation of $\GSp(4, \A)$ over a (not necessarily totally real) number field $F$. Assume the incomplete standard degree $5$ $L$-function $L^S(s,\pi)$ does not vanish at $s=1$ (a pole is allowed). Then if $\pi_v$ is generic for each $v$, then $\pi$ is globally generic.
\end{thm}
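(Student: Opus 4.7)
The plan is to realize the global genericity of $\pi$ via non-vanishing of the theta lift to the split $6$-dimensional orthogonal group, using the dual pair $(\Sp(4),\OO(V_r))$ with $V_r$ the split quadratic space of dimension $6$ (so $r=3$ and $\frac{1}{2}\dim V_r=3=n+1$ with $n=2$). First, I would descend from $\GSp(4)$ to an irreducible cuspidal constituent $\pi_0$ of $\pi|_{\Sp(4,\A)}$, which is again locally generic at every place, and whose partial standard $L$-function inherits the non-vanishing at $s=1$ from that of $\pi$; global genericity of $\pi$ and of $\pi_0$ are equivalent, so it suffices to prove the statement for $\pi_0$.

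Next I would verify the three hypotheses of Theorem \ref{T:main1}(2). The $L$-function hypothesis is given. For the boundedness hypothesis, I would invoke the fact that a generic unitary representation of a quasi-split classical group has its Langlands exponents bounded by $\frac{1}{2}$: at non-archimedean places this follows from the classification of generic unitary representations (Muic, Tadi\'c, Jantzen), and at archimedean places from Vogan's classification together with the local Langlands correspondence. In particular each $\pi_{0,v}$ is bounded by some $e_v\leq\frac{1}{2}<1$. For the local theta non-vanishing, the key local input is that any generic irreducible admissible representation of $\Sp(4,F_v)$ admits a non-zero theta lift to the split $\OO(V_r,F_v)$; this is the local ``first occurrence on the split tower'' for generic representations, obtained from Howe duality together with an explicit Jacquet module / Kudla filtration computation. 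With these three inputs in place, Theorem \ref{T:main1}(2) gives that the global theta lift $\Theta_{V_r}(V_{\pi_0})$ to $\OO(V_r,\A)$ is non-zero.

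Finally I would translate non-vanishing of the theta lift into global genericity of $\pi_0$. The argument is the standard unfolding: integrating the theta kernel against the Whittaker character $\psi_N$ of the maximal unipotent subgroup $N$ of $\OO(V_r)$ --- a split orthogonal group of the same rank as $\Sp(4)$ --- converts the $\OO(V_r)$-period of the theta lift into a Whittaker--Fourier coefficient of $\pi_0$, so that a non-zero theta lift forces a non-zero $\psi$-Whittaker functional on $\pi_0$. This yields global genericity of $\pi_0$ and hence of $\pi$.

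The main technical obstacle will be the simultaneous verification of the boundedness and local theta non-vanishing at the archimedean places, in particular at complex places, which is exactly the situation not covered by the totally real case in the literature. The whole point of the boundedness formulation in Theorem \ref{T:main1}, in contrast to the temperedness assumption of Roberts \cite{Rob99-2}, is to accommodate these archimedean subtleties for general number fields, so no further restriction on $F$ is required.
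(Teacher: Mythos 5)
Your outline matches the paper through the first two thirds: descend to an irreducible constituent on the isometry group, establish boundedness by $e_v<1$ via genericity plus unitarity (the paper proves this as Lemma~\ref{L:generic}, using the standard module conjecture and the table of $\GSp(4)$-representations), and apply Theorem~\ref{T:main1}(2) (packaged in the paper as Proposition~\ref{P:lift_to_GO} with $D$ split) to get a non-zero global theta lift to $\GO(3,3)$. However, there are two genuine gaps.

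The more serious one is your final step. You assert that non-vanishing of the theta lift to $\OO(V_r)$ plus a ``standard unfolding'' of the Whittaker coefficient of the theta kernel directly produces a non-zero $\psi$-Whittaker functional on $\pi_0$. This is not correct as stated: $\OO(3,3)$ has rank $3$ while $\Sp(4)$ has rank $2$, so they are not groups of the same rank, and the Whittaker coefficient of a theta lift from $\Sp(4)$ to $\OO(3,3)$ does \emph{not} unfold to a Whittaker--Fourier coefficient of $\pi_0$; the unfolding produces a different period. The argument the paper actually uses is a \emph{double} theta lift via the main theorem of Ginzburg--Rallis--Soudry \cite{GRS}: if the lift $\Theta_3(V_\pi)$ to $\GO(3,3)$ is cuspidal with constituent $\sigma$, then \cite{GRS} shows that the theta lift of $\sigma$ back to $\GSp(4,\A)$ is non-zero, cuspidal, and globally generic; since its local components agree with those of $\pi$ everywhere, it is isomorphic to $\pi$, so $\pi$ is globally generic. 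The paper also handles the complementary case where $\Theta_3(V_\pi)$ fails to be cuspidal, using the tower property to descend to $\GO(2,2)$ and run the same argument. You need this GRS-type input; non-vanishing of a theta lift alone does not yield genericity.

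The second gap is the local theta non-vanishing. You claim it follows uniformly from Howe duality plus a Kudla filtration computation, but the paper has to argue place by place, and the archimedean places are exactly where care is required. At non-archimedean $v$, the paper uses the conservation relation $m(\pi)+m^\#(\pi)=6$ from \cite{GT1,GT2} to rule out first occurrence on the non-split tower; at complex $v$ it checks parameters explicitly in \cite{Ad_Bar95}; at real $v$ it argues that a lift to $\OO(4,0)$ would force a (limit of) holomorphic discrete series, which is never generic, and then invokes \cite[Cor.~4.16]{Paul}. The claim that any generic representation of $\Sp(4,F_v)$ automatically lifts to $\OO(3,3)$ is true, but it is a theorem with real content at the archimedean places rather than a formal consequence of the Kudla filtration. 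Since the whole point of the theorem is to handle arbitrary number fields (including complex places), skipping this verification defeats the purpose.

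Your boundedness step is essentially fine, though the claimed uniform bound of $\tfrac12$ for generic unitary representations of $\Sp(4)$ is stronger than needed and not quite what the paper proves; Lemma~\ref{L:generic} only gives $e_v<1$, which suffices.
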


\begin{cor}\label{C:main}
The multiplicity one theorem holds for the generic representations for $\GSp(4)$ over a (not necessarily totally real) number field $F$.
\end{cor}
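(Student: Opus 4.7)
The plan is to reduce the corollary to the multiplicity-one theorem for \emph{globally} generic cuspidal representations of $\GSp(4)$, established (e.g.\ by Jiang--Soudry) via the automorphic descent from $\GL(4)$ and valid over an arbitrary number field. Once this external input is granted, the only remaining gap in passing to a general base field is to promote local genericness at every place to global genericness of each cuspidal realization, and this is exactly what Theorem~\ref{T:main4} supplies.

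Concretely, suppose two cuspidal subspaces $V,V'\subset L^2_{\mathrm{cusp}}(\GSp(4,F)\backslash\GSp(4,\A))$ both realize the same abstract generic cuspidal representation $\pi$, where ``generic'' is read as in Theorem~\ref{T:main4}, \ie\ $\pi_v$ is generic at every place $v$. Because $V\cong V'\cong\pi$, both realizations share the same local components, so each is locally generic at every $v$; the standing hypothesis $L^S(s,\pi)\neq 0$ at $s=1$ is a property of the abstract isomorphism class and hence common to both. Applying Theorem~\ref{T:main4} to $V$ and to $V'$ separately, each is globally generic, \ie\ each carries a non-vanishing Whittaker period realized by an actual cuspidal integral.

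With both realizations now known to be globally generic and abstractly isomorphic, the multiplicity-one theorem for globally generic forms on $\GSp(4)$ forces $V=V'$ inside $L^2_{\mathrm{cusp}}$, which is precisely the multiplicity-one statement of the corollary.

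The essential content lies in Theorem~\ref{T:main4}; the remainder is a citation. The only potential obstacle is external bookkeeping, namely verifying that no hidden totally real or archimedean hypothesis intrudes in the statement or proof of the multiplicity-one theorem for globally generic $\GSp(4)$. That proof, however, runs through multiplicity one on $\GL(4)$ (valid over any number field) together with local uniqueness of Whittaker models and a formal descent argument, so no signature assumption enters and the deduction from Theorem~\ref{T:main4} is immediate.
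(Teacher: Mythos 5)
Your proof is essentially the paper's own: the paper observes in one sentence that the only place \cite{J-S} use the totally-real hypothesis is via the theorem of \cite{KRS} that Theorem~\ref{T:main4} now supplies over any number field, and you unpack exactly this into a ``locally generic $\Rightarrow$ globally generic'' step plus a field-independent multiplicity-one argument for globally generic forms by descent from $\GL(4)$. The one small point to sharpen is that the non-vanishing $L^S(1,\pi)\neq 0$ needed to invoke Theorem~\ref{T:main4} is not a separate standing hypothesis of the corollary but is furnished for globally generic $\pi$ by Shahidi's non-vanishing result, after which (as you note) it is shared by every abstractly isomorphic cuspidal realization.
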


As we mentioned, those are essentially well-known and indeed the first one is Theorem 8.1 of \cite{KRS} and the second one is the main theorem of \cite{J-S}. However in both of their works, they assumed that the base field $F$ is a totally real number field. This is because this assumption is present in \cite{KRS}. However our method implies the same result as \cite[Theorem 8.1]{KRS} without the restriction on the base field.

\quad\\

The main structure of this paper is as follows. In Section 2, we set up our notations. In Section 3, we define the notion of the bounds of exponents of irreducible admissible representations and prove certain facts related to this notion, which are necessary for our main purposes. In Section 4, we prove the key fact on the local theta correspondence, which significantly improves the main theorem of \cite{Rob98}. Then in Section 5, we prove Theorem \ref{T:main1}, and in Section 6 we prove all the theorems related to $\GSp(4)$, namely Theorems \ref{T:main2}, \ref{T:main3} and \ref{T:main4}, and Corollary \ref{C:main}.

\quad\\

\begin{center}
Acknowledgements
\end{center}

The author would like to thank Wee Teck Gan for letting him know about his work with A. Ichino \cite{Gan_Ichino}. Thanks are also due to Freydoon Shahidi for answering several questions about generic representations, and Annegret Paul for her help with the archimedean theta correspondence. Finally, he would like to thank Brooks Roberts for showing his interests in this work. Indeed a part of the remark after the proof of Theorem \ref{T:main3} is based on a conversation the author had with B. Roberts.

\quad\\


\section{Notations and Preliminaries}\label{S:notation}


In this paper,  $F$ is a local or global field of $\ch F =0$, and if $F$ is a global field we denote the ring of adeles by $\A$. If
$E$ is a quadratic extension of $F$, then we denote by $N^E_F$ (or
simply by $N$) the norm map, and by $\chi_{E/F}$ the quadratic
character obtained by local or global class field theory.

We work with smooth representations instead of $K$-finite ones.
Namely if $G$ is a reductive group over a global filed $F$, then by
a (cuspidal) automorphic form we mean a smooth (cuspidal)
automorphic form on $G(\A_F)$ in the sense of \cite[Definition
2.3]{Cogdell04}.

For a reductive group $G$ over a local field, we denote by $\Irr(G)$ the class of (equivalence classes of) irreducible admissible representations of $G(F)$. For $\pi\in\Irr(G)$, we denote the space of representation by $V_{\pi}$, though we occasionally identify the space of $\pi$ with $\pi$ itself when there is no danger of confusion. Also we denote the contragredient by $\pi^\vee$. Now assume $G$ is a classical reductive group and $P$ is a standard parabolic subgroup whose Levi is isomorphic to $\GL(n_1)\times\cdots\times\GL(n_t)\times G'$, where $G'$ is a lower rank group of the same type as $G$. Let $\delta_i$ be an admissible representation of $\GL(n_i)$ and $\tau$ an admissible representation of $G'$. Following Tadic, we write
\[
    \delta_1\times\cdots\times\delta_t\rtimes\tau:=\Ind_{P}^{G}\delta_1\otimes\cdots\otimes\delta_t\otimes\tau
    \quad\text{(normalized induction)}.
\]
Here and elsewhere, induction is always normalized. If $\delta_i$ is essentially tempered, we denote by $e(\delta_i)$ the real number so that $\delta_i\otimes|\;|^{-e(\delta_i)}$ is tempered. If all the $\delta_i$ are essentially tempered with
\[
    e(\delta_1)>\cdots>e(\delta_t)>0,
\]
and $\tau$ is tempered, we call $\delta_1\times\cdots\times\delta_t\rtimes\tau$ a standard module.

Now for $\pi\in\Irr(G)$ and a parabolic subgroup $P$, we denote by $R_{P}(\pi)$ the normalized Jacquet module of $\pi$ along $P$, and by $\overline{R}_{P}(\pi)$ the normalized Jacquet module of $\pi$ along the opposite parabolic $\overline{P}$ of $P$. It is well-known that
\[
    \overline{R}_{P}(\pi)=R_{P}(\pi^\vee)^\vee.
\]

Assume $G=\GL(n)$. For $\pi\in\Irr(G)$, we denote the central character of $\pi$ by $\omega_{\pi}$. Also we denote by $P^{\GL}_{n_1,\dots,n_t}$ the standard parabolic of $\GL(n)$ whose Levi is $\GL(n_1)\times\cdots\times\GL(n_t)$.

Let $V$ be an even dimensional symmetric space defined over a field $F$ of even dimension $m$ equipped with a symmetric bilinear form. If $V$ is defined over a local or global field $F$ of $\ch F= 0$, then we denote by $\disc V\in F^{\times}/F^{{\times}^2}$ the discriminant of $V$ when $V$ is viewed as a quadratic form. We let $\chi_V:F^{\times}\ra\{\pm1\}$ be the quadratic character of $V$, namely $\chi_V(a)=(a,(-1)^{\frac{m(m-1)}{2}}\disc V)_F$ for $a\in F^{\times}$, where $(\ ,\ )_F$ is the Hilbert symbol of $F$. Sometimes we omit $V$ and simply write $\chi$.

Also assume the Witt index of $V$ is $r$ and so $V=V_a\oplus\H^r$ where $V_a$ is anisotropic and $\H$ is the hyperbolic plane. If $\dim V=m$, we write $\dim V_a=m_a$. Now we fix a Witt decomposition
\[
    V_r=V_r'\oplus V_a\oplus V_r''
\]
with bases $\{v_1,\dots,v_r\}$ for $V_r'$ and $\{v'_1,\dots,v'_r\}$ for $V_r''$ satisfying $(v_i,v_j)=(v'_i,v'_j)=0$ and $(v_i,v_j')=\delta_{ij}$. Then we denote by $P_k$ (or sometimes $Q_k$) the parabolic subgroup of $\OO(V_r)$ that fixes $\{v_1,\dots,v_k\}$, which is a standard maximal parabolic of $\OO(V_k)$. Also if $V_r$ is split, we sometimes write $\OO(V_r)=\OO(r,r)$. Similarly for $W$ a symplectic space of rank $n$, we fix a polarization
\[
    W=W'\oplus W''
\]
with fixed symplectic bases $\{e_1,\dots,e_n\}$ for $W'$ and $\{e'_1,\dots,e'_n\}$ for $W''$. Then we denote by $P_k$ the parabolic subgroup of $\Sp(W)=\Sp(2n)$ that fixes $\{e_1,\dots,e_k\}$, which is a standard maximal parabolic of $\Sp(2n)$.

We denote the (local or global) Weil representation for
$\OO(V)\times \Sp(2n)$ by $\omega_{V,n}$ or simply by $\omega$ when
$V$ and $n$ are clear from the context. If $F$ is an archimedean local field, then the Weil representation is a smooth representation
$\omega_{V, n}$ of the group $\OO(V)\times\Sp(2n)$ of moderate growth
in the sense of \cite{Casselman89}. We say that
$\sigma\in\Irr(\OO(V))$ and $\pi\in\Irr(\Sp(2n))$ correspond, or
$\sigma$ corresponds to $\pi$ if there is a non-zero homomorphism of
Harish-Chandra modules from the underlining Harish-Chandra module of $\omega_{V, n}$ to the underlining Harish-Chandra module of
$\sigma\otimes\pi$. It is known that for each non-zero $\pi$, if a non-zero $\sigma$ corresponds to it, then $\sigma$ is unique up to infinitesimal equivalence, and hence the canonical completion of the underlining Harish-Chandra module of $\sigma$ is unique. We denote this canonical completion by $\theta_V(\pi)$. Similarly, we define $\theta_n(\sigma)$. Next assume $F$ is non-archimedean. We say that
$\sigma\in\Irr(\OO(V))$ and $\pi\in\Irr(\Sp(2n))$ correspond, or
$\sigma$ corresponds to $\pi$ if there is a non-zero
$\OO(V)\times\Sp(2n)$ homomorphism from $\omega_{V,n}$ to
$\sigma\otimes\pi$, \ie $\Hom_{\OO(V)\times\Sp(2n)}(\omega_{V,n},
\sigma\otimes\pi)\neq 0$. For an irreducible cuspidal automorphic representation $\sigma$ of $\OO(V,\A)$,
we denote by $\Theta_n(V_{\sigma})$ or $\Theta_n(\sigma)$ (when $V_{\sigma}$ is clear from the context) the space of the global theta lifts of
$\sigma$ to $\Sp(2n,\A)$, \ie the space generated by the forms of
the form $\theta(f,\phi)$ for $f\in V_{\sigma}$ and
$\phi\in\Sw(V(\A_F)^n)$ which are define by
\[
    \theta(f;\varphi)(g)
    =\int_{\OO(V, F)\backslash \OO(V,\A_F)}\Big(\sum_{x\in V(F)^n}\omega(h,g)\varphi(x)\Big)
f(h) \, dh
\]
for each $g\in\Sp(2n,\A)$. Similarly for a cuspidal automorphic representation $\pi$ of $\Sp(2n,\A)$, we denote by $\Theta_{V}(V_{\pi})$ or $\Theta_V(\pi)$ the space of the global theta lifts of $\pi$ to $\OO(V,\A)$.

\quad\\


\section{Bounds of exponents}\label{S:exponent}


In this section we prove certain facts on exponents of admissible representations, which are necessary for proving our theorems on theta lifting. First recall the following definition we made in Introduction. Namely if $\pi\in\Irr(G)$ with $G$ a classical group is the Langlands quotient of the standard module $\delta_1\times\dots\times\delta_t\rtimes\tau$, then we say that $\pi$ is bounded by $e\in\R$ if $e\geq e(\delta_1)$. If $t=0$, \ie $\pi$ is tempered, we say that $\pi$ is bounded by all $e\geq 0$. Clearly if $\pi$ is bounded by some $e$, then it is bounded by all $e'\geq e$. Note that although this notion applies to archimedean $F$, throughout this section, we assume that our field $F$ is non-archimedean.

This notion of boundedness can be shown to be closely related to the notion of exponents of $\pi$. So let us recall the notion of exponent of a representation $\pi$. Let $P$ be a parabolic subgroup of $G$ whose Levi is $M$ and $A$ the split maximal torus whose centralizer is $M$. Then an exponent $\omega$ of $\pi$ along $P$ is the restriction to $A$ of the central character of a constituent of the (normalized) Jacquet module $\overline{R}_{P}(\pi)$. For example, if $G=\Sp(2n)$ and $P=P_k$ is the standard maximal parabolic whose Levi is $\GL(k)\times\Sp(2n-2k)$, then $A$ is the center of $\GL(k)$, and so if $\pi_1\otimes\pi_2\in\Irr(\GL(k)\times\Sp(2n-2k))$ is a non-zero constituent of $\overline{R}_{P_k}(\pi)$, then the central character $\omega_{\pi_1}$ of $\pi_1$ is an exponent along $P_k$. (Note that occasionally in the literature, an exponent of $\pi$ is defined as the real number $s$ when one writes $|\omega|=|\cdot|^s$, \ie literally the ``exponent" of $\omega$. But in this paper, we simply call the character $\omega$ an exponent. We essentially follow \cite{Silberger}.) Now in this paper, we only consider an exponent along a maximal parabolic subgroup, and hence by an exponent we always mean an exponent along a maximal parabolic. Then let us make the following definition.

\begin{defn}
Let $\pi$ be an admissible representation of $\Sp(2n)$ (resp. of $\OO(V_r)$). We say that the exponents of $\pi$ are bounded by $e\geq 0$, if for all exponents $\omega$ along the standard maximal parabolic subgroup $P_k$  for all $1\leq k\leq n$, we have
\[
    |\omega(a)|\leq|a|^{ke}
\]
for $a\in F^{\times}$ with $|a|>1$. Also if $\overline{R}_{P_k}(\pi)=0$ for all $P_k$, \ie $\pi$ is supercuspidal, we say that the exponents are bounded by any $e\geq 0$.
\end{defn}

Let us note that for each admissible representation $\pi$ of finite length, there are only finitely many exponents along each parabolic, and hence for each such $\pi$ the exponents are bounded by some non-negative number. Also clearly if the exponents of $\pi$ are bounded by some $e$, then they are also bounded by all $e'\geq e$.

For tempered representations, the exponents are bounded by $0$. Namely,
\begin{prop}\label{P:Silberger}
\begin{enumerate}[(a)]
\item Let $\pi$ be a non-supercuspidal admissible tempered representation of $\Sp(2n)$ (resp. $\OO(V_r)$) over $F$ which is of finite length. Then for all standard maximal parabolic $P_k$ whose Levi is $\GL(k)\times\Sp(2n-2k)$ (resp. $\GL(k)\times\OO(V_{r-k})$) and every exponent $\omega$ along $P_k$, we have
\[
    |\omega(a)|\leq 1
\]
for $a\in F^{\times}$ with $|a|>1$. In particular, the exponents of $\pi$ are bounded by $0$.
\item Let $\sigma$ be a non-supercuspidal admissible representation of $\GL(n)$ over $F$ which is essentially tempered and of finite length. Then for all standard maximal parabolic $P^{\GL}_{k,n-k}$ whose Levi is $\GL(k)\times\GL(n-k)$, let $\sigma_1\otimes\sigma_2\in\Irr(\GL(k)\times\GL(n-k))$ be a non-zero constituent of $\overline{R}_{P^{\GL}_{k, n-k}}$. Then
\[
    \left|\frac{\omega_{\sigma_1}(a_1)}{\omega_{\sigma_2}(a_2)}\right|\leq \left|\frac{{a_1}^k}{{a_2}^{n-k}}\right|^{e(\sigma)}
\]
for $a_1,a_2\in F^{\times}$ with $\left|\frac{a_1}{a_2}\right|>1$, where recall that $e(\sigma)$ is the real number such that $\sigma\otimes|\;|^{-e(\sigma)}$ is tempered.
\end{enumerate}
\end{prop}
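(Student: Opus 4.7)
The plan is to deduce both parts from Casselman's criterion for temperedness (as in \cite{Silberger}), together with, for part (b), a twist by $|\det|^{e(\sigma)}$ to reduce the essentially tempered case to the tempered case.

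For part (a), I would recall Casselman's criterion in the form: an irreducible admissible $\pi$ of $G(F)$ with unitary central character is tempered if and only if, for every standard parabolic $P=MN$ and every irreducible constituent of the normalized Jacquet module $\overline{R}_P(\pi)$, the restriction to the split center $A_M$ of its central character $\chi$ satisfies $|\chi(a)|\leq 1$ on the closure of the positive Weyl chamber of $A_M$ modulo $A_G$. A non-supercuspidal tempered $\pi$ of finite length can then be handled one irreducible constituent at a time. When $G=\Sp(2n)$ or $\OO(V_r)$, the center of $G$ is $\{\pm I\}$, which is anisotropic, so $A_G=1$; and for the maximal parabolic $P_k$ with Levi $M=\GL(k)\times G'$ (where $G'=\Sp(2n-2k)$ or $\OO(V_{r-k})$), the split center $A_M$ is exactly the center of the $\GL(k)$-factor, namely $\{aI_k:a\in F^\times\}\cong F^\times$, because $G'$ has anisotropic split center as well. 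Any positive root of $(A_M,N_{P_k})$ restricted to $A_M$ is a positive power of $a$, so the positive chamber in $A_M$ is $\{a\in F^\times:|a|\geq 1\}$, and Casselman's criterion reads $|\omega(a)|\leq 1$ for $|a|>1$. This is exactly the bound asserted in part (a).

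For part (b), I would write $\sigma=\sigma_0\otimes|\det|^{e(\sigma)}$ with $\sigma_0$ tempered, and exploit the compatibility of normalized Jacquet modules with character twists: if $\sigma_1\otimes\sigma_2$ is an irreducible constituent of $\overline{R}_{P^{\GL}_{k,n-k}}(\sigma)$, then $\sigma_1^0\otimes\sigma_2^0:=(\sigma_1\otimes|\det|^{-e(\sigma)})\otimes(\sigma_2\otimes|\det|^{-e(\sigma)})$ is an irreducible constituent of $\overline{R}_{P^{\GL}_{k,n-k}}(\sigma_0)$. Applying Casselman's criterion to the tempered $\sigma_0$ of $\GL(n)$, now bearing in mind that $A_G=F^\times$ (the scalar matrices) sits nontrivially inside $A_M=F^\times\times F^\times$, bounds the central character of $\sigma_1^0\otimes\sigma_2^0$ on the positive chamber $|a_1/a_2|>1$. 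Translating back via the identities $\omega_{\sigma_i}(a_i)=\omega_{\sigma_i^0}(a_i)\cdot|a_i|^{n_ie(\sigma)}$ with $n_1=k$ and $n_2=n-k$, the asserted inequality falls out by a direct substitution.

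The main source of friction, and where I would be most careful, is the bookkeeping of conventions: which direction of the inequality corresponds to $R_P$ versus $\overline{R}_P$, how the positive Weyl chamber modulo $A_G$ is expressed in the $\GL$ case, and how the normalization factor $\delta_P^{1/2}$ absorbs into the Jacquet functor so that one ends up with the clean bound stated. None of these issues is individually hard, but collectively they determine whether the final inequality comes out with the correct exponents on both sides.
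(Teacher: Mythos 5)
Your strategy---unpack Silberger's criterion (Casselman's criterion for temperedness), handle the orthogonal/symplectic case directly, and for $\GL(n)$ reduce the essentially tempered case to the tempered case by twisting---is exactly the route the paper takes, since its entire proof is the one-line citation of \cite[Corollary 2.6]{Silberger}. Part (a) is fine: for $\Sp(2n)$ and $\OO(V_r)$ the split center of $G$ is trivial, $A_M\cong F^\times$ with positive chamber $|a|\geq 1$, and the bound $|\omega(a)|\leq 1$ drops out as you describe.

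The gap is in part (b), at precisely the ``bookkeeping'' step you flag but do not carry out. The exponent of $\sigma_1^0\otimes\sigma_2^0$ on $A_M=F^\times\times F^\times$ is the \emph{product} $(a_1,a_2)\mapsto\omega_{\sigma_1^0}(a_1)\omega_{\sigma_2^0}(a_2)$, and Casselman's criterion for the tempered $\sigma_0$ bounds this product by $1$ on the chamber $|a_1/a_2|\geq 1$. Twisting back, what ``falls out by direct substitution'' is the \emph{product} bound
\[
  |\omega_{\sigma_1}(a_1)\,\omega_{\sigma_2}(a_2)|\leq |a_1^k a_2^{n-k}|^{e(\sigma)}\qquad\text{on }|a_1/a_2|>1,
\]
not the \emph{quotient} bound displayed in the statement. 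To pass from the product to the quotient you must additionally invoke the unitarity of the central character of $\sigma_0$ on $A_G$ (which forces the real exponents of $\omega_{\sigma_1^0}$ and $\omega_{\sigma_2^0}$ to be opposite), and after that manipulation the natural region of validity becomes $|a_1a_2|>1$, not $|a_1/a_2|>1$. Indeed, for $\sigma=\mathrm{St}_2$ one has $\overline{R}_B(\sigma)=|\cdot|^{-1/2}\otimes|\cdot|^{1/2}$ and $e(\sigma)=0$, so the displayed quotient inequality reads $|a_1a_2|^{-1/2}\leq 1$, which fails on $|a_1/a_2|>1$ (take $a_1=1$, $a_2=\varpi$); and the paper's own use of the proposition in the subsequent lemma plugs in $a_1=1$, $a_2=a$ with $|a|>1$, which violates $|a_1/a_2|>1$ but satisfies $|a_1a_2|>1$. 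So the stated inequality is almost certainly a misprint, and a careful execution of your substitution would have surfaced this: you should state explicitly what the twist-and-Casselman argument actually yields and reconcile it with the displayed form, rather than asserting it matches.
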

\begin{proof}
This easily follows from \cite[Corollary 2.6]{Silberger}.
\end{proof}

Note that by combining this proposition with our convention that for a supercuspidal representation the exponents are bounded by any $e\geq 0$, we can simply say that for a tempered representation the exponents are bounded by any $e\geq 0$.

Now the following gives the relation between the bound of $\pi$ and its exponents, and this is the key proposition for our computation of local theta correspondences.

\begin{prop}\label{P:exponent}
Let $\pi$ be an irreducible admissible representation of $\Sp(2n)$ (resp. $\OO(V_r)$) which is a non-zero constituent of the standard module
\[
    \delta_1\times\cdots\times\delta_t\rtimes\tau.
\]
Then the exponents of $\pi$ are bounded by $e(\delta_1)$. In particular, if $\pi$ is the Langlands quotient and so is bounded by $e(\delta_1)$, then the exponents of $\pi$ are bounded by $e(\delta_1)$.
\end{prop}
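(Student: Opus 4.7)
The plan is to use the exactness of the Jacquet functor $\overline{R}_{P_k}$, together with Tadic's formula (the classical-group analogue of the Bernstein--Zelevinsky geometric lemma), to write $\overline{R}_{P_k}(\delta_1\times\cdots\times\delta_t\rtimes\tau)$ as a sum of explicitly understood induced representations, and then bound the $\GL(k)$-central character of every irreducible subquotient by $|a|^{k e(\delta_1)}$ using Proposition \ref{P:Silberger}. Since $\pi$ is an irreducible subquotient of the standard module and $\overline{R}_{P_k}$ is exact, every exponent of $\pi$ along $P_k$ is the $\GL(k)$-central character of some irreducible subquotient of $\overline{R}_{P_k}$ of the standard module, so it suffices to establish the bound for the latter.

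By Tadic's formula, $\overline{R}_{P_k}(\delta_1\times\cdots\times\delta_t\rtimes\tau)$ is a sum of induced representations labeled as follows: for each $i$, choose a three-step subquotient $\alpha_i\otimes\beta_i\otimes\gamma_i$ of a Jacquet module of $\delta_i$ along a standard parabolic of $\GL(n_i)$ (with $\dim\alpha_i+\dim\beta_i+\dim\gamma_i=n_i$), and choose a two-step subquotient $\mu\otimes\tau'$ of a Jacquet module of $\tau$ along a maximal parabolic of the same type as the small classical group. On the $\GL(k)$-side the inducing datum is a product of the $\alpha_i$'s, of the contragredients $\gamma_i^\vee$ twisted by an appropriate positive power of $|\,\cdot\,|$ coming from the long Weyl element, and of $\mu$; on the smaller classical-group side, one gets a product of the $\beta_i$'s together with $\tau'$. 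The total $\GL$-rank transferred onto the $\GL(k)$-side is exactly $k$, and the central character of the $\GL(k)$-part of any irreducible subquotient is a corresponding product of central characters.

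To bound this product at a central element of parameter $|a|>1$, I would apply Proposition \ref{P:Silberger}(b) to $\delta_i$ for each $\alpha_i$ and each $\gamma_i$: this gives bounds of the form $|a|^{(\dim\alpha_i)e(\delta_i)}$ and $|a|^{(\dim\gamma_i)e(\delta_i)}$ respectively, where the Tadic twist on $\gamma_i^\vee$ exactly cancels the $-e(\delta_i)$ coming from the contragredient. The contribution from $\mu$ is bounded by $1$ because $\tau$ is tempered, by Proposition \ref{P:Silberger}(a). Since $e(\delta_i)\le e(\delta_1)$ for all $i$ and the total $\GL$-rank on the $\GL(k)$-side is $k$, summing exponents gives the desired bound $|a|^{k e(\delta_1)}$. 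The main obstacle is the combinatorial bookkeeping in Tadic's formula, and in particular the verification that the positive Tadic twist on the contragredient pieces $\gamma_i^\vee$ has precisely the right magnitude to convert the a priori ``wrong-sign'' $-e(\delta_i)$ into a correctly-signed $+e(\delta_i)$ contribution; once this is in place, the result follows by direct summation.
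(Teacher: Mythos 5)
Your plan --- compute $\overline{R}_{P_k}$ of the standard module via the geometric lemma of Tadic/Ban and then bound the $\GL(k)$-central character of each constituent using Proposition~\ref{P:Silberger} --- is the same fundamental approach the paper takes. The difference is organizational: the paper does \emph{not} apply the multi-block Tadic formula to $\delta_1\times\cdots\times\delta_t\rtimes\tau$ all at once. Instead it isolates a lemma for a \emph{single} block $\sigma\rtimes\tau$ (with $\sigma$ essentially tempered and $\tau$ of finite length, not necessarily tempered, whose constituents have exponents bounded by $e(\sigma)$) and then proves the Proposition by an induction on $t$, peeling off one $\delta_i$ at a time via $\delta_1\times\cdots\times\delta_{t+1}\rtimes\tau=\delta_1\rtimes(\delta_2\times\cdots\times\delta_{t+1}\rtimes\tau)$. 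This keeps the Tadic computation to a single three-way partition of one $\GL(l)$ block plus a two-way partition of the classical-group factor, which is exactly what \cite[Theorem~5.4]{Tadic95} hands you directly; your version would require the general multi-factor geometric lemma, which is where the ``combinatorial bookkeeping'' you flag as the main obstacle lives.

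There is also a conceptual imprecision in your handling of the contragredient piece $\gamma_i^\vee$ that you should be aware of, even though it does not derail the argument. You worry that the ``wrong-sign'' contribution $-e(\delta_i)$ must be exactly cancelled by a normalizing twist. But in the normalized setting used throughout the paper, no such cancellation is needed or even available (the modular twist is a fixed power of $|\det|$ depending only on block sizes, hence it cannot depend on $e(\delta_i)$). What the paper's computation shows is that the contragredient piece $\sigma_3$ contributes $|\omega_{\sigma_3}(a)|\le|a|^{-l_3 e(\sigma)}$, a \emph{negative} exponent for $|a|>1$, so the total comes out to $(l_1-l_3+m_1)e(\sigma)=(k-2l_3)e(\sigma)\le ke(\sigma)$. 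The ``wrong sign'' is actually in your favor; there is nothing to cancel. If you carry out the bookkeeping honestly you will get a bound at least as good as $ke(\delta_1)$, so the proof closes either way --- but as written, the mechanism you describe for why the bound holds is not the correct one, and the step you explicitly leave unverified is precisely where this would need to be sorted out.
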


To prove the proposition, we need the following lemma.

\begin{lemma}
Let $\pi$ be an irreducible admissible representation of $\Sp(2n)$ (resp. $\OO(V_r)$) which is a non-zero constituent of
\[
    \sigma\rtimes\tau,
\]
where $\sigma$ is an essentially tempered representation of $\GL(l)$ with $e(\sigma)>0$, and $\tau$ is an admissible representation of $\Sp(2n-2l)$ (resp. $\OO(V_{r-l})$) of finite length in which the exponents of every constituent are bounded by $e(\sigma)$. Then the exponents of $\pi$ are bounded by $e(\sigma)$.
\end{lemma}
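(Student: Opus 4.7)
The plan is to compute $\overline{R}_{P_k}(\sigma\rtimes\tau)$ using the geometric lemma, in the explicit form worked out by Tadic for classical groups, and then bound the central character of the $\GL(k)$-piece of every irreducible constituent. Since Jacquet is exact, every constituent of $\overline{R}_{P_k}(\pi)$ is a constituent of $\overline{R}_{P_k}(\sigma\rtimes\tau)$, so this will suffice, for each $1\leq k\leq n$ (respectively $1\leq k\leq r$).

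The first step is to recall the shape of Tadic's formula. An irreducible constituent of the semisimplification of $\overline{R}_{P_k}(\sigma\rtimes\tau)$ is parametrized by a partition $l_1+l_2+l_3=l$ and a non-negative integer $k'$ satisfying $l_1+l_3+k'=k$, together with an irreducible constituent $\sigma_1\otimes\sigma_2\otimes\sigma_3$ of the triple Jacquet of $\sigma$ along $P^{\GL}_{l_1,l_2,l_3}$, and an irreducible constituent $\tau'\otimes\tau''$ of the Jacquet of $\tau$ along the standard maximal parabolic with $\GL(k')$ factor. The resulting constituent has the form $\rho_1\otimes\rho_2$, where $\rho_1$ is a constituent of $\tilde{\sigma}_3\times\sigma_1\times\tau'$ and $\rho_2$ is a constituent of $\sigma_2\rtimes\tau''$; the contragredient $\tilde{\sigma}_3$ arises because, in order to cross $\GL(l_3)$ to the opposite side of the classical group, the relevant Weyl element acts by $g\mapsto g^{-T}$. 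Because the modular character of a parabolic of $\GL$ is trivial on scalar matrices, the central character of $\rho_1$ evaluated on $aI_k$ is
\[
\omega_{\rho_1}(a) \;=\; \omega_{\sigma_1}(a)\,\omega_{\sigma_3}(a)^{-1}\,\omega_{\tau'}(a).
\]

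The next step is to bound each of the three factors for $|a|>1$. Applying Proposition \ref{P:Silberger}(b) to the two-step Jacquet of $\sigma$ that isolates $\sigma_i$ (with the auxiliary variable set to $1$) yields $|\omega_{\sigma_i}(a)|\leq|a|^{l_i\,e(\sigma)}$ for $i=1,2,3$. On the other hand, essential temperedness of $\sigma$ gives $|\omega_\sigma(a)|=|a|^{l\,e(\sigma)}$ exactly, and from $\omega_{\sigma_1}(a)\,\omega_{\sigma_2}(a)\,\omega_{\sigma_3}(a)=\omega_\sigma(a)$ on scalar matrices the three upper bounds must all be equalities; in particular $|\omega_{\sigma_3}(a)^{-1}|=|a|^{-l_3\,e(\sigma)}$. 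Combined with the hypothesis on $\tau$, which supplies $|\omega_{\tau'}(a)|\leq|a|^{k'\,e(\sigma)}$, these estimates give
\[
|\omega_{\rho_1}(a)|\;\leq\;|a|^{(l_1-l_3+k')\,e(\sigma)}\;=\;|a|^{(k-2l_3)\,e(\sigma)}\;\leq\;|a|^{k\,e(\sigma)},
\]
as desired, since $l_3\geq 0$, $e(\sigma)>0$, and $|a|>1$.

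I expect the main obstacle to be bookkeeping within Tadic's formula for both the symplectic and the orthogonal cases --- in particular, verifying that $\sigma_3$ enters in contragredient form and that the three scalar central-character contributions combine without any residual modular correction. A secondary, routine matter is that the geometric lemma produces a filtration of $\overline{R}_{P_k}(\sigma\rtimes\tau)$ whose semisimplification is the above sum, so one must invoke the standard fact that every irreducible constituent of the filtered module is a constituent of one of its subquotients.
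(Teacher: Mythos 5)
Your overall strategy matches the paper's: both invoke Tadic's geometric lemma for the Jacquet module of an induced representation, both track the central character of the $\GL(k)$-factor, and both arrive at the bound $|\omega_{\rho_1}(a)| \le |a|^{(k-2l_3)e(\sigma)} \le |a|^{ke(\sigma)}$. The paper passes through the contragredient $\overline{R}_{P_k}(\pi)=R_{P_k}(\pi^\vee)^\vee$, while you compute $\overline{R}_{P_k}$ directly; that difference is cosmetic.

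However, there is a genuine gap in the middle of your argument. You assert that Silberger's bound gives $|\omega_{\sigma_i}(a)|\le|a|^{l_i e(\sigma)}$ for $|a|>1$ simultaneously for $i=1,2,3$, and that multiplicativity $\omega_{\sigma_1}\omega_{\sigma_2}\omega_{\sigma_3}=\omega_\sigma$ then forces all three to be equalities. Neither claim is correct. For an essentially tempered $\sigma$ on $\GL(l)$, the Casselman--Silberger estimate applied to $\overline{R}_{P^{\GL}_{l_1,\dots}}(\sigma)$ bounds the \emph{first} block from above and the \emph{last} block from below: writing $|\omega_{\sigma_i}(a)|=|a|^{s_i}$, one has $s_1\le l_1 e(\sigma)$ but $s_3\ge l_3 e(\sigma)$, the opposite sign. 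A concrete counterexample to your claim: take $\sigma=\mathrm{St}_3$ on $\GL(3)$, so $e(\sigma)=0$, and $\overline{R}_{B}(\mathrm{St}_3)=|\cdot|^{-1}\otimes|\cdot|^{0}\otimes|\cdot|$; here $s_3=1>0=l_3 e(\sigma)$. In particular the three exponents are $(-1,0,1)$, not $(0,0,0)$, so the ``equality'' conclusion fails as well. As a result, the step deriving $|\omega_{\sigma_3}(a)^{-1}|=|a|^{-l_3 e(\sigma)}$ is unsupported.

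What saves the final inequality is that you do not actually need an upper bound or an equality for $s_3$: you need $-s_3\le -l_3 e(\sigma)$, which is precisely the \emph{lower} bound $s_3\ge l_3 e(\sigma)$ on the last block. This is the estimate the paper uses (phrased there as $|\omega_{\sigma_3}(a)|\le|a|^{-l_3 e(\sigma)}$ for the constituent of the Jacquet module of $\sigma^\vee$). So the arithmetic at the end is right, but the justification you give for the $\sigma_3$ term is wrong and must be replaced by the one-sided lower bound on the last block coming from Casselman--Silberger. The bounds on $\sigma_1$ and on $\tau'$ are fine as you wrote them.
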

\begin{proof}
Let us first treat the case for the symplectic group $\Sp(2n)$. Let $P_k$ be the standard maximal parabolic with the Levi $\GL(k)\times\Sp(2n-2k)$. Recall $\overline{R}_{P_k}(\pi)=R_{P_k}(\pi^\vee)^\vee$, and let $\pi_1\otimes\pi_2$ be a constituent of $\overline{R}_{P_k}(\pi)$. We first compute the Jacquet module of
\[
    \Ind_{P_{l}}^{\Sp(2n)}(\sigma\otimes\tau)^\vee=\Ind_{P_{l}}^{\Sp(2n)}(\sigma^\vee\otimes\tau^\vee)
\]
along $P_k$. For this, let $l=l_1+l_2+l_3$ be a partition of $l$ and $n-l=m_1+m_2$ a partition of $n-l$ such that
\[
    l_3+l_1+m_1=k,
\]
where we allow some of $l_i$ and $m_i$ to be zero. By \cite[Theorem 5.4]{Tadic95} each constituent of the Jacquet module of $\Ind_{P_{l}}^{\Sp(2n)}(\sigma^\vee\otimes\tau^\vee)$ along $P_k$ is a constituent of a representation of the form
\[
    \Ind_{P^{\GL}_{l_3,l_1,m_1}}^{\GL(k)}(\sigma_3^\vee\otimes\sigma_1\otimes\tau_1)\otimes
    \Ind_{P'_{l_2}}^{\Sp(2n-2k)}(\sigma_2\otimes\tau_2),
\]
where $\sigma_1\otimes\sigma_2\otimes\sigma_3$ is a constituent of the Jacquet module of $\sigma^\vee$ along the standard parabolic $P_{l_1,l_2,l_3}^{\GL}$ whose Levi is $\GL(l_1)\times\GL(l_2)\times\GL(l_3)$, and $\tau_1\otimes\tau_2$ is a constituent of the Jacquet module of $\tau^\vee$ along the standard parabolic whose Levi is $\GL(m_1)\times\Sp(2m_2)$. Hence the central character $\omega_{\pi_1}$ of $\pi_1$ is
\[
    \omega_{\pi_1}=\omega_{\sigma_3}\omega_{\sigma_1}^{-1}\omega_{\tau_1}^{-1}.
\]

Since $\sigma$ is essentially tempered and $\sigma_1^\vee\otimes\sigma_2^\vee\otimes\sigma_3^\vee$ is a constituent of $R_{P_{l_1,l_2,l_3}^{\GL}}(\sigma^\vee)^\vee=\overline{R}_{P_{l_1,l_2,l_3}^{\GL}}(\sigma)$, by Proposition \ref{P:Silberger}, for $|a|>1$ we have
\[
    |\omega_{\sigma_1}^{-1}(a)|\leq|a|^{l_1e(\sigma)}
\]
and
\[
    |\omega_{\sigma_3}(a)|\leq|a|^{-l_3e(\sigma)}.
\]
Also since the exponents of $\tau$ are bounded by $e(\sigma)$ by our assumption and $\tau_1^\vee\otimes\tau_2^\vee$ is a constituent of $R_{P_{m_1}}(\tau^\vee)^\vee=\overline{R}_{P_{m_1}}(\tau)$, we have
\[
    |\omega_{\tau_1}^{-1}(a)|<|a|^{m_1e(\sigma)}.
\]
Hence for $|a|>1$, we have
\[
    |\omega_{\pi_1}(a)|<|a|^{(l_1-l_3+m_1)e(\sigma)}.
\]
Notice that since $k=l_3+l_1+m_1$,
\[
    (l_1-l_3+m_1)e(\sigma)=(k-2l_3)e(\sigma)\leq ke(\sigma).
\]
So the lemma is proven for the symplectic group.

For the orthogonal group, the proof is identical except that we need to use \cite{Ban} instead of \cite{Tadic95} for the computation of the Jacquet module of induced representations. Note that in \cite{Ban} she treats only the split orthogonal group, but her proof works for the non-split group. Indeed, the proof of the non-split case is even simpler. The detail is left to the reader.
\end{proof}

Now we are ready to prove Proposition \ref{P:exponent}.

\begin{proof}[Proof of Proposition \ref{P:exponent}]
We will prove it by induction on $t$, \ie the number of inducing data. Assume $t=1$ and so $\pi$ is a constituent of $\delta_1\rtimes\tau$. By Proposition \ref{P:Silberger} (a), the exponents of $\tau$ are bounded by $0$ and so \emph{a fortiori} by $e(\delta_1)$. So by the lemma the exponents of $\pi$ are bounded by $e(\delta_1)$. Now assume $\pi$ is a constituent of $\delta_1\times\cdots\times\delta_{t+1}\rtimes\tau$. Note that
\[
    \delta_1\times\cdots\times\delta_{t+1}\rtimes\tau
    =\delta_1\rtimes(\delta_2\times\cdots\times\delta_{t+1}\rtimes\tau),
\]
and so by the induction hypothesis the exponents of every constituent of $\delta_2\times\cdots\times\delta_{t+1}\rtimes\tau$ are bounded by $e(\delta_2)$ and hence by $e(\delta_1)$. The above lemma immediately implies that the exponents of $\pi$ are bounded by $e(\delta_1)$.
\end{proof}

There are a couple of corollaries we need to mention.

\begin{cor}\label{C:exponent}
If $\pi$ is a constituent of $\delta_1\times\dots\times\delta_t\rtimes\tau$ as in the proposition, then the exponents of $\pi^\vee$ are also bounded by $e(\delta_1)$.
\end{cor}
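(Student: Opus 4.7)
The plan is to reduce Corollary \ref{C:exponent} directly to Proposition \ref{P:exponent} by exhibiting $\pi^\vee$ as a constituent of a standard module with the same leading essentially tempered datum $\delta_1$.

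First I would use the duality $(\Ind_P^G\sigma)^\vee\cong\Ind_{\overline P}^G\sigma^\vee$ for normalized parabolic induction, together with the standard change-of-parabolic fact that $\Ind_P^G$ and $\Ind_{\overline P}^G$ have the same Jordan--H\"older content. Applied to $\sigma=\delta_1\otimes\cdots\otimes\delta_t\otimes\tau$, this realizes $\pi^\vee$ as a constituent of $\delta_1^\vee\times\cdots\times\delta_t^\vee\rtimes\tau^\vee$ in the Grothendieck group.

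Next I would invoke the structure of the relative Weyl group of a Levi $M=\GL(n_1)\times\cdots\times\GL(n_t)\times G'$ inside $G=\Sp(2n)$ or $\OO(V_r)$: the quotient $W_G/W_M$ contains the sign-change elements which act on inducing data by taking the contragredient of an arbitrary subset of the $\GL$-factors while fixing the $G'$-factor. Since Weyl-conjugate inducing data yield induced representations with the same Jordan--H\"older constituents, at the level of the Grothendieck group one has
\[
    \delta_1^\vee\times\cdots\times\delta_t^\vee\rtimes\tau^\vee
    \;=\;\delta_1\times\cdots\times\delta_t\rtimes\tau^\vee.
\]
The right-hand side is again a standard module: the ordering $e(\delta_1)>\cdots>e(\delta_t)>0$ of the $\GL$-data is preserved, and $\tau^\vee$ is tempered because $\tau$ is.

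Finally, Proposition \ref{P:exponent} applied to this new standard module shows that the exponents of every irreducible constituent, and in particular those of $\pi^\vee$, are bounded by $e(\delta_1)$. The only non-routine ingredient is the Weyl-group assertion, which is a standard feature of classical groups---the relative Weyl group of a Levi with $\GL$-blocks is generated by block permutations and by independent sign changes, the latter inducing contragredient on the corresponding $\GL$-factor---so I do not anticipate any substantive obstacle.
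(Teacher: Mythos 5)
Your argument is correct and is essentially the paper's own proof, spelled out with more detail. The paper asserts directly that $\pi^\vee$ is a constituent of $\delta_1^\vee\times\cdots\times\delta_t^\vee\rtimes\tau^\vee$ and that this equals $\delta_1\times\cdots\times\delta_t\rtimes\tau^\vee$ up to semisimplification; you supply the standard justifications (contragredient of induction is induction from the opposite parabolic, which has the same Jordan--H\"older content, and the sign-change elements of the relative Weyl group implement $\delta_i\mapsto\delta_i^\vee$ on each $\GL$-factor), but the route to Proposition \ref{P:exponent} is identical.
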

\begin{proof}
Note that $\pi^\vee$ is a constituent of $\delta_1^\vee\times\dots\times\delta_t^\vee\rtimes\tau^\vee$. But $\delta_1^\vee\times\dots\times\delta_t^\vee\rtimes\tau^\vee=\delta_1\times\dots\times\delta_t\rtimes\tau^\vee$ up to semisimplification, and if $\tau$ is tempered, then so is $\tau^\vee$. So the lemma immediately follows from the proposition.
\end{proof}

Notice that the above proposition tells us that one can tell how large the exponents of every irreducible admissible representation are by looking at the Langlands quotient data. Moreover, the Langlands quotient data gives the lowest bound for the exponents. Namely,

\begin{cor}
Assume that $\pi$ is an irreducible admissible representation of $\Sp(2n)$ (resp. $\OO(V_r)$), which is the Langlands quotient of $\delta_1\times\dots\delta_t\rtimes\tau$ with $e(\delta_1)>\cdots>e(\delta_t)>0$. Then $e(\delta_1)$ is the smallest number by which the exponents of $\pi$ are bounded.
\end{cor}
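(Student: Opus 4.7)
The plan is to combine Proposition \ref{P:exponent}, which already supplies the upper bound $e(\delta_1)$, with an explicit exponent of $\pi$ realizing this bound, thereby showing that no $e<e(\delta_1)$ can serve as a bound. Writing $\delta_1\in\Irr(\GL(n_1))$, I would aim to exhibit $\omega_{\delta_1}$ as an exponent of $\pi$ along the maximal parabolic $P_{n_1}$. Since $\delta_1\otimes|\det|^{-e(\delta_1)}$ is tempered and hence has unitary central character, one computes $|\omega_{\delta_1}(aI_{n_1})|=|a|^{n_1 e(\delta_1)}$, and any candidate bound $e<e(\delta_1)$ would then fail as soon as $|a|>1$.

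The construction goes as follows. Since $\pi$ is, by definition, a quotient of the standard module, there is a surjection $\delta_1\times\dots\times\delta_t\rtimes\tau\twoheadrightarrow\pi$. Bernstein's second adjunction (Frobenius reciprocity with respect to the opposite parabolic) translates this into a non-zero $M$-homomorphism
\[
    \delta_1\otimes\delta_2\otimes\dots\otimes\delta_t\otimes\tau\hookrightarrow\overline{R}_P(\pi),
\]
where $P$ has Levi $\GL(n_1)\times\dots\times\GL(n_t)\times G'$; the source being irreducible, this is an embedding. Now set $Q=P_{n_1}$ with Levi $M_Q=\GL(n_1)\times G''$. Since $P\subseteq Q$, transitivity of the opposite Jacquet functor yields $\overline{R}_P(\pi)=\overline{R}^{M_Q}_{P\cap M_Q}(\overline{R}_Q(\pi))$, where $P\cap M_Q=\GL(n_1)\times P'$ for an appropriate parabolic $P'$ of $G''$. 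Decomposing $\overline{R}_Q(\pi)$ into irreducible constituents $\rho_1\otimes\rho_2$ of $M_Q$ and matching these against the embedded $\delta_1\otimes\dots\otimes\tau$ via the iterated opposite Jacquet functor on each $\rho_1\otimes\rho_2$, one forces $\rho_1=\delta_1$ for at least one such constituent. Since both $\Sp$ and $\OO$ groups have trivial split center, the split torus $A$ associated with $M_Q$ is simply $Z(\GL(n_1))\cong F^\times$, and the restriction to $A$ of the central character of $\delta_1\otimes\rho_2$ is precisely $\omega_{\delta_1}$. This exhibits the desired exponent, and combining with Proposition \ref{P:exponent} finishes the argument.

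The main point of care is the bookkeeping in the matching step: one must track the specific irreducible constituent $\delta_1$ through the transitivity formula, to be sure that it (rather than some other essentially tempered representation with a differently sized central character) really appears in the $\GL(n_1)$-slot of a constituent of $\overline{R}_{P_{n_1}}(\pi)$. This reduces to comparing the unique irreducible subrepresentation $\delta_1\otimes\dots\otimes\tau$ supplied by the second adjunction with the semisimplification produced by transitivity, so it is purely formal and I expect no serious obstruction.
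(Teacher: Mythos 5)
Your proof is correct, and both the upper bound via Proposition \ref{P:exponent} and the realization of $e(\delta_1)$ via the second adjunction are the right two halves of the argument. The only difference from the paper is the path you take to exhibit $\omega_{\delta_1}$ as an exponent along $P_{n_1}$: you apply Bernstein's second adjunction with the full parabolic $P$ (Levi $\GL(n_1)\times\dots\times\GL(n_t)\times G'$), landing in $\overline{R}_P(\pi)$, and then climb back up to $\overline{R}_{P_{n_1}}(\pi)$ via transitivity and a constituent-matching step. The paper instead first rewrites the standard module by induction in stages as $\Ind_{P_{n_1}}^G\bigl(\delta_1\otimes(\delta_2\times\dots\times\delta_t\rtimes\tau)\bigr)$ and applies the second adjunction once, directly at $P_{n_1}$, obtaining $\Hom_{M}\bigl(\delta_1\otimes(\delta_2\times\dots\times\delta_t\rtimes\tau),\overline{R}_{P_{n_1}}(\pi)\bigr)\neq 0$; since $\delta_1$ is irreducible, every constituent of the image has $\delta_1$ in the $\GL(n_1)$-slot, and $\omega_{\delta_1}$ is then an exponent with no matching argument needed. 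This is exactly the step you flag as your ``main point of care'': your bookkeeping is sound (Jacquet functors are exact, so the semisimplification decomposes compatibly under transitivity, forcing $\rho_1=\delta_1$ for some constituent $\rho_1\otimes\rho_2$), but the paper's choice of parabolic makes it vanish entirely. Worth internalizing: when you only need information along one particular maximal parabolic, apply the adjunction at that level and keep the remaining inducing data bundled.
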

\begin{proof}
Since
\[
     \Hom_G(\delta_1\times\dots\times\delta_t\rtimes\tau, \pi)\neq 0,
\]
where $G$ is $\Sp(2n)$ (resp. ($\OO(V_r)$), by Frobenius reciprocity,
\[
     \Hom_M(\delta_1\otimes(\delta_2\times\dots\times\delta_t\rtimes\tau), \overline{R}_{P_{n_1}}(\pi))\neq 0,
\]
where $M$ is $GL(n_1)\times\Sp(2(n-n_1))$ (resp. $GL(n_1)\times\OO(V_{r-n_1})$). So there exists an exponent $\omega$ along this parabolic such that $\omega=\omega_{\delta_1}$, and hence $|\omega(a)|=|a|^{n_1e(\delta_1)}$ for $a\in F^\times$. Thus if the exponents of $\pi$ are bounded by $e$, we must have $e(\delta_1)\leq e$. But by the proposition, we already know that the exponents of $\pi$ are bounded by $e(\delta_1)$, which completes the proof.
\end{proof}

\quad\\


\section{on the local theta lift for isometry groups}\label{S:isometry}


The major object of this section is to improve upon the result of Roberts \cite{Rob98} regarding the non-archimedean theta correspondence by applying our notion of ``boundedness". In particular we will replace his temperedness assumption in \cite{Rob98} by a weaker assumption of ``bounded by 1''. Accordingly, except Corollary \ref{C:main_local}, which will be given at the end of this section, we assume that the base field is a non-archimedean local field of characteristic 0.

The first theorem in this section is
\begin{thm}
Recall that $\dim V_r=m=m_a+2r$. Let $\pi\in\Ind(\Sp(2n))$ and $\pi'\in\Ind(\Sp(2n'))$, and also let $\sigma\in\Ind(\OO(V_r))$ and $\sigma'\in\Ind(\OO(V_{r'}))$.
\begin{enumerate}[(1)]
\item (From orthogonal to symplectic.) Let $n$ and $n'$ be such that $2n'> 2n\geq \dim V_r$. Assume both $\pi$ and $\pi'$ correspond to $\sigma$ under the theta correspondence. If $\pi$ is bounded by $e\leq1$, then $\pi'$ is an irreducible quotient of
\[
    \chi|\;|^{n'-m/2}\times\chi|\;|^{n'-1-m/2}\times\cdots\times\chi|\;|^{n+1-m/2}\rtimes\pi.
\]
In particular if $\pi$ is the Langlands quotient of
\[
    \delta_1\times\cdots\times\delta_1\rtimes\tau
\]
such that $e(\delta_1)<1$, then $\pi'$ is the Langlands quotient of
\[
    \chi|\;|^{n'-m/2}\times\chi|\;|^{n'-1-m/2}\times\cdots\times\chi|\;|^{n+1-m/2}\times\delta_1\times\cdots\times\delta_1\rtimes\tau.
\]

\item (From symplectic to orthogonal.) Let $r$ and $r'$ be such that $m_a+2r'> m_a+2r\geq 2n+2$. Assume both $\sigma$ and $\sigma'$ correspond to $\pi$ under the theta correspondence. If $\sigma$ is bounded by $e\leq1$, then $\sigma'$ is an irreducible quotient of
\[
    |\;|^{m_a/2+r'-1-n}\times|\;|^{m_a/2+r'-2-n}\times\cdots\times|\;|^{m_a/2+r-n}\rtimes\sigma.
\]
In particular if $\sigma$ is the Langlands quotient of
\[
    \delta_1\times\cdots\times\delta_1\rtimes\tau
\]
such that $e(\delta_1)<1$, then $\sigma'$ is the Langlands quotient of
\[
    |\;|^{m_a/2+r'-1-n}\times|\;|^{m_a/2+r'-2-n}\times\cdots\times|\;|^{m_a/2+r-n}\times\delta_1\times\cdots\times\delta_1\rtimes\tau.
\]
\end{enumerate}
\end{thm}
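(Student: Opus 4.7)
The strategy is to generalize Roberts' argument in \cite{Rob98}, replacing temperedness with the boundedness hypothesis via Proposition \ref{P:exponent} and its corollaries. I sketch part (1) in detail; part (2) is parallel, using Kudla's filtration on the orthogonal side.

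Set $k = n' - n \geq 1$ and let $Q_k$ be the standard maximal parabolic of $\Sp(2n')$ with Levi $\GL(k) \times \Sp(2n)$. First I recall Kudla's filtration of the Jacquet module $R_{Q_k}(\omega_{V_r,n'})$ as an $\OO(V_r) \times \GL(k) \times \Sp(2n)$-module: its graded pieces $J^j$, for $0 \leq j \leq \min(k,r)$, are parabolically induced from the character $\chi|\;|^{?}$ on a Levi of $\GL(k)$ tensored with a Weil representation $\omega_{V_{r-j},n}$ for a smaller pair. The top piece $J^0$ is precisely
\[
\bigl(\chi|\;|^{n'-m/2} \times \chi|\;|^{n'-1-m/2} \times \cdots \times \chi|\;|^{n+1-m/2}\bigr) \otimes \omega_{V_r,n},
\]
and the successive quotients $J^j$, $j \geq 1$, involve strictly smaller characters on one tensor slot of the $\GL(k)$-factor.

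Since $\pi'$ corresponds to $\sigma$, there is a nonzero $\OO(V_r) \times \Sp(2n')$-map $\omega_{V_r,n'} \to \sigma \otimes \pi'$. Applying $R_{Q_k}$ on the second variable and using Frobenius reciprocity, I obtain a nonzero map from the filtered module above into $\sigma \otimes \overline{R}_{Q_k}(\pi')^\vee$ (after dualizing appropriately). The heart of the argument is to show this map must factor through the top piece $J^0$. A contribution from some $J^j$ with $j \geq 1$ would force a constituent of $\overline{R}_{Q_k}(\pi')$ whose $\GL(k)$-central character has an exponent on $F^\times$ strictly larger than $n' - m/2$ (because $j \geq 1$ shifts the top character upward while coupling to the smaller Weil piece $\omega_{V_{r-j},n}$, whose correspondence with $\sigma$ is constrained via $\pi$). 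But by Corollary \ref{C:exponent} applied to $\pi'$, whose Langlands data is in turn bounded via $\pi$ (which is bounded by $e \leq 1$) together with the stable-range inequality $2n \geq m$, such an exponent cannot occur. This is exactly where the hypothesis $e \leq 1$ does the work that temperedness ($e = 0$) did for Roberts: it is the sharp threshold beyond which the exponent arithmetic in the filtration forces a genuine contradiction with Proposition \ref{P:exponent}.

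Once only $J^0$ survives, Frobenius reciprocity identifies $\pi'$ as an irreducible quotient of $\chi|\;|^{n'-m/2} \times \cdots \times \chi|\;|^{n+1-m/2} \rtimes \pi$, which is the first assertion. For the ``in particular'' statement, the chain of inequalities $n'-m/2 > n'-1-m/2 > \cdots > n+1-m/2 \geq 1 > e(\delta_1) > \cdots > e(\delta_t) > 0$ (using $2n \geq m$ and $e(\delta_1) < 1$) places the expanded induced representation in standard-module form, whose Langlands quotient is unique; hence $\pi'$ is that Langlands quotient with the stated data.

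The main obstacle is Step 2: tracking the precise exponents appearing in the $J^j$ pieces and verifying that $e \leq 1$ is the exact threshold that excludes $j \geq 1$ contributions. This requires combining Kudla's explicit description of the filtration with a careful bookkeeping of exponents on both sides of the correspondence, which is where Proposition \ref{P:exponent} becomes indispensable by converting the Langlands-quotient data of $\pi$ into an effective bound on all Jacquet exponents. Once this is in place, part (2) follows by the symmetric argument with $\omega_{V_{r'},n}$ and the maximal parabolic of $\OO(V_{r'})$ whose Levi is $\GL(r'-r) \times \OO(V_r)$.
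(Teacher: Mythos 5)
Your approach is genuinely different from the paper's, and unfortunately it has gaps that I do not think can be closed as sketched. The paper's proof does \emph{not} analyze the Kudla--Rallis Jacquet-module filtration of the single Weil representation $\omega_{V_r,n'}$. Instead it runs a ``generalized doubling'' argument: it combines the two nonzero maps (for $\pi$ and $\pi'$, or $\sigma$ and $\sigma'$) coming from the two correspondences with the middle object, twists one by $\delta$ to produce contragredients, contracts along the middle pairing $\pi\otimes\pi^\vee\to\mathbf{1}$, embeds the resulting coinvariants into a degenerate principal series $I_R(s_0)$ on a large split group, and then analyzes the Kudla filtration \emph{of that principal series}, finishing with the lemma of Moeglin--Vign\'eras--Waldspurger on p.~59 to peel off a $\sigma\otimes\sigma^\vee$ from the Schwartz-space piece $\rho_r$. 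The exponent inequality you highlight does appear, but in a different place and referred to a different filtration, and with the \emph{smaller} object's boundedness hypothesis doing the work directly.

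The most serious problem with your sketch is that your starting point is a single map $\omega_{V_r,n'}\to\sigma\otimes\pi'$, so the hypothesis that $\pi$ \emph{also} corresponds to $\sigma$ never actually enters the argument. Even if the filtration argument went through and the nonzero map factored through $J^0\cong(\chi|\;|^{n'-m/2}\times\cdots)\otimes\omega_{V_r,n}$, you would only learn that some constituent of $R_{P_k}(\pi')$ on the $\Sp(2n)$-factor is \emph{some} theta lift of $\sigma$ to $\Sp(2n)$; you would have no way to identify it with the specific $\pi$ appearing in the statement without invoking Howe duality, which the paper goes out of its way to avoid (this is exactly the point of the remark following Corollary~\ref{C:main_local}). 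A second problem is circularity: your explanation for ruling out $j\geq 1$ appeals to ``Corollary~\ref{C:exponent} applied to $\pi'$, whose Langlands data is in turn bounded via $\pi$.'' But the Langlands data of $\pi'$ is precisely the unknown; the whole theorem is the determination of that data. In the paper's proof the boundedness hypothesis on the \emph{known} small object $\sigma$ (resp.\ $\pi$) enters cleanly because the doubling produces $\sigma'\otimes\sigma$ (resp.\ $\pi^\vee\otimes\pi'$) and the exponent estimate is applied to the $\sigma$-factor, not to the unknown $\sigma'$. A third, smaller issue is that the $j\geq 1$ graded pieces of the Jacquet-module filtration are inductions involving the Jacquet module $R_{Q_j}(\sigma)$ of $\sigma$ along an \emph{orthogonal} parabolic, so controlling them would require a bound on $\sigma$'s exponents; in part (1) the theorem only gives you a bound on $\pi$, and the implication from $\pi$ bounded to $\sigma$ bounded is Theorem~\ref{T:local2}(2), which in the paper is proved \emph{after} this theorem. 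Finally, the ``dualizing appropriately'' step to pass from $R_{P_k}(\pi')$ to $\overline{R}_{P_k}(\pi')$ is precisely where one must use $\pi^\delta\cong\pi^\vee$ and self-duality of orthogonal representations, and without spelling it out there is no way to confirm the map lands where second Frobenius reciprocity needs it.
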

\begin{proof}
The proof is essentially a modification of the one by Roberts \cite[Theorem 4.4]{Rob98}, but we use our ``bounded by $1$" assumption instead of his temperedness assumption. In what follows, we give a self-contained proof for the ``from symplectic to orthogonal" case because this is the case which is not treated by Roberts. The ``from orthogonal to symplectic" case is a straightforward modification of Roberts' proof and left to the reader.

First let
\[
    \delta=\begin{pmatrix} I_n&0\\0&-I_n\end{pmatrix}\in\GL(2n).
\]
Notice that if $g\in\Sp(2n)$, then $\delta g\delta\in\Sp(2n)$. We define $\pi^\delta$ by $\pi^\delta(g)=\pi(\delta g\delta)$. Then we have
\[
    \pi^\delta\cong\pi^\vee.
\]
(See, for example, \cite[Theorem 1.6, Ch. VI]{Kudla_notes}). Similarly, we define $\omega_{V_r,n}^\delta$ by $\omega_{V_r,n}^\delta(h,g)=\omega_{V_r,n}(h,\delta g\delta)$. Then we have
\[
    \omega_{V_r,n}^\delta\cong\omega_{-V_r,n},
\]
where $-V_r$ is the negative of the quadratic form given by $V_r$. Now since $\sigma$ and $\pi$ correspond via theta correspondence, there is a non-zero $\OO(V_r)\times\Sp(2n)$ map
\[
    \omega_{V_r,n}\rightarrow\sigma\otimes\pi.
\]
Hence by twisting by $\delta$ we have a non-zero $\OO(V_r)\times\Sp(2n)$ map
\[
    \omega_{V_r,n}^\delta\cong\omega_{-V_r,n}\rightarrow\sigma\otimes\pi^\vee.
\]
Also since $\pi$ and $\sigma'$ correspond via theta correspondence, there is a non-zero $\OO(V_{r'})\times\Sp(2n)$ map
\[
    \omega_{V_{r'},n}\rightarrow\sigma'\otimes\pi.
\]
Thus we have a non-zero $\OO(V_{r'})\times\OO(V_r)\times\Sp(2n)$ map
\[
    \omega_{V_{r'},n}\otimes\omega_{-V_r,n}\rightarrow\sigma'\otimes\sigma\otimes\pi\otimes\pi^\vee.
\]
Note that there is a canonical embedding $\OO(V_{r'})\times\OO(V_r)\hookrightarrow\OO(V_{r'}\oplus-V_r)$, where for $V_{r'}\oplus-V_r$ the bilinear form is defined by $\langle v'_1+v_1,v'_2+v_2\rangle=\langle v'_1,v'_2\rangle_{V_{r'}}-\langle v_1,v_2\rangle_{V_{r}}$. (This embedding might as well be called the embedding of the ``generalized doubling method''.) Then $\OO(V_r\oplus-V_r)$ is the split orthogonal group of rank
\[
    R=m_a+r+r'.
\]
Via this embedding, one can see that
\[
    \omega_{V_{r'},n}\otimes\omega_{-V_r,n}\cong\omega_{V_{r'}\oplus -V_r,n}|_{\OO(V_{r'})\times\OO(V_r)\times\Sp(2n)}.
\]
Now by composing the canonical $\Sp(2n)$ map $\pi\otimes\pi^\vee\rightarrow \mathbf{1}$, we obtain a surjective $\OO(V_{r'})\times\OO(V_r)$ map
\[
    (\omega_{V_{r'}\oplus -V_r,n})_{\Sp(2n)}\rightarrow\sigma'\otimes\sigma.
\]
Recall that for the orthogonal group every irreducible admissible representation is selfdual and so ${\sigma'}^\vee\otimes\sigma^\vee\cong\sigma'\otimes\sigma$. (See \cite[Theorem 1.6, Ch. VI]{Kudla_notes}.) Hence by taking the contragredient we obtain an injective $\OO(V_{r'})\times\OO(V_r)$ map
\[
    \sigma'\otimes\sigma\hookrightarrow(\omega_{V_{r'}\oplus -V_r,n})^\vee_{\Sp(2n)}\cong(\omega_{-V_{r'}\oplus V_r,n})_{\Sp(2n)}.
\]
Let $I_R(s)$ be the degenerate principal series for the split orthogonal group $\OO(-V_{r'}\oplus V_r)=\OO(R,R)$. By Theorem 5.1 of \cite[Ch. II]{Kudla_notes}, there exists an injective $\OO(R,R)$ map
\[
    (\omega_{-V_{r'}\oplus V_r,n})_{\Sp(2n)}\hookrightarrow I_R(-s_0),
\]
where
\[
    s_0=\frac{R-1}{2}-n.
\]
This gives an injective $\OO(-V_{r'})\times\OO(V_r)=\OO(V_{r'})\times\OO(V_r)$ map
\[
    \sigma'\otimes\sigma\hookrightarrow I_R(-s_0),
\]
By taking the contragredient, we have a non-zero $\OO(V_{r'})\times\OO(V_r)$ map
\[
    I_R(s_0)\rightarrow \sigma'\otimes\sigma,
\]
The degenerate principal series $I_R(s_0)$ admits a filtration of $\OO(V_{r'})\times\OO(V_r)$ representations
\[
    0=I^{r+1}\subset I^{r}\subset\cdots\subset I^1 \subset I^0= I_R(s_0)
\]
such that
\[
    I^i/I^{i+1}\cong\Ind_{P_{r'-i}\times P_{r-i}}^{\OO(V_{r'})\times\OO(V_r)}|\;|^{s_0+\frac{r-i}{2}}\otimes|\;|^{s_0+\frac{r'-i}{2}}\otimes\rho_i
\]
where $\rho_i$ is the representation of $\OO(V_i)\times\OO(V_i)$ on $\Sw(\OO(V_i))$ defined by $\rho_i(h,h')\phi(x)=\phi(h^{-1}xh')$. (This can be shown following the proof of Proposition 2.3 \cite{Kudla_notes} which shows the symplectic case.) So for some $0\leq i\leq r$, we have
\[
    \Hom_{\OO(V_{r'})\times\OO(V_r)}
    (\Ind_{P_{r'-i}\times P_{r-i}}^{\OO(V_{r'})\times\OO(V_r)}|\;|^{s_0+\frac{r-i}{2}}\otimes|\;|^{s_0+\frac{r'-i}{2}}\otimes\rho_i,
    \sigma'\otimes\sigma)\neq 0.
\]
By Frobenius reciprocity,
\[
    \Hom_{M_{r'-i}\times M_{r-i}}(|\;|^{s_0+\frac{r-i}{2}}\otimes|\;|^{s_0+\frac{r'-i}{2}}\otimes\rho_i,
    \overline{R}_{P_{r'-i}}(\sigma')\otimes\overline{R}_{P_{r-i}}(\sigma))\neq 0.
\]
So there exist an irreducible subquotient $\sigma_1\otimes\sigma_2\in\Irr(\GL(r-i)\times\OO(V_{i}))$ of $\overline{R}_{P_{r-i}}(\sigma)$ and an irreducible subquotient $\sigma'_1\otimes\sigma'_2\in\Irr(\GL(r-i)\times\OO(V_{i}))$ of $\overline{R}_{P_{r'-i}}(\sigma')$ such that
\[
    \Hom_{M_{r'-i}\times M_{r-i}}(|\;|^{s_0+\frac{r-i}{2}}\otimes|\;|^{s_0+\frac{r'-i}{2}}\otimes\rho_i,
    \sigma'_1\otimes\sigma'_2\otimes\sigma_1\otimes\sigma_2)\neq 0.
\]
Hence we have
\[
    |\;|^{s_0+\frac{r'-i}{2}}=\sigma_1.
\]

First suppose $i<r$ and so $r-i\geq 1$. By our assumption $\sigma$ is bounded by $e\leq 1$ and so for $|a|>1$, we have
\[
    |\det(a I_{r-i})|^{s_0+\frac{r'-i}{2}}=|\omega_{\sigma_1}(a)|\leq |a|^{(r-i)e}\leq|a|^{r-i},
\]
which gives
\[
    (r-i)(s_0+\frac{r'-i}{2})\leq r-i.
\]
Since we are assuming $r-i>0$, we have $s_0+\frac{r'-i}{2}\leq 1$. Recalling $s_0=\frac{R-1}{2}-n$ and $R=m_a+r+r'$, we obtain $m_a+r+r'-1-2n+r'-i\leq 2$. Also by our assumption, $2n+2\leq m_a+2r$. So $r-i\leq -2(r'-r)+1$. Now $r'-r> 0$ and so we have $r-i<1$, which is a contradiction because $r-i\geq 1$.

Hence we have $i=r$ and so
\[
    \Hom_{\OO(V_{r'})\times\OO(V_r)}
    (\Ind_{P_{r'-r}\times \OO(V_r)}^{\OO(V_{r'})\times\OO(V_r)}|\;|^{s_0}\otimes\rho_r,
    \sigma'\otimes\sigma)\neq 0.
\]
By Frobenius reciprocity, this gives
\[
    \Hom_{P_{r'-r}\times \OO(V_r)}(|\;|^{s_0}\otimes\rho_r,
    (\sigma'|_{P_{r'-r}}\otimes\delta_{P_{r'-r}}^{1/2})\otimes\sigma)\neq 0,
\]
\ie
\[
    \Hom_{P_{r'-r}\times \OO(V_r)}(\rho_r,
    (\sigma'|_{P_{r'-r}}\otimes|\;|^{-s_0}\delta_{P_{r'-r}}^{1/2})\otimes\sigma^\vee)\neq 0.
\]
Here we used the fact that for the orthogonal group every irreducible admissible representation is selfdual. Also notice that on $\rho_r$, the $\GL(r'-r)$ part of the parabolic $P_{r'-r}$ acts trivially and hence it also acts trivially on $\sigma'|_{P_{r'-r}}\otimes|\;|^{-s_0}\delta_{P_{r'-r}}^{1/2}$. Now let $f$ be a non-trivial element in this Hom space. By the lemma on p. 59 of \cite{MVW}, there is a surjective $\OO(V_r)\times\OO(V_r)$ map
\[
   \rho_r\rightarrow\sigma\otimes\sigma^\vee
\]
whose kernel is contained in $\ker f$. Hence we have a non-zero $\OO(V_r)\times\OO(V_r)$ map
\[
    \sigma\otimes\sigma^\vee\rightarrow(\sigma'|_{P_{r'-r}}\otimes|\;|^{-s_0}\delta_{P_{r'-r}}^{1/2})\otimes\sigma^\vee.
\]
Since $\GL(r'-r)$ acts trivially on $\sigma'|_{P_{r'-r}}\otimes|\;|^{-s_0}\delta_{P_{r'-r}}^{1/2}$, this map can be extended to a $\OO(V_r)\times P_{r'-r}$ map. Hence
\[
     \Hom_{P_{r'-r}}(\sigma,\sigma'|_{P_{r'-r}}\otimes|\;|^{-s_0}\delta_{P_{r'-r}}^{1/2})\neq 0.
\]
By Frobenius reciprocity,
\[
     \Hom_{\OO(V_{r'})}(\Ind_{P_{r'-r}}^{\OO(V_{r'})}(|\;|^{s_0}\boxtimes\sigma),\sigma')\neq 0.
\]
One can easily see that there is a surjective $\GL(r'-r)$ map
\[
     \Ind_{P_{1,\dots,1}^{\GL}}^{\GL(r'-r)}(|\;|^{m_a/2+r'-1-n}\times|\;|^{m_a/2+r'-2-n}\times\cdots\times|\;|^{m_a/2+r-n})\rightarrow|\;|^{s_0}.
\]
(Indeed the one dimensional representation $|\;|^{s_0}$ of $\GL(r'-r)$ is the Langlands quotient of this induced representation.) Therefore there is a surjective $\OO(V_{r'})$ map
\[
    |\;|^{m_a/2+r'-1-n}\times|\;|^{m_a/2+r'-2-n}\times\cdots\times|\;|^{m_a/2+r-n}\rtimes\sigma\rightarrow\sigma',
\]
which completes the proof.
\end{proof}

\begin{rmk}
As we mentioned at the beginning of the proof, the first case of the theorem, \ie the orthogonal-to-symplectic case is even a more straightforward modification of \cite[Theorem 4.4]{Rob98}. However in \cite{Rob98} Roberts assumes that $\sigma$ is unitarizable, which he calls pre-unitary. He needed this assumption to have a $\C$-anti-linear map $\sigma\rightarrow\sigma^\vee$. But it is possible to get around this subtle issue in the following way. First notice that $\overline{\omega_{V_r,n}}\cong\omega_{V_r,n}^\delta$, where $\overline{\omega_{V_r,n}}$ is as in the proof of Theorem 4.4. in \cite{Rob98} and $\delta$ is as in our proof above. So since $\sigma^\vee=\sigma$ and $\pi^\vee=\pi^\delta$, we have a non-zero $\OO(V_r)\times\Sp(2n)$ map
\[
    \overline{\omega_{V_r,n}}\rightarrow\sigma^\vee\otimes\pi^\vee.
\]
Hence there is a non-zero $\OO(V_r)\times\Sp(2n)\times\Sp(2n')$ map
\[
    \overline{\omega_{V_r,n}}\otimes\omega_{V_r,n'}\rightarrow\sigma^\vee\otimes\sigma\otimes\pi^\vee\otimes\pi'.
\]
Via the inclusion $\Sp(2n)\times\Sp(2n')\hookrightarrow\Sp(2(n+n'))$ given in p. 1116 of \cite{Rob98},
we have
\[
    \overline{\omega_{V_r,n}}\otimes\omega_{V_r,n'}\cong\omega_{V_r,n+n'}|_{\OO(V_r)\times\left(\Sp(2n)\times\Sp(2n')\right)}.
\]
So by composing with the canonical $\OO(V_r)$ map $\sigma^\vee\otimes\sigma\rightarrow\mathbf{1}$, we obtain a non-zero $\OO(V_r)\times\Sp(2n)\times\Sp(2n')$ map
\[
    \omega_{V_r,n+n'}\rightarrow\pi^\vee\otimes\pi'.
\]
This way, we can suppress the unitarizability assumption present in \cite{Rob98}. The reader can easily verify that the rest of the proof by Roberts does not require $\sigma$ be unitarizable.
\end{rmk}

Now a natural question to ask is for which class of $\sigma\in\Irr(\OO(V_r))$ (resp. $\pi\in\Irr(\Sp(2n))$) the corresponding $\pi\in\Irr(\Sp(2n))$ (resp. $\sigma\in\Irr(\OO(V_r))$) has the desired boundedness property as in the assumption of the theorem. The tempered version of this question has been settled by Roberts \cite[Theorem 4.2]{Rob98} for the orthogonal-to-symplectic case for the range $n\leq \dim V_r/2$. And he has shown that for this range if $\sigma$ is tempered, then $\pi$ is also tempered. We will show an analogue of this in terms of our notion of boundedness. Namely, we have

\begin{thm}\label{T:local2}
Suppose that $\pi\in\Irr(\Sp(2n))$ and $\sigma\in\Irr(\OO(V_r))$, and $\pi$ and $\sigma$ correspond under the theta correspondence. Then
\begin{enumerate}[(1)]
\item (From orthogonal to symplectic.) Assume $2n\leq\dim V_r$. If $\sigma$ is bounded by $e\geq 0$, then $\pi$ is also bounded by $e$.
\item (From symplectic to orthogonal.) Assume $\dim V_r\leq 2n+2$. If $\pi$ is bounded by $e\geq 0$, then $\sigma$ is also bounded by $e$.
\end{enumerate}
\end{thm}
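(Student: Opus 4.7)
I will follow Roberts' proof of the tempered case \cite[Theorem 4.2]{Rob98}, replacing temperedness with the exponent-bound machinery of Section~\ref{S:exponent}. By Proposition~\ref{P:exponent} together with its final corollary, for any irreducible admissible $\pi$ of $\Sp(2n)$ or $\OO(V_r)$, the assertion ``$\pi$ is bounded by $e$'' is equivalent to ``every exponent of $\pi$ along every standard maximal parabolic is bounded by $e$''. So in case~(1) it suffices, given a maximal parabolic $P_k \subset \Sp(2n)$ and a constituent $\pi_1 \otimes \pi_2$ of $\overline{R}_{P_k}(\pi)$, to verify
\[
    |\omega_{\pi_1}(a)| \leq |a|^{ke} \qquad\text{for all } a \in F^{\times} \text{ with } |a| > 1.
\]
Case~(2) is symmetric, with the roles of $\Sp$ and $\OO$ exchanged and the inequality $\dim V_r \leq 2n + 2$ replacing $2n \leq \dim V_r$.

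The central input is Kudla's filtration of the Jacquet module $R_{P_k}(\omega_{V_r, n})$ (see \cite[Ch.~III]{Kudla_notes}): it is a filtration of $M_k \times \OO(V_r)$-modules whose successive quotients $R^a / R^{a+1}$, for $0 \leq a \leq \min(k, r)$, are built by induction from $P^{\GL}_{k-a, a} \times P_a \subset \GL(k) \times \OO(V_r)$, with an explicit character shift of the form $\chi |\;|^{\alpha_{k, a}}$ on the $\GL(a)$ factor and a smaller Weil representation $\omega_{V_{r-a},\, n-k}$ on the remaining piece. Now apply the twist-by-$\delta$ trick from the proof of the preceding theorem: one obtains a nonzero $\OO(V_r) \times \Sp(2n)$-map $\omega_{V_r, n} \to \sigma \otimes \pi^\vee$, whose Jacquet module along $P_k$ produces a nonzero map from some graded piece $R^{a_0} / R^{a_0 + 1}$ onto $\sigma \otimes \pi_1^\vee \otimes \pi_2^\vee$. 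Unwinding this by Frobenius reciprocity and the Tadi\'c-type formulas already used in the proof of Proposition~\ref{P:exponent} forces the existence of a constituent $\sigma_1 \otimes \sigma_2$ of $\overline{R}_{P_{k - a_0}}(\sigma)$ such that $\omega_{\pi_1}$ is determined, up to the character shift $\chi|\;|^{\alpha_{k, a_0}}$, by $\omega_{\sigma_1}$. Invoking Proposition~\ref{P:exponent} to bound $\omega_{\sigma_1}$ then yields
\[
    |\omega_{\pi_1}(a)| \leq |a|^{(k - a_0)\, e + a_0\, \alpha_{k, a_0}} \qquad (|a| > 1).
\]

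The last step is purely numerical: the desired bound $|\omega_{\pi_1}(a)| \leq |a|^{ke}$ reduces to $\alpha_{k, a_0} \leq e$, and substituting the explicit formula for $\alpha_{k, a_0}$ coming from Kudla's filtration, this is exactly the hypothesis $2n \leq \dim V_r$ in case~(1) (respectively $\dim V_r \leq 2n + 2$ in case~(2)). The principal obstacle is the bookkeeping: one must isolate the correct $\alpha_{k, a_0}$ from the filtration and keep track of normalizations when passing between $\pi$ and $\pi^\vee$ and between the various induced constituents, but no conceptual input beyond Section~\ref{S:exponent} and Roberts' method in \cite{Rob98} is needed.
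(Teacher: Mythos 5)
Your overall scaffolding is right — reduce to exponent bounds via Proposition~\ref{P:exponent} and its corollary, push the theta map through Kudla's filtration of a Jacquet module of $\omega_{V_r,n}$, and close with a numerical check against the hypothesis on $\dim V_r$ versus $2n$. That is exactly the architecture of the paper's proof, which is a direct adaptation of Roberts' argument. However, there is a genuine gap in the middle of your plan, at the point where you try to convert a bound on the source representation into a bound on the target.

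You propose, in case~(1), to take an \emph{arbitrary} constituent $\pi_1\otimes\pi_2$ of $\overline{R}_{P_k}(\pi)$ and to bound $\omega_{\pi_1}$ by relating it, through Kudla's filtration, to a constituent $\sigma_1\otimes\sigma_2$ of a Jacquet module of $\sigma$ and then invoking the exponent bound on $\sigma$. But the pairing coming from the $\bsigma_{a_0}$-piece of Kudla's filtration is \emph{contragredient}: it forces $|\omega_{\pi_{11}}(a)|\cdot|\omega_{\sigma_1}(a)| = 1$ (after cancelling the $\xi$ contribution on the diagonal), where $\pi_{11}$ is the $\GL(a_0)$-piece of the relevant Jacquet module of $\pi_1$. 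The hypothesis on $\sigma$ gives only a one-sided bound $|\omega_{\sigma_1}(a)| \leq |a|^{a_0 e}$ for $|a|>1$; inverting this yields a \emph{lower} bound $|\omega_{\pi_{11}}(a)| \geq |a|^{-a_0 e}$, which is useless for establishing the upper bound $|\omega_{\pi_1}(a)| \leq |a|^{ke}$ you are after. There is no two-sided bound available: Proposition~\ref{P:exponent} and Casselman's criterion give $s \leq a_0 e$ for exponents $|\omega| = |\cdot|^s$ along $\overline{R}_{P}$, but $s$ can a priori be as negative as the Langlands data allow, and that is exactly the direction your argument would need to control.

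The paper avoids this by never trying to bound an arbitrary exponent of the target. Instead it writes the target as a subrepresentation of the contragredient standard module, $\sigma \hookrightarrow \delta_1^\vee \rtimes (\delta_2^\vee\times\cdots\rtimes\tau^\vee)$ in case~(2), applies Frobenius reciprocity to the parabolic $Q_j$ of the \emph{target} group with $j=$ the rank of $\delta_1$, and then runs Kudla's filtration. The decisive point is that $\rho := \delta_1^\vee$ sits in the \emph{first} Frobenius slot, so its central character is known \emph{exactly}: $|\omega_\rho(a)| = |a|^{-j\,e(\delta_1)}$, not merely bounded. The inequality then enters only once, from the bound $|\omega_{\pi_1}(a)| \leq |a|^{ke}$ on the \emph{source} side, and the contragredient pairing $|\omega_{\rho_1}||\omega_{\pi_1}| = 1$ feeds this into the exact identity for $\omega_\rho$ in the right direction, yielding $j\,e(\delta_1) \leq ke + (\text{Kudla shift})$. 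The hypothesis $\dim V_r \leq 2n+2$ (resp.\ $2n \leq \dim V_r$) is then used to show the shift is $\leq 0$, which combined with $k\leq j$ gives $e(\delta_1)\leq e$. So the fix to your plan is: do not range over all $k$ and all constituents; instead pick out $\delta_1^\vee$ of the \emph{target} via the subrepresentation embedding and Frobenius, and let the source bound do the only estimating.

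Two smaller points. First, the numerical bookkeeping in your plan appears to have the two factors swapped: the $\GL(a_0)\times\GL(a_0)$ piece of $\bsigma_{a_0}$ is the part that pairs the two groups (and cancels in absolute value), while the Kudla character shift lands on the complementary $\GL(k-a_0)$ (resp.\ $\GL(j-k)$) block; your displayed bound $|a|^{(k-a_0)e + a_0\alpha_{k,a_0}}$ has these exchanged. Second, your reduction ``$\alpha_{k,a_0} \leq e$'' would, for arbitrary $e \geq 0$, require $\alpha_{k,a_0} \leq 0$; the paper's inequality is structured so that the shift term is nonpositive \emph{and} there is an extra $j\geq k$ to absorb, which is not visible in your reduction. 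These are symptoms of the same underlying issue: without fixing $j$ from the target's Langlands datum, the bookkeeping does not close.
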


The proof is by now a standard argument using Frobenius reciprocity and the Jacquet module of the Weil representation. Indeed, the proof is again almost identical to the one given by Roberts \cite[p.1113-1114]{Rob98}, especially for the orthogonal-to-symplectic case. But since this theorem is quite crucial for our global applications, we give a self-contained proof with our modified assumption for the symplectic-to-orthogonal case. First we need the following well-known result due to Kudla \cite{Kudla86}.

\begin{lemma}
Let $Q_j$ be our choice of the maximal parabolic of $\OO(V_r)$ whose Levi is $\GL(j)\times\OO(V_{r-j})$, and also let $l=\min(n,j)$. For each $0\leq k\leq l$ define $\bsigma_k$ to be the representation of $\GL(k)\times\GL(k)$ on the space $\Sw(\GL(k))$ with the action given by $\bsigma_k(g,g')\varphi(x)=\varphi(g^{-1}xg')$. Then the Jacquet module $R_{Q_j}(\omega_{V_r,n})$ of the Weil representation has a filtration
\[
    0=F^{l+1}\subset F^{l}\subset F^{l-1}\subset\cdots\subset F^1\subset F^0=R_{Q_j}(\omega_{V_r,n})
\]
such that
\[
    F^{k}/F^{k+1}\cong\Ind_{Q_{jk}\times P_k}^{(\GL(j)\times\OO(V_{r-j}))\times\Sp(2n)}\xi_{jk}\bsigma_{k}\otimes\omega_{V_{r-j},n-k},
\]
where $Q_{jk}$ is the standard parabolic of $\GL(j)$ whose Levi is $\GL(j)\times\GL(j-k)$, $P_k$ is the standard maximal parabolic of $\Sp(2n)$ whose Levi is $\GL(j)\times\Sp(2n-2k)$, and $\xi_{jk}$ is a character on $Q_{jk}\times P_k$ whose restriction on $(\GL(k)\times\GL(j-k))\times\GL(k)\subset Q_{jk}\times P_k$ is given by
\[
    \xi_{jk}(h_1,h_2,g)=|\det h_1|^{-(\frac{1}{2}m-j+\frac{k-1}{2})}|\det h_2|^{-(\frac{1}{2}m-\frac{1}{2}j-n+\frac{k-1}{2})}
    \chi(\det g)|\det g|^{\frac{1}{2}m-j+\frac{k-1}{2}},
\]
where $(h_1,h_2)\in\GL(k)\times\GL(j-k)\subset Q_{jk}$ and $g\in \GL(k)\subset P_k$.
\end{lemma}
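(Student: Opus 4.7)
The plan is to realize the Weil representation $\omega = \omega_{V_r,n}$ in a partial Schr\"odinger model adapted to the parabolic $Q_j$ and then produce the claimed filtration from the rank stratification of an affine linear action of the Levi. This is essentially Kudla's original argument \cite{Kudla86}; the key inputs are Witt's theorem, an orbit analysis, and a careful bookkeeping of modular characters.

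I would fix the Witt decomposition $V_r = X_j \oplus V_{r-j} \oplus X_j^*$ with $X_j = \Span\{v_1,\dots,v_j\}$ the isotropic subspace stabilized by $Q_j$, and realize $\omega$ on $\Sw(X_j^n) \otimes \Sw(V_{r-j}^n)$ in the mixed model whose polarization places $(X_j^*)^n$ on one side. In this model the unipotent radical $N_j$ of $Q_j$ acts by translation-type characters paired against the $X_j^n$ variable; the Levi $\GL(j) \times \OO(V_{r-j})$ acts naturally on both tensor factors, up to explicit $\chi$ and $|\det|$ twists arising from the metaplectic cocycle; and the $\Sp(2n)$-action on the second tensor factor recovers $\omega_{V_{r-j}, n}$.

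Taking normalized $N_j$-coinvariants isolates the subspace on which the translation characters act trivially, which reduces us to $\Sw(\Hom(F^n, X_j)) \otimes \omega_{V_{r-j}, n}$ with the Levi acting through its natural action twisted by the cocycle. I would then stratify $\Hom(F^n, X_j) \cong M_{j \times n}(F)$ by rank: let $Z_k$ be the locally closed subvariety of rank-$k$ matrices, and define $F^k$ to be the subspace of Schwartz functions supported on the closed subset $\{\phi : \operatorname{rank}\phi \geq k\}$, which is the same as the subspace vanishing on the open locus of rank $< k$. This gives a decreasing filtration whose graded piece $F^k/F^{k+1}$ is the Schwartz space on the single stratum $Z_k$. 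Since $\GL(j) \times \GL(n)$ acts transitively on $Z_k$ with the stabilizer of the base point $\begin{pmatrix} I_k & 0 \\ 0 & 0 \end{pmatrix}$ equal to $Q_{jk} \times P^{\GL}_{k,n-k}$, and the diagonal $\GL(k) \times \GL(k)$ inside this stabilizer acts on the fiber through exactly $\bsigma_k$, one obtains the induced form asserted in the statement; the surviving $\Sp(2n-2k)$-action on $\Sw(V_{r-j}^{n-k})$ supplies the $\omega_{V_{r-j}, n-k}$ tensor factor.

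The main technical obstacle, and the only part of the argument requiring real care, is verifying that the character through which the remaining Levi factors act on the fiber is precisely $\xi_{jk}$. Four contributions must be collected: (i) $\delta_{Q_j}^{-1/2}$ from the normalization of the Jacquet functor $R_{Q_j}$; (ii) $\delta_{Q_{jk} \times P_k}^{-1/2}$ from the normalization of the target parabolic induction; (iii) the $\chi(\det g)|\det g|^{m/2}$ twist of the Weil representation at $\GL(k) \subset P_k \subset \Sp(2n)$; and (iv) the Jacobian produced when realizing $Z_k$ as a $\GL(j) \times \GL(n)$-homogeneous space. Adding these to the natural $|\det|$-powers on the $\GL(j)$ side coming from the Schr\"odinger model yields the exponents $-\bigl(\tfrac{m}{2} - j + \tfrac{k-1}{2}\bigr)$, $-\bigl(\tfrac{m}{2} - \tfrac{j}{2} - n + \tfrac{k-1}{2}\bigr)$, and $\tfrac{m}{2} - j + \tfrac{k-1}{2}$ appearing in the statement. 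This character computation is lengthy but routine and is worked out in detail in \cite[Ch. V]{Kudla_notes}.
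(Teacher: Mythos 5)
Your sketch is a reconstruction of Kudla's own proof of Theorem 2.8 in \cite{Kudla86}, which is precisely the result the paper cites here (with the caveat that the parabolic conventions differ, so the characters $\xi_{jk}$ and $\bsigma_k$ come out slightly differently; the paper refers to \cite{GT4} for the similitude analogue worked out in detail). So the approach is the same as the paper's; you are simply unpacking what the citation delegates.

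One imprecision worth flagging: you describe the computation as first taking $N_j$-coinvariants to arrive at $\Sw(\Hom(F^n,X_j))\otimes\omega_{V_{r-j},n}$, and then stratifying. The order matters. The rank filtration must be set up on the Schwartz space of the mixed model \emph{before} passing to coinvariants, one checks it is $N_j$-stable (and $\GL(j)\times\OO(V_{r-j})\times\Sp(2n)$-stable), and then one computes the $N_j$-coinvariants of each graded piece separately; since the Jacquet functor is only right exact, it does not automatically commute with a filtration, and Kudla's argument is arranged exactly so that this works. Also, in the mixed model the $\GL(j)$-variable is naturally $X_j\otimes W\cong M_{j\times 2n}$, and it is only on the rank strata after coinvariants that the effective parameter space becomes $M_{j\times k}$-like; writing $\Hom(F^n,X_j)$ at the outset conflates these stages. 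These are bookkeeping points, not conceptual gaps, but in a self-contained write-up they are precisely where the $\xi_{jk}$ exponents get earned.
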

\begin{proof}
This is essentially Theorem 2.8 of \cite{Kudla86}. However we should mention that his choice of the parabolic for $\OO(V_r)$ is different from ours, and this is the reason we have slightly different actions for $\xi_{jk}$. and $\bsigma_k$. Our convention follows \cite{Rob98}. Also a detailed proof for the case of similitude groups appears in \cite{GT4}.
\end{proof}

Using this, we prove our theorem.

\begin{proof}[Proof of Theorem \ref{T:local2}]
Once again, we only prove (2) of the theorem. So assume $\pi\in\Irr(\Sp(2n))$ is bounded by $e\geq 0$. Also assume $\sigma\in\Irr(\OO(V_r))$ corresponding to $\pi$ is the Langlands quotient of $\delta_1\times\cdots\times\delta_t\rtimes\tau$ where $\delta_1$ is a representation of $\GL(j)$. Then we will show that $e(\delta_1)\leq e$, which will prove the theorem. First note that $\sigma$ is a subrepresentation of $\delta_1^\vee\times\cdots\times\delta_t^\vee\rtimes\tau^\vee$. Let
\[
    \rho=\delta_1^\vee\quad\text{and}\quad\rho'=\delta_2^\vee\times\cdots\times\delta_t^\vee\rtimes\tau^\vee
\]
so that $\sigma$ is a subrepresentation of $\rho\rtimes\rho'$. Since $\pi$ and $\sigma$ correspond, we have
\[
    \Hom_{\OO(V_r)\times\Sp(2n)}(\omega_{V_r,n},\;(\rho\rtimes\rho')\otimes\pi)\neq 0.
\]
By Frobenius reciprocity, we have
\[
    \Hom_{(\GL(j)\times\OO(V_{r-j}))\times\Sp(2n)}(R_{Q_j}(\omega_{V_r,n}),\;(\rho\otimes\rho')\otimes\pi)\neq 0.
\]
So this Hom space is non-zero at some stage of the filtration of $R_{Q_j}(\omega_{V_r,n})$ given in the above lemma. Hence for some $0\leq k\leq l$, we have
\[
    \Hom_{(\GL(j)\times\OO(V_{r-j}))\times\Sp(2n)}(\Ind_{Q_{jk}\times P_k}^{(\GL(j)\times\OO(V_{r-j}))\times\Sp(2n)}\xi_{jk}\bsigma_{k}\otimes\omega_{V_{r-j},n-k},\;(\rho\otimes\rho')\otimes\pi)\neq 0.
\]
Hence by Frobenius reciprocity, there is a non-zero $(\GL(k)\times\GL(j-k)\times\OO(V_{r-j}))\times(\GL(k)\times\Sp(2n-2k))$ map
\[
    \xi_{jk}\bsigma_{k}\otimes\omega_{V_{r-j},n-k}\rightarrow (\overline{R}_{P^{\GL}_{k,j-k}}(\rho)\otimes\rho')\otimes\overline{R}_{P_k}(\pi).
\]
So there exist an irreducible constituent $\rho_1\otimes\rho_2\in\Irr(\GL(k)\times\GL(j-k))$ of $\overline{R}_{P^{\GL}_{k,j-k}}(\rho)$ and an irreducible constituent of $\pi_1\otimes\pi_2\in\Irr(\GL(k)\times\Sp(2n-2k))$ of $\overline{R}_{P_k}(\pi)$ so that there is a non-zero $(\GL(k)\times\GL(j-k)\times\OO(V_{r-j}))\times(\GL(k)\times\Sp(2n-2k))$ map
\[
    \xi_{jk}\bsigma_{k}\otimes\omega_{V_{r-j},n-k}\rightarrow (\rho_1\otimes\rho_2\otimes\rho')\otimes(\pi_1\otimes\pi_2).
\]
We see that
\[
    \Hom_{\GL(k)\times\GL(k)}(\xi_{jk}\bsigma_k,\rho_1\otimes\pi_1)\neq 0
\]
and
\[
    \Hom_{\GL(j-k)}(\xi_{jk},\rho_2)\neq 0.
\]
Now since the elements of the form $(aI_k,aI_k)\in\GL(k)\times\GL(k)$ acts trivially on $\bsigma_k$, from the above lemma, one sees that $\xi_{jk}(aI_k,aI_k)=\chi(a)^{k}$. Hence we must have
\[
    |\omega_{\rho_1}(a)||\omega_{\pi_1}(a)|=1.
\]
Also for $aI_{j-k}\in\GL(j-k)$, one sees that $\xi_{jk}(aI_{j-k})=|\det(aI_{j-k})|^{-(\frac{m-j}{2}-n+\frac{k-1}{2})}$. Hence
\[
    \omega_{\rho_2}=|a|^{-(j-k)(\frac{m-j}{2}-n+\frac{k-1}{2})}.
\]
Since $\rho_2\otimes\rho_1$ is a constituent of $\overline{R}_{P^{\GL}_{k,j-k}}(\rho)$, we have $\omega_{\rho}(a)=\omega_{\rho_1}(a)\omega_{\rho_2}(a)$. Hence
\[
    |\omega_{\pi_1}(a)|=|\omega_{\rho_1}(a)|^{-1}=|\omega_{\rho_2}(a)||\omega_{\rho}(a)|^{-1}.
\]
Recall that $\rho=\delta_1^\vee$ and so $|\omega_{\rho}(a)|=-je(\delta_1)$. Therefore we obtain
\[
    |\omega_{\pi_1}(a)|=|a|^{-(j-k)(\frac{m-j}{2}-n+\frac{k-1}{2})+je(\delta_1)}.
\]
Now by our assumption $\pi$ is bounded by $e$, and so for $|a|>1$, we have $|\omega_{\pi_1}(a)|\leq|a|^{ke}$, which gives
\[
    -(j-k)(\frac{m-j}{2}-n+\frac{k-1}{2})+je(\delta_1)\leq ke.
\]

Now by our assumption $m=\dim V_r\leq 2n+2$. So in the above inequality, we see that 
\[
    -(j-k)(\frac{m-j}{2}-n+\frac{k-1}{2})\geq 0. 
\]
Hence we have $0\leq -je(\delta_1)+ke$. Assume for the sake of contradiction that $e(\delta_1)>e$. If $k>0$, we would have $0<-(j-k)e(\delta_1)$, which is a contradiction because $-(j-k)e(\delta_1)\leq 0$. And if $k=0$, then we would have $0\leq -je(\delta_1)$, which is also a contradiction because $j\geq 1$ and so $je(\delta_1)>e\geq 0$. This completes the proof.
\end{proof}

\begin{rmk}
Let us mention that Roberts considers the range $2n>\dim V_r$ for the orthogonal-to-symplectic case, and has computed the first component of the Langlands quotient data of $\pi$ to some degree of explicitness, when $\pi$ is a first occurrence but not tempered. (See Theorem 4.2(2) of \cite{Rob98} for the details.) An analogous result can be shown in our situation. But we will leave this issue to the reader because the theorem will not play any role for our global applications.
\end{rmk}

By combining the two theorems above, we have the following, which will be the key result for our global applications.

\begin{cor}\label{C:main_local}
Assume $\pi\in\Ind(\Sp(2n))$ and $\sigma\in\Ind(\OO(V_r))$. Here we assume that $F$ is not necessarily non-archimedean.
\begin{enumerate}[(1)]
\item (From orthogonal to symplectic.) Let $n=\frac{1}{2}\dim V_r$. Also let $\pi'\in\Irr(\Sp(2n'))$ for $n'> n$. Assume both $\pi$ and $\pi'$ correspond to $\sigma$ under the theta correspondence. If $\sigma$ is bounded by $e<1$, then $\pi'$ is the Langlands quotient of
\[
    \chi|\;|^{n'-m/2}\times\chi|\;|^{n'-1-m/2}\times\cdots\times\chi|\;|^{1}\rtimes L(\pi),
\]
where $L(\pi)$ is the Langlands quotient data of $\pi$.

\item (From symplectic to orthogonal.) Let $\frac{1}{2}\dim V_r=n+1$. Also let $\sigma'\in\Irr(\OO(V_{r'}))$ for $r'> r$. Assume both $\sigma$ and $\sigma'$ correspond to $\pi$ under the theta correspondence. If $\pi$ is bounded by $e<1$, then $\sigma'$ is the Langlands quotient of
\[
    |\;|^{m_a/2+r'-1-n}\times|\;|^{m_a/2+r'-2-n}\times\cdots\times|\;|^{1}\rtimes L(\sigma),
\]
where $L(\sigma)$ is the Langlands quotient data of $\sigma$.
\end{enumerate}
\end{cor}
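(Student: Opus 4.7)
The plan is to deduce the corollary as an essentially immediate consequence of the two theorems already proved in this section. I will focus on part (1); part (2) is entirely analogous, with the roles of the symplectic and orthogonal groups (and the direction of the lift) swapped, and with Theorem \ref{T:local2}(2) and part (2) of the first theorem of this section used in place of their part-(1) counterparts.

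First, since $n = \frac{1}{2}\dim V_r$ gives $2n = \dim V_r$, we land at the endpoint of the range covered by Theorem \ref{T:local2}(1). Applied to the pair $(\sigma,\pi)$, it yields that $\pi$ is bounded by the same $e<1$ by which $\sigma$ is bounded. In particular, either $\pi$ is tempered or $\pi$ is the Langlands quotient of a standard module $\delta_1\times\cdots\times\delta_t\rtimes\tau$ with $e(\delta_1)\leq e<1$; in either case the hypothesis ``bounded by $e\leq 1$'' of the first theorem of this section is satisfied, and we are in the more precise regime $e(\delta_1)<1$ whenever $\pi$ is non-tempered.

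Next, from $n'>n = \frac{1}{2}\dim V_r$ one has $2n'>2n = \dim V_r$, which is precisely the range hypothesis of the first theorem. Applying its sharpened (Langlands quotient) conclusion with $L(\pi) = \delta_1\times\cdots\times\delta_t\rtimes\tau$ in the non-tempered case---and using the fact that, when $\pi$ itself is tempered, the induced module $\chi|\;|^{n'-m/2}\times\cdots\times\chi|\;|^{n+1-m/2}\rtimes\pi$ is already a standard module (its exponents being strictly decreasing and positive), so that its unique irreducible quotient coincides with its Langlands quotient---I will conclude that $\pi'$ is the Langlands quotient of
\[
    \chi|\;|^{n'-m/2}\times\chi|\;|^{n'-1-m/2}\times\cdots\times\chi|\;|^{n+1-m/2}\rtimes L(\pi).
\]
Substituting $n=m/2$ collapses the last exponent to $1$, matching the formula in the statement.

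The only genuine concern is the archimedean case, since the two theorems invoked above were proved under the non-archimedean hypothesis fixed at the start of Section \ref{S:isometry}; this is the main, if mild, obstacle. The expected route is to verify that suitable archimedean analogues of Theorem \ref{T:local2} and of the first theorem of this section hold, via the standard archimedean theta-correspondence machinery (degenerate principal series filtrations, Jacquet modules at the level of Harish-Chandra modules, and the Langlands classification for real reductive groups), together with inputs from the work of J.-S.~Li and A.~Paul on archimedean theta lifts. Once these analogues are in place, the same combination of boundedness transfer and standard-module induction applies uniformly across all local places.
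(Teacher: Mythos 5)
Your non-archimedean argument is correct and is exactly how the paper handles that case: apply Theorem \ref{T:local2}(1) at the boundary $2n=\dim V_r$ to transfer the bound $e<1$ from $\sigma$ to $\pi$, then invoke the ``in particular'' clause of the first theorem of the section with $2n'>2n=\dim V_r$ to read off the Langlands data, specializing $n+1-m/2=1$. Your observation that the tempered case needs a separate sentence (the induced module with all exponents $\geq 1$ is itself a standard module) is a nice clarification that the paper leaves implicit.

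The gap is in your treatment of the archimedean places. You correctly identify that the two theorems in the section were stated and proved only for non-archimedean $F$, but you leave the archimedean case as a proposal to ``verify that suitable archimedean analogues hold'' by redeveloping Jacquet-module/degenerate-principal-series machinery for Harish-Chandra modules. That is a substantial program in itself, not a corollary, and it is not what the paper does. The paper instead settles the archimedean case by direct citation: Theorem 6.2 of Paul \cite{Paul} gives the conclusion for $F=\R$, and Theorem 2.8 of Adams--Barbasch \cite{Ad_Bar95} gives it for $F=\C$ (with the caveat, which the paper flags, that their complex absolute value is $\sqrt{z\bar z}$ rather than $z\bar z$, so the exponents must be rescaled). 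You do mention Paul in passing, but without pointing to the specific result, and you do not mention Adams--Barbasch for the complex case at all; as written, the archimedean half of the corollary is unproven.
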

\begin{proof}
The non-archimedean case immediately follows from the two theorems in this section. For $F=\R$, this is Theorem 6.2 of \cite{Paul}. For $F=\C$, this can be read off from Theorem 2.8 of \cite{Ad_Bar95}, although the reader has to be careful in that the complex valuation in \cite{Ad_Bar95} is the usual absolute value \ie $|z|=\sqrt{z\bar{z}}$, but we use the standard complex valuation \ie $|z|=z\bar{z}$.
\end{proof}

Let us emphasize here that this corollary tells us that for the range as in the corollary the theta lift of a representation which is bounded by $e<1$ is uniquely determined. The proof never used the Howe duality of any sort, and hence the statement holds for the even residual characteristic case.

\quad\\


\section{on the global theta lift for isometry groups}\label{S:isometry}


The theorems in the previous section, especially Corollary \ref{C:main_local}, combined with the method of Roberts \cite{Rob99-2} allows us to prove our main non-vanishing theorem for global theta lifts, namely Theorem \ref{T:main1}.

\begin{proof}[Proof of Theorem \ref{T:main1}]
The proof is a modification of the beautiful argument by Roberts \cite{Rob99-2}, which in turn has its origin \cite{BS91} in the classical context. We reproduce essential points of the proof for the symplectic-to-orthogonal case. So assume $\pi\cong\otimes_v\pi_v$ is an irreducible cuspidal automorphic representation of $\Sp(2n,\A)$ which satisfies the assumption of part (2) of the theorem. Also assume that $\pi$ is realized in a space $V_\pi$ of cusp forms. First of all, the Euler product of the (incomplete) standard $L$-function $L^S(s,\pi,\chi)$ twisted by $\chi$ converges absolutely for $\Re(s)\geq 2$. This can be seen as follows. Since each $\pi_v$ is bounded by some $e_v<1$, each unramified factor of $L^S(s,\pi,\chi)$ is a product of factors of the form
\[
    (1-a_vq_v^{-1})^{-1}\quad \text{with}\quad |a_v|\leq q_v^{e_v}.
\]
One sees that the Euler product of such factors converges absolutely for $\Re(s)>1+e_v$. (See Lemma 2 in p.187 and the first paragraph of p.188 of \cite{Marcus}.) Hence it converges absolutely for $\Re(s)\geq 2$, and in particular does not vanish for $\Re(s)\geq 2$.

Now for the sake of contradiction, assume that the global theta lift $\Theta_{V_r}(V_\pi)$ is zero. Then the global theta lift $\Theta_{V_{r'}}(V_\pi)$ must be non-zero cuspidal for some $r'$ with $r<r'\leq 2n$. Let $\tau$ be an irreducible constituent of $\Theta_{V_{r'}}(V_\pi)$. By the functoriality of the unramified theta correspondence, the standard $L$-function $L^S(s,\tau)$ of $\tau$ is written as
\begin{align*}
L^S(s,\tau)&=L^S(s,\pi,\chi)\zeta^S(s+r'-r)\zeta^S(s+r'-r-1)\cdots\zeta^S(s+1)\cdot\\
&\qquad\quad\zeta^s(s)\zeta^S(s-1)\cdots\zeta^S(s-r'+r+1)\zeta^S(s-r'+r),
\end{align*}
where $\zeta^S(s)$ is the (incomplete) Dedekind zeta function. (See \cite[Corollary 7.1.4]{Kudla-Rallis94}.) Now consider the order of the zero at $s=r'-r$ for this $L$-function. One sees that $\zeta^S(s-r'+r)$ has a zero of order $|S|-1$ and $\zeta^S(s-r'+r+1)$ has a simple pole. Also $L^S(s,\pi,\chi)$ does not vanish (a pole is allowed) at $s=r'-r$. (This is true by our assumption on $\pi$ if $r'-r=1$, and by the absolute convergence if $r'-r\geq 2$.) So we see that the order of vanishing of $L^S(s,\tau)$ is at most $|S|-2$.

On the other hand, we can compute the very same $L$-function $L^S(s,\tau)$ by a totally different method, namely by the doubling method, which gives
\[
    L^S(s,\tau)=\frac{b^S(s-\frac{1}{2})Z(s-\frac{1}{2},f,\Phi)}{\prod_{v\in S}Z_v(s-\frac{1}{2},f_v,\Phi_v)},
\]
where $Z(s-\frac{1}{2},f,\Phi)$ is the doubling integral with $f=\otimes_vf_v$ a matrix coefficient of $\tau$ and $\Phi=\otimes_v\Phi_v$ a $K$-finite standard section, and $b^S(s)$ is a certain normalizing factor. Now the following lemma due to Roberts \cite{Rob99-2} is the key technical ingredient.
\begin{lemma}
Let $v$ be archimedean or non-archimedean. Assume $\tau_v$ is the Langlands quotient of $\delta_1\times\cdots\times\delta_t\rtimes\rho$ with $\delta_1=|\;|^k$ for some $k>0$. Then $f_v$ and $\Phi_v$ can be chosen so that the local zeta integral $Z_v(s-\frac{1}{2},f_v,\Phi_v)$ has a pole at $s=k$.
\end{lemma}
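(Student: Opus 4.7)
The plan is to follow the doubling-method argument of Roberts in \cite{Rob99-2} and reduce the existence of the pole to an explicit one-dimensional Mellin integral. After unfolding the Kudla-Rallis doubling setup, the local zeta integral
$$Z_v(s, f_v, \Phi_v) = \int_{G_v} f_v(g) \Phi_v(s; (g,1)) \, dg$$
can be viewed as a meromorphic family in $\Hom_{G_v \times G_v}(I_v(s),\, \tau_v \boxtimes \tau_v^\vee)$, where $I_v(s)$ is the degenerate principal series of the doubled orthogonal group. Its poles are governed by the interaction between the Bruhat-cell filtration of $I_v(s)$ and the internal structure of $\tau_v$.

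The key observation is that since $\tau_v$ is the Langlands quotient of a standard module whose leading inducing datum is $|\;|^k$ on $\GL(1)$, the contragredient embeds as
$$\tau_v^\vee \hookrightarrow \delta_t^\vee \times \cdots \times \delta_2^\vee \times |\;|^{-k} \rtimes \rho^\vee.$$
By Frobenius reciprocity this produces a non-trivial map from a Jacquet module of $\tau_v^\vee$ along the parabolic whose Levi contains the $\GL(1)$-block attached to $\delta_1$, on which that block acts by $|\;|^{-k}$. The analogous stratum of $I_v(s)$ in the Bruhat filtration carries the character $|\;|^{s+c}$ on the same block, where $c$ is an explicit shift built from $\delta_P^{1/2}$ and the doubling normalization. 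Matching characters localizes the pole; after the shift $s \mapsto s-\tfrac{1}{2}$ present in the statement, it sits precisely at $s = k$. To realize it concretely, I would pick $u \in \tau_v$ and $\tilde u \in \tau_v^\vee$ so that $f_v(g) = \langle \tau_v(g) u, \tilde u\rangle$ pairs non-trivially with the Jacquet-module quotient above, and take $\Phi_v$ to be a $K$-finite standard section supported near the relevant cell. This reduces $Z_v$ (up to a finite non-vanishing factor) to a Tate-type integral $\int_{F_v^\times} |a|^{s-k}\,\phi(a)\, d^\times a$ with $\phi$ the characteristic function of $\Ov$ in the non-archimedean case, displaying the pole explicitly.

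The main technical obstacle is the careful bookkeeping of the three normalizations -- the $\delta_P^{1/2}$ factor on both sides, the doubling embedding, and the $\tfrac{1}{2}$-shift -- to confirm that the pole lands exactly at $s = k$. The conceptual point I would emphasize is that Roberts' argument in the tempered case uses only $k > 0$ and never the temperedness of $\delta_1$, so the reduction to the one-dimensional Mellin integral transfers verbatim under our weaker hypothesis. In the archimedean case I would substitute a $K$-finite Schwartz section for the compactly supported one and invoke the Casselman asymptotic expansion for matrix coefficients of Harish-Chandra modules to justify the same factorization; this is where the positivity $k > 0$ (rather than $k \geq 0$) is used, to ensure that the asymptotic leading term actually carries the Mellin pole. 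The hypothesis in Theorem \ref{T:main1} that each $\pi_v$ be bounded by some $e_v < 1$ is precisely what feeds into Section \ref{S:isometry} to guarantee that the relevant $\tau_v$ has such a leading character $|\;|^k$ with $k > 0$ available.
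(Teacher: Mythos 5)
The paper's own proof of this lemma is a short citation to Roberts' ``main lemma'' in \cite{Rob99-2}, noting that his symplectic-group computation transfers to orthogonal groups, with a pointer to \cite{Takeda} and the remark after Theorem \ref{T:main1} for an archimedean subtlety. Your sketch is a plausible reconstruction of what Roberts actually does, and your key observation --- that the argument uses only $k>0$ and never the temperedness of the other inducing data --- is exactly the point the paper exploits; so this is essentially the same approach, elaborated where the paper merely cites. One caveat worth correcting: the archimedean subtlety the paper flags is not the positivity issue you describe but a collision issue, explained in the remark following the proof of Theorem \ref{T:main1}. When $e(\delta_1)$ coincides with one of the integers $1,\dots,\frac{1}{2}\dim V_r$, the induced representation one starts from is no longer a standard module, the ``Langlands quotient'' in the sense of \cite{Paul} or \cite{Ad_Bar95} is of a different shape than the embedding of $\tau_v^\vee$ your outline assumes, and Roberts' intertwining-operator computation producing the Mellin pole does not apply directly. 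The hypothesis that $\pi_v$ be bounded by $e_v<1$ in Theorem \ref{T:main1} is precisely what forbids this collision, not merely what ensures a leading $|\;|^{k}$ with $k>0$ exists.
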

\begin{proof}
This is essentially the ``main lemma" of \cite{Rob99-2}. Although Roberts considers the zeta integral for the symplectic groups, as he mentions in the introduction of \cite{Rob99-2} it is straightforward to modify his computations to obtain the same result for the orthogonal groups. Also see \cite{Takeda} for a certain subtle issue for the archimedean place.
\end{proof}

Now we know that $\tau_v$ is the local theta lift of $\pi_v^\vee$. Then since $\pi_v$ has a non-zero theta lift to $\OO(V_r)$, so does $\pi_v^\vee$. Also if $\pi_v$ is bounded by $e_v<1$, so is $\pi_v^\vee$ by Corollary \ref{C:exponent}. Hence if $\tau_v$ is the Langlands quotient of $\delta_1\times\cdots\times\delta_t\rtimes\rho$, then $\delta_1=|\;|^{r'-r}$ by Corollary \ref{C:main_local}. So by the above lemma, we can choose $f_v$ and $\Phi_v$ so that $Z_v(s-\frac{1}{2},f_v,\Phi_v)$ has a pole at $s=r'-r$ for all $v\in S$. It is well-known that the normalized zeta integral $b^S(s-\frac{1}{2})Z(s-\frac{1}{2},f,\Phi)$ has at most a simple pole at $s=r'-r$ (this follows from \cite[Proposition 1.1.4]{Kudla-Rallis90}), and so we conclude that $L^S(s,\tau)$ has a zero of order at least $|S|-1$ at $s=r'-r$.

Therefore the first computation of $L^S(s,\tau)$ shows that it has a zero of order at most $|S|-2$ at $s=r'-r$, and the second computation shows that it has a zero of order at least $|S|-1$ at the same $s=r'-r$, which is a contradiction. Hence $\Theta_r(V_\pi)\neq 0$.

Now assume further that $L^S(s,\pi)$ has a pole at $s=1$, but for the sake of contradiction assume that the theta lift $\Theta_{r-1}(V_\pi)$ to $\OO(V_{r-1},\A)$ vanishes. Then by what we have shown we know that the theta lift $\Theta_{r}(V_\pi)$ to $\OO(V_r,\A)$ is non-zero cuspidal. Hence if $\tau$ is an irreducible constituent, we have
\[
    L^S(s,\tau)=\zeta^S(s)L^S(s,\pi, \chi).
\]
Since $\zeta^S(s)$ has a pole at $s=1$, if $L^S(s,\pi, \chi)$ has a pole at $s=1$, then $L^S(s,\tau)$ would have at least a double pole at $s=1$. But it is known that the poles of the standard $L$-function of the orthogonal groups are at most simple \cite[Theorem 2.0.1]{Kudla-Rallis90}. Hence $\Theta_r(\pi)\neq 0$.
\end{proof}

\begin{rmk}
Let us take this opportunity to point out a mistake in the previous work \cite{Takeda} by the author. In the non-vanishing theorem \cite[Theorem 1.1]{Takeda} we claimed that the temperedness assumption for the archimedean place can be removed from the theorem by Roberts \cite{Rob99-2}. But this is a mistake. This is due to my misunderstanding on what is meant by ``Langlands quotient" in \cite{Paul} and \cite{Ad_Bar95}. For example, for a standard module $\delta_1\times\cdots\delta_t\rtimes\tau$ with $e(\delta_1)=1$, consider the induced representation $|\;|\times\delta_1\times\cdots\delta_t\rtimes\tau$. This is not a standard module anymore, but it does have a unique irreducible quotient. And in \cite{Paul} and \cite{Ad_Bar95}, this quotient is also called the Langlands quotient. However, this quotient is not given as the image of the desired intertwining operator discussed in detail in \cite{Takeda}. Rather in our sense of ``Langlands quotient", this unique quotient is the Langlands quotient of $\delta'_1\times\cdots\times\delta_t\rtimes\tau$, where $\delta'_1=\Ind_{P^{\GL}_{1,n_1}}^{\GL(n_1+1)}|\;|\otimes\delta_1$. Hence we cannot apply Roberts' computation in \cite{Rob99-2} to this case. Indeed, we need to exclude cases like this. But otherwise our argument in \cite{Takeda} works, and then instead of the boundedness assumption as in the above theorem, it is sufficient to assume that, say for the orthogonal-to-symplectic case, for the archimedean $v$, if $\sigma_v$ is the Langlands quotient of $\delta_1\times\cdots\delta_t\rtimes\tau$, then
\[
    \{e(\delta_1),e(\delta_2),\dots,e(\delta_t)\}\cap\{1,2,\dots,\frac{1}{2}\dim V_r\}=\emptyset.
\]
Similarly, we can also replace the boundedness assumption by an analogous assumption for the symplectic-to-orthogonal case. However, to do this does not seem to have any merit but simply to complicate the statement of the theorem, and so we state our theorem keeping the assumption for the archimedean place same as that of the non-archimedean one.
\end{rmk}

\quad\\


\section{on the global theta lift for similitude groups}\label{S:similitude}

In this section, we apply our non-vanishing result in the previous section to small rank similitude groups. In particular we consider the theta lift from $\GO(V)$ with $\dim V=4$ to $\GSp(4)$, and the one from $\GSp(4)$ to $\GO(V_D)$ with $V_D=D\oplus\H$ where $D$ is a (possibly split) quaternion algebra over $F$ and $\H$ is the hyperbolic place. Hence in this section we consider similitude theta lifting. References for the theory of similitude theta lifting are abound by now (\cite{HST,Rob01,GT1} and citations therein), and so rather than repeating the detail of this theory here, we will refer the reader to those references.

Now first let us note the following lemma, which will be necessary for our applications.

\begin{lemma}\label{L:bound_restriction}
Let $(G, H)$ be the pair $(\GSp(2n), \Sp(2n))$ (resp. $(\GO(V_r), \OO(V_r))$), and  $\hat{\pi}\in\Irr(G)$ be an irreducible admissible representation of $G$ which is the Langlands quotient of the standard module $\delta_1\times\cdots\times\delta_t\rtimes\tau$. Then every constituent $\pi$ of the restriction $\hat{\pi}|_{H}$ is bounded by $e(\delta_1)$.
\end{lemma}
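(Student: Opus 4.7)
The plan is to reduce the statement on the similitude side to Proposition~\ref{P:exponent} on the isometry side (together with the corollary identifying the smallest bound for the exponents), by restricting the standard module of $G$ to $H$. The key geometric observation is that the similitude character of $G$ is already detected inside the $\GL(n_1)\times\cdots\times\GL(n_t)$ block of the standard Levi $\hat{M}=\GL(n_1)\times\cdots\times\GL(n_t)\times\hat{G}'$ attached to the standard module $\delta_1\times\cdots\times\delta_t\rtimes\tau$, so that the associated parabolic $\hat{P}$ satisfies $G=\hat{P}\cdot H$. Because there is then a single $\hat{P}$--$H$ double coset, a routine Mackey computation yields
\[
    \bigl(\Ind_{\hat{P}}^{G}(\delta_1\otimes\cdots\otimes\delta_t\otimes\tau)\bigr)|_{H}
    \;\cong\;
    \Ind_{P}^{H}\bigl(\delta_1\otimes\cdots\otimes\delta_t\otimes\tau|_{H'}\bigr),
\]
where $P=\hat{P}\cap H$ is the standard parabolic of $H$ with Levi $M=\GL(n_1)\times\cdots\times\GL(n_t)\times H'$, and $H'=\hat{G}'\cap H$ is the isometry group of lower rank.

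Next, since $\tau$ is tempered on $\hat{G}'$ and the index $[\hat{G}':H'\cdot Z(\hat{G}')]$ is finite over any of our local fields, the restriction $\tau|_{H'}$ decomposes as a finite direct sum $\bigoplus_{j}\tau_j$ of tempered representations of $H'$. The restricted induced representation above therefore splits as
\[
    \bigoplus_{j}\bigl(\delta_1\times\cdots\times\delta_t\rtimes\tau_j\bigr),
\]
each summand being a standard module of $H$ with the same leading exponent $e(\delta_1)$.

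Finally, any irreducible constituent $\pi$ of $\hat{\pi}|_{H}$ must be a constituent of one of these standard modules of $H$. By Proposition~\ref{P:exponent} its exponents are then bounded by $e(\delta_1)$, and by the last corollary of Section~\ref{S:exponent}, which identifies the leading exponent of the Langlands data of $\pi$ as the smallest bound for its exponents, the leading exponent $e(\delta'_1)$ in the Langlands data of $\pi$ satisfies $e(\delta'_1)\leq e(\delta_1)$, i.e. $\pi$ is bounded by $e(\delta_1)$.

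The only delicate point I expect is the finite decomposition of $\tau|_{H'}$ into tempered pieces; once this and the decomposition $G=\hat{P}\cdot H$ are in place, the argument is a direct pullback of the isometry-group results via the induced representation above. The archimedean case is handled by the same scheme, invoking the standard archimedean analogues of Proposition~\ref{P:exponent} and its corollary in place of the non-archimedean ones.
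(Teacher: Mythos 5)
Your proof is correct and follows essentially the same route as the paper: restrict the standard module to $H$ via the single-coset decomposition $G=\hat P\cdot H$, split $\tau|_{H'}$ into finitely many tempered summands $\tau_j$, and observe that every constituent of $\hat\pi|_H$ sits inside some standard module $\delta_1\times\cdots\times\delta_t\rtimes\tau_j$ of $H$ with the same leading exponent. The one place where the paper is slightly more economical is the final inference: it notes that $\pi$ is in fact a \emph{quotient} of $\delta_1\times\cdots\times\delta_t\rtimes\tau_j$ (being a summand of $\hat\pi|_H$, itself a quotient of the restricted standard module), hence is its Langlands quotient, hence is bounded by $e(\delta_1)$ directly by definition — avoiding the detour through Proposition~\ref{P:exponent} and its corollary and thereby sidestepping the need to invoke archimedean analogues of those non-archimedean statements.
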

\begin{proof}
Let $H'$ be $\Sp(2(n-n_1-\cdots-n_t))$ (resp. $\OO(V_{r-n_1-\cdots-n_t})$). Also let us write $\tau|_{H'}=\tau_1\oplus\cdots\oplus\tau_k$, where each $\tau_i$ is tempered. Then we have $(\delta_1\times\cdots\times\delta_t\rtimes\tau)|_{H}=\bigoplus_{i=1}^{k}\delta_1\times\cdots\times\delta_t\rtimes\tau_i$. Hence if $\pi$ is a constituent of $\hat{\pi}|_{H}$, it is a quotient of some $\delta_1\times\cdots\times\delta_t\rtimes\tau_i$. So the lemma follows.
\end{proof}


\subsection{The theta lift from $\GO(4)$}

We consider the theta lift from $\GO(V)$ with $\dim V=4$. The main object here is to remove the temperedness assumption present in the work by Roberts \cite{Rob01}. However we have to mention that such result has been already proven by Gan and Ichino in their recent preprint \cite{Gan_Ichino} by an entirely different method. But we will demonstrate our method, which is a direct generalization of \cite{Rob01}, also works.

We start with classifying representations on $\GO(V)$. The proofs are found in \cite{Rob01}, \cite{HST}, \cite{Takeda} or citations
therein.\\

\noindent\underline{\textbf{The local case:}} Let us first consider the local case and so $F$ will be a local field and all the groups will be over $F$. Let $d=\disc V$ be the discriminant of $V$. The groups $\GSO(V)$ depend on $d$ as follows.
\begin{enumerate}[(1)]
\item If $d=1$, then $\GSO(V)\cong \GSO(D)$, where $D$ is a (possibly split) quaternion algebra over $F$
made into a quadratic form in the usual way. So $D$ is either the unique division quaternion algebra or the space $M_2$ of $2\times 2$ matrices. Then there is a natural
bijection between $\Irr(\GSO(V),\omega)$ and the set of irreducible
admissible representations $\pi_1\otimes\pi_2$ of $D^{\times}\times D^{\times}$ such that
both $\pi_1$ and $\pi_2$ have the same central character $\omega$.
In this case, we write $\tau=\tau(\pi_1,\pi_2)$.

\item If $d\neq 1$, then $\GSO(V)\cong\GSO(V_E)$, where $V_E=\{x\in\M(E)| \ ^cx^t=x\}$
is the space of Hermitian matrices over $E=F(\sqrt{d})$ with the
quadratic form given by $-\det$. Then there is a natural bijection
between $\Irr(\GSO(V),\omega)$ and $\Irr(\GL(2,E),\omega\circ N^E_F)$.
In this case, we write $\tau=\tau(\pi,\omega)$.
\end{enumerate}

Notice that $\GO(V)\cong\GSO(V)\rtimes\{1,t\}$, where we choose $t$ to act on $V$ as the matrix transpose if $V=M_2$ or $V=X_E$ and the quaternion conjugation if $V=D$. For each $\tau\in\Irr(\GSO(V))$, we define $\tau^c$ by taking $V_{\tau^c}=V_{\tau}$ and by letting $\tau^c(g)f=\tau(tgt)f$ for all
$g\in \GSO(V)$ and $f\in V_{\tau}$.

Then we have
\begin{enumerate}[$\bullet$]
\item If $\tau\ncong\tau^c$, then $\Ind_{\GSO(V)}^{\GO(V)}\tau$ is irreducible,
and we denote it by $\tau^{+}$.

\item If $\tau\cong\tau^c$, then $\Ind_{\GSO(V)}^{\GO(V)}\tau$ is reducible. Indeed,
it is the sum of two irreducible representations, and we write
$\Ind_{\GSO(V)}^{\GO(V)}\tau\cong\tau^{+}\oplus\tau^{-}$. Here, $t$
acts on $\tau^{\pm}$ via a linear operator $\theta^{\pm}$ with the
property that $(\theta^{\pm})^2=\Id$ and $\theta^{\pm}\circ
g=tgt\circ\theta^{\pm}$ for all $g\in \GSO(V)$.
\end{enumerate}

We can be more explicit about the irreducible components $\tau^{+}$
and $\tau^{-}$. First assume $d=1$. In this case, it is easy to see
that, via $\rho$, $t$ acts on $\GL(2)\times\GL(2)$ or
$D^{\times}\times D^{\times}$ by $t\cdot(g_1,g_2)=(g_2,g_1)$, and if
$\tau=\tau(\pi_1,\pi_2)$ is such that $\tau\cong\tau^c$, then
$\pi_1\cong\pi_2$. We can choose $\theta^{\pm}$ to be such
that $\theta^{+}(x_1\otimes x_2)=x_2\otimes x_1$ and
$\theta^{-}(x_1\otimes x_2)=-x_2\otimes x_1$ for $x_1\otimes x_2\in
{\pi_1\otimes\pi_2}$. We choose $\tau^{+}$ and $\tau^{-}$
accordingly. Note that our choice of $\tau^+$ is the \emph{preferred} extension defined in
\cite[p.227]{PSP}.

Next assume $d\neq1$. In this case $t$ acts, via
$\rho$, on $\GL(2,E)$ in such a way that $t\cdot g=\ ^cg$ \ie the Galois conjugation. If
$\tau=\tau(\pi,\omega)$ is such that $\tau\cong\tau^c$, then
$\pi\cong\pi^c$. Note that $\pi$ has a unique Whittaker model,
namely it is realized as a space of functions $f:GL(2,E)\ra\C$ such
that $f\left(\left(\begin{smallmatrix}1&a\\0&1\end{smallmatrix}\right)\right)=\psi_v(\tr\,
a)f(g)$ for all $a\in E$ and $g\in\GL(2,E)$, where $\psi_v$ is a
fixed additive character of $F$. Then we define $\theta^{\pm}$ to be
the linear operator that acts on this space of Whittaker functions
by $f\mapsto \pm f\circ c$, and $\theta^{+}$ is chosen to be the one
that acts as $f\mapsto f\circ c$. We choose $\tau^{+}$ and $\tau^{-}$
accordingly.

We should note that our choice of $\tau^{+}$ and $\tau^{-}$ is
different from that of Roberts in \cite{Rob01}, but rather we follow
\cite{HST}. The reason we make this choice is because it is consistent with Proposition \ref{P:global_disconnected} below. However as we will show at the end of this subsection (Proposition \ref{P:local_extension}), it turns out that those two choices indeed coincide. Also the reader should notice that in the above discussion the fields $F$ and $E$ do not have to be non-archimedean.\\

\noindent\underline{\textbf{The global case:}} Now we consider the global case, and hence here we assume that $F$ is a global field of $\ch F =0$ and the groups are over $F$. If $d\neq 1$ and
$E=F(\sqrt{d})$, then let $c$ be the non-trivial element in
$\text{Gal}(E/F)$. For each quaternion algebra $D$ over $F$, let
$B_{D,E}=D\otimes E$. For each $g\in B_{D,E}$, we define
$^cg^{\ast}$ by linearly extending the operation $^c(x\otimes
a)^{\ast}= x^{\ast}\otimes\, ^ca$ where $^c$ is the Galois
conjugation and $^{\ast}$ is the quaternion conjugation. The
space $V_{D,E}=\{g\in B_{D,E} :\, ^cg^{\ast}=g\}$ can be made into a
four dimensional quadratic space over $F$ via the reduced norm of
the quaternion algebra $B_{D,E}$. Similarly to the local case, we have

\begin{enumerate}[(1)]
\item If $d=1$, then $\GO(V)$ is isomorphic to $\GO(D)$ for some (possibly split)
quaternion algebra over $F$. Then there is a natural bijective
correspondence between an irreducible cuspidal automorphic representation $\tau$
of $\GSO(V,\A_F)$ whose central character is
$\omega$ and an irreducible cuspidal automorphic representation
$\pi_1\otimes\pi_2$ of $\D (\A_F)\times \D(\A_F)$ such that both
$\pi_1$ and $\pi_2$ have the central character $\omega$. In this
case, we write $\tau=\tau(\pi_1,\pi_2)$.

\item If $d\neq 1$, then there exists a quaternion algebra $D$ over $F$ such
that $\GO(V)\cong \GO(V_{D,E})$. Then there is a natural bijective
correspondence between an irreducible cuspidal automorphic representation $\tau$
of $\GSO(V, \A_F) \cong\GSO(V_{D,E}, \A_F)$ whose central character
is $\omega$ and an irreducible cuspidal automorphic representation $\pi$ of
$B_{D,E}^{\times}(\A_E)$ whose central character is of the form
$\omega\circ N^E_F$. In this case, we write $\tau=\tau(\pi,\omega)$.
\end{enumerate}

We need to consider the relation between irreducible cuspidal automorphic
representations of the two groups $\GSO(V,\A_F)$ and $\GO(V,\A_F)$. Recall that as an algebraic group, $\GO(V)\cong\GSO(V)\rtimes\{1,t\}$.
First define $\tau^c$ by taking $V_{\tau^c} = \{f\circ c : f\in
V_{\tau}\}$, where $c:\GSO(V, \A_F)\ra \GSO(V, \A_F)$ is the
isomorphism given by conjugation $g\mapsto tgt$, where $t\in\GO(V,F)\backslash\GSO(V,F)$. Then clearly
$\tau^c$ is an irreducible cuspidal automorphic representation of $\GSO(V, \A_F)$.
(Note that as an admissible representation, $\tau^c$ is isomorphic to
the representation $\tau'$ with $V_{\tau'}=V_{\tau^c}$ and the action
defined by $\tau'(g)f=\tau(tgt)f$, and so if we write
$\tau\cong\otimes\tau_v$, then $\tau^c\cong\otimes\tau_v^c$.) By
multiplicity one theorem, $\tau\cong\tau^c$ implies
$V_{\tau}=V_{\tau^c}$ and in this case $f\circ c\in V_{\tau}$. Also let
$\sigma$ be an irreducible cuspidal automorphic representation of $\GO(V, \A_F)$.
Define $V_{\sigma}^{\circ}=\{f|_{\GSO(V, \A_F)} : f\in
V_{\sigma}\}$. Then either $V_{\sigma}^{\circ}=V_{\tau}$ for some irreducible
cuspidal automorphic representation $\tau$ of $\GSO(V, \A_F)$ such
that $\tau=\tau^c$, or $V_{\sigma}^{\circ}=V_{\tau}\oplus V_{\tau^c}$
for some irreducible cuspidal automorphic representation $\tau$ of $\GSO(V,
\A_F)$ such that $\tau\neq\tau^c$. (See \cite[p.381--382]{HST}.) Then we have

\begin{prop}\label{P:global_disconnected}
Define $\widehat{\sigma}$ to be the sum of all the irreducible cuspidal automorphic
representations of $\GO(V, \A_F)$ lying above $\tau$, \ie
$\widehat{\sigma}=\oplus_i\sigma_i$ where $\sigma_i$ runs over all the irreducible
cuspidal automorphic representations of $\GO(V, \A_F)$ such that
$V_{\sigma_i}^{\circ}=V_{\tau}$ if $\tau=\tau^c$, or $V_{\sigma_i}^{\circ}=V_{\tau}\oplus V_{\tau^c}$ otherwise.
Then
\[
    \widehat{\sigma}\cong\bigoplus_{\delta}\underset{v}{\otimes}\tau_v^{\delta(v)},
\]
where $\delta$ runs over all the maps from the set of all places of
$F$ to $\{\pm\}$ with the property that $\delta(v)=+$ for almost all
places of $F$, and $\delta(v)=+$ if $\tau_v\ncong\tau_v^c$, and
further if $\tau\cong\tau^c$, then $\underset{v}{\prod}\delta(v)=+$.
Moreover each $\otimes\tau_v^{\delta(v)}$ is (isomorphic to) an irreducible
cuspidal automorphic representation of $\GO(V,\A_F)$.
\end{prop}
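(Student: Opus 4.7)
The plan is to translate the classification of cuspidal automorphic summands of $\widehat\sigma$ into the problem of extending the given $\GSO(V,\A)$-module structure on the appropriate $\tau$-isotypic subspace of cusp forms to a $\GO(V,\A)$-module structure, and then to match this extension against the abstract decomposition of the induced representation place by place. Fix a representative $t_0\in\GO(V,F)\setminus\GSO(V,F)$, so that $\GO(V,\A)=\GSO(V,\A)\cdot\langle t_0\rangle$ and conjugation by $t_0$ realises the involution $c$ on $\GSO$. The subspace $V_\tau$ (when $\tau\cong\tau^c$) or $V_\tau\oplus V_{\tau^c}$ (otherwise) inside the space of cusp forms is stable under right translation by $t_0$ via the canonical operator $T_0$ defined by $f\mapsto f\circ c$, and hence acquires a $\GO(V,\A)$-module structure; its irreducible constituents are exactly the cuspidal $\sigma_i$ lying above $\tau$.

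Place by place, the local induction $\Ind_{\GSO(V,F_v)}^{\GO(V,F_v)}\tau_v$ equals $\tau_v^+$ when $\tau_v\not\cong\tau_v^c$ and equals $\tau_v^+\oplus\tau_v^-$ otherwise, with the summands distinguished by the local intertwiner $\theta_v^+$ defined just before the proposition (swap of tensor factors when $d=1$, and $f_v\mapsto f_v\circ c$ on Whittaker models when $d\neq 1$). Consequently the restricted tensor product $\bigotimes_v\Ind_{\GSO(V,F_v)}^{\GO(V,F_v)}\tau_v$ decomposes abstractly as $\bigoplus_\delta\bigotimes_v\tau_v^{\delta(v)}$, where $\delta$ runs over maps from the places of $F$ to $\{\pm\}$ which are $+$ almost everywhere and $+$ at every place with $\tau_v\not\cong\tau_v^c$; distinct $\delta$ give non-isomorphic global summands because they differ in at least one local factor. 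The task is to decide which of these abstract summands actually appear as constituents of $\widehat\sigma$: when $\tau\not\cong\tau^c$, the operator $T_0$ swaps the two distinct $\GSO$-irreducibles $V_\tau$ and $V_{\tau^c}$ and the resulting $\GO$-extension is unique up to isomorphism, so every admissible $\delta$ contributes; when $\tau\cong\tau^c$, the two possible extensions of $V_\tau$ differ by an overall sign in the action of $t_0$, and only the one realised by $T_0$ itself is automorphic.

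The key step, and the main technical obstacle, is the normalisation identity $T_0=\bigotimes_v\theta_v^+$ in the case $\tau\cong\tau^c$; once this is in place, the automorphic extension corresponds exactly to sign patterns $\delta$ with $\prod_v\delta(v)=+$, which is the constraint in the statement. To verify the identity one evaluates both sides on a distinguished vector common to the global and local pictures: when $d\neq 1$ the global Whittaker expansion factors as a product of local Whittaker functionals, and the identity follows because $\theta_v^+$ is defined on the local Whittaker model precisely by $f_v\mapsto f_v\circ c$; when $d=1$ one uses the identification $V_\tau=V_{\pi_1}\otimes V_{\pi_2}$ and matches the geometric swap induced by $c$ with the local tensor swap defining $\theta_v^+$. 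After this normalisation check the remainder reduces to bookkeeping of local signs, completing the proof.
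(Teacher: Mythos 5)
The paper itself offers no argument here: it simply cites Proposition 5.4 of \cite{Takeda}, so I am comparing your proposal against the plausible shape of that external argument and against correctness. Your high-level strategy — decompose the restricted tensor product $\bigotimes_v\Ind_{\GSO(V,F_v)}^{\GO(V,F_v)}\tau_v$ into the pieces $\bigotimes_v\tau_v^{\delta(v)}$, then decide which ones are automorphic by comparing a global intertwiner with the local normalized ones — is the right idea and is in the spirit of the HST/Roberts treatment. The decisive step you identify, the normalization $T_0=\bigotimes_v\theta_v^+$ verified on Whittaker models, is indeed the crux in the $\tau\cong\tau^c$ case.

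However there is a concrete error at the very foundation: the assertion that $\GO(V,\A)=\GSO(V,\A)\cdot\langle t_0\rangle$ is false. The quotient $\GO(V,\A)/\GSO(V,\A)$ is $\mu_2(\A)\cong\prod_v'\{\pm 1\}$, a large (uncountable, compact) group, and the image of the single rational element $t_0\in\GO(V,F)$ generates only a $\Z/2$ inside it. In particular an adelic element that lies in $\GO(V,F_v)\setminus\GSO(V,F_v)$ at exactly one place $v_0$ and in $\GSO$ everywhere else is not in $\GSO(V,\A)\langle t_0\rangle$. Because of this, declaring how $t_0$ acts on $V_\tau$ via $f\mapsto f\circ c$ does \emph{not} equip $V_\tau$ with a $\GO(V,\A)$-module structure, so the sentence ``hence acquires a $\GO(V,\A)$-module structure; its irreducible constituents are exactly the cuspidal $\sigma_i$ lying above $\tau$'' does not hold. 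Relatedly, $V_\tau$ cannot contain the $\sigma_i$ as constituents: the restriction map $V_{\sigma_i}\to V_{\sigma_i}^{\circ}=V_\tau$ is a $\GSO(V,\A)$-isomorphism, so each $V_{\sigma_i}$ is isomorphic to $V_\tau$, not a summand of it; the different $\sigma_i$ are different extensions of the same space across $\GO(V,\A)\setminus\GO(V,F)\GSO(V,\A)$. What the argument actually needs is a Mackey/Clifford-type count of $\Hom_{\GO(V,\A)}\bigl(\bigotimes_v\tau_v^{\delta(v)},\, L^2_{\mathrm{cusp}}(\GO(V,F)\backslash\GO(V,\A))\bigr)$, expressing it in terms of $\Hom_{\GSO(V,\A)}(\tau,\cdot)$ together with a $t_0$-fixed-vector condition; it is precisely in evaluating that condition that the identity $T_0=\bigotimes_v\theta_v^+$ produces the constraint $\prod_v\delta(v)=+$. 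Your treatment of the $\tau\not\cong\tau^c$ case has the same gap: ``the resulting $\GO$-extension is unique up to isomorphism'' does not by itself show that every admissible $\delta$ actually occurs in the cuspidal spectrum — here too a multiplicity computation is required, with the answer being that the unconstrained set of $\delta$ appears because $\tau$ and $\tau^c$ contribute two independent $\GSO$-isotypic spaces.
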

\begin{proof}
This is Proposition 5.4 of \cite{Takeda}.
\end{proof}

This proposition tells us that if $\tau$ is an irreducible cuspidal automorphic
representation of $\GSO(V, \A_F)$ and  $\delta$ is a map from the set of all places of
$F$ to $\{\pm\}$ having the property described in the above
proposition, then there is an irreducible cuspidal automorphic representation
$\sigma=(\tau, \delta)$ of $\GO(V, \A_F)$ lying above $\tau$ such that
$\sigma\cong\otimes\tau_v^{\delta(v)}$. We call such a map $\delta$ an
``extension index'' of $\tau$, and $(\tau, \delta)$ the extension of
$\tau$ with an extension index $\delta$.

Once we have this classification of representations of $\GO(V)$, the content of Theorem \ref{T:main2} can be understood to the reader. To prove Theorem \ref{T:main2}, we need the following lemma.
\begin{lemma}
Let $\sigma_v$ be an infinite dimensional unitary irreducible admissible representation of $\GO(V_v)$ with $\dim\sigma_v>1$, where $\dim V_v=4$. Then $\sigma_v$ is always bounded by some $e_v<1$. Here $v$ is not necessarily non-archimedean.
\end{lemma}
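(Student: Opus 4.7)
The plan is to reduce the boundedness of $\sigma_v$ to a statement about the classical unitary dual of $\GL(2)$ over a local field, via the structure theorems for $\GSO(V_v)$ and $\GO(V_v)$ just recalled.

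First, since $\GSO(V_v)$ has index two in $\GO(V_v)$, an argument parallel to Lemma~\ref{L:bound_restriction} identifies the Langlands quotient data, and therefore the bound, of $\sigma_v$ with that of any irreducible constituent $\tau_v$ of the restriction $\sigma_v|_{\GSO(V_v)}$. The classification then writes $\tau_v=\tau(\pi_1,\pi_2)$ with $\pi_i\in\Irr(D_v^\times)$ of common central character when $d_v=\disc V_v=1$, or $\tau_v=\tau(\pi,\omega)$ with $\pi\in\Irr(B_{D,E_v}^\times)$ when $d_v\neq 1$; unitarity of $\sigma_v$ passes to the underlying $\pi_i$ or $\pi$.

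Next I would eliminate the non-split inner forms. If $D_v$ is the non-split quaternion algebra over $F_v$, then $D_v^\times$ is compact modulo its center and every irreducible smooth representation is finite-dimensional, so $\tau(\pi_1,\pi_2)$ and hence $\sigma_v$ would be finite-dimensional; the same argument applies when $d_v\neq 1$ and $B_{D,E_v}$ is a division algebra over $E_v$. The infinite-dimensionality of $\sigma_v$ therefore forces the underlying representation to live on $\GL(2,F_v)$ or $\GL(2,E_v)$. From this point the Langlands data of $\tau_v$ on $\GSO(V_v)$ can be read off from those of the $\GL(2)$-factors: each $e$-value of $\tau_v$ matches an $e$-value of $\pi_i$ or $\pi$, up to the valuation conversion $|\cdot|_{E_v}\leftrightarrow|\cdot|_{F_v}$ arising in the quadratic extension case.

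I would finish by invoking the classical unitary dual of $\GL(2)$ over a local field, which consists of tempered representations (bound $0$), complementary series with parameter $s\in(0,1/2)$ (bound $s<1/2$), and one-dimensional unitary characters $\chi\circ\det$ (bound $1/2$). In the $d_v=1$ case every $e(\pi_i)\leq 1/2<1$; in the $d_v\neq 1$ case a one-dimensional $\pi$ would give a finite-dimensional $\sigma_v$, so $\pi$ must be tempered or complementary series with $s<1/2$, and even after possible doubling of $e$-values coming from the quadratic extension the strict inequality $s<1/2$ gives a bound still strictly less than $1$. The main technical point I anticipate is the careful comparison of parabolics, split tori and valuations under the isomorphism of $\GSO(V_v)$ with its $\GL(2)$-avatar, particularly in the ramified quadratic case; but since the inequality we need is strict, no numerical slack is required, and $\sigma_v$ is bounded by some $e_v<1$.
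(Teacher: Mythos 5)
Your high-level strategy---restrict to $\GSO(V_v)$, reduce to the $\GL(2)$-factors, eliminate the anisotropic inner forms by finite-dimensionality, then invoke the unitary dual of $\GL(2)$---is the same route the paper takes, and the supporting reductions (the index-two restriction analogous to Lemma~\ref{L:bound_restriction}, the finite-dimensionality of representations of compact-mod-center groups) are fine. There is, however, a genuine error in how you transport the Langlands exponents across the isomorphism $\rho:\GL(2)\times\GL(2)/\Delta\Gm\cong\GSO(2,2)$ in the $d_v=1$ case.

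You assert that ``each $e$-value of $\tau_v$ matches an $e$-value of $\pi_i$,'' and then conclude from $e(\pi_i)\leq\tfrac12$ that $\sigma_v$ is bounded below $1$. But under $\rho$ the Borel subgroups correspond in such a way that the $\GL(1)$ in the Levi of the maximal parabolic of $\GSO(2,2)$ sees a cocharacter that is not confined to a single $\GL(2)$-factor; the diagonal entries of the image of $\left(\begin{smallmatrix}a&\ast\\0&c\end{smallmatrix}\right)\times\left(\begin{smallmatrix}a'&\ast\\0&c'\end{smallmatrix}\right)$ are $aa',ac',cc',a'c$, so the exponents of $\tau(\pi_1,\pi_2)$ are sums of exponents from the two factors. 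In other words, if $\pi_i$ is bounded by $e_i$ then $\tau$ (and hence $\sigma$) is bounded by $e_1+e_2$, not by $\max(e_1,e_2)$. Your claimed bound $\leq\tfrac12$ is therefore false in general; for instance two complementary series with $s_1,s_2$ near $\tfrac12$ yield a $\tau$ whose leading exponent is near $1$. The correct conclusion still holds, but for a different reason: the hypothesis $\dim\sigma_v>1$ rules out $\pi_1$ and $\pi_2$ both being one-dimensional, so at least one of them is infinite-dimensional unitary with $e_i<\tfrac12$ strictly, while the other satisfies $e_j\leq\tfrac12$, giving $e_1+e_2<1$. You do acknowledge a ``doubling'' of exponents in the $d_v\neq1$ case coming from the norm map, which is the analogous phenomenon there, but you apply no such correction in the split case, and the cases where both $\pi_i$ approach the edge of the complementary series are exactly where your argument, taken at face value, fails to reach the conclusion.

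A second, smaller omission: your phrase ``every $e(\pi_i)\leq\tfrac12<1$'' glosses over the degenerate situation where both $\pi_i$ are one-dimensional (giving $e_1+e_2=1$ exactly); you need to invoke $\dim\sigma_v>1$ here to exclude it, which is precisely why that hypothesis appears in the statement. Once the additive behavior of the exponents under $\rho$ is established and this degenerate case is excluded, the proof closes as in the paper.
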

\begin{proof}
In this proof, we suppress the subscript $v$ and simply write $\sigma$, $V$, etc. First note that since $\sigma$ is infinite dimensional, $V$ can not be anisotropic. Assume $d=1$. Then $\sigma=\tau^\delta$ where $\delta=+$ or $-$ and $\tau=\tau(\pi_1,\pi_2)$. If each $\pi_i$ is bounded by $e_i$, one can see that $\tau$ and hence $\sigma$ are bounded by $e_1+e_2$. This requires a case-by-case classification of the representations of $\GL(2)$ along with the explicit isomorphism
\[
    \rho:\GL(2)\times\GL(2)/\Delta\Gm\cong\GSO(2,2).
\]
Under this isomorphism, we have the correspondence of, say, Borel subgroups as
\[
    \rho(\begin{bmatrix}a&\ast\\0&c\end{bmatrix},\begin{bmatrix}a'&\ast\\0&c'\end{bmatrix})
    =\begin{bmatrix}aa'&\ast&\ast&\ast\\0&ac'&\ast&\ast\\0&0&cc'&\ast&\\0&0&0&a'c\end{bmatrix}.
\]
(For this, see the proof of Lemma 8.1 of \cite{Rob01}.) From this the assertion is obvious if both $\pi_1$ and $\pi_2$ are the Langlands quotients of representations induced from the Borel subgroup. Indeed, the spherical case is essentially proven in \cite[Lemma 8.1]{Rob01} in detail. The other cases can be similarly checked, but since the computation is fairly elementary, the detail is left to the reader.

Now if both of $\pi_i$ are infinite dimensional, then since $\sigma$ is unitary, so are $\pi_i$, which implies $e_i<\frac{1}{2}$. If one of $\pi_i$, say $\pi_2$, is one dimensional, then $\pi_2$ is bounded by $\frac{1}{2}$, and also $e_1<\frac{1}{2}$. In either way, $\sigma$ is bounded by $e_1+e_2<1$.

Similarly if $d\neq 1$ and so $\sigma=\tau^\delta$, where $\delta=+$ or $-$ and $\tau=\tau(\pi,\omega)$, then one sees that if $\pi$ is bounded by $e$, then $\sigma$ is bounded by $2e$. (Again see Lemma 8.1 of \cite{Rob01} for the spherical case. The non-spherical case can be derived by considering the explicit correspondence of the parabolic subgroups of $\GL(2,E)$ and $\GSO(V)$. The detail is left to the reader.) And the unitarity implies $e<\frac{1}{2}$ and so $2e<1$.
\end{proof}

Now we are read to prove Theorem \ref{T:main2}

\begin{proof}[Proof of Theorem \ref{T:main2}]
(1). Assume that each local constituent $\sigma_v$ has a non-zero theta lift to $\GSp(4, F_v)$. Let $\sigma'$ be an irreducible constituent of $\sigma|_{\OO(V, \A)}$ (restriction of automorphic forms). It is by now well-known that the standard $L$-function $L^S(s, \sigma')$ of $\sigma'$ is $L^S(s,\pi_1\times\pi_2)$ if $d=1$ and $\tau=\tau(\pi_1,\pi_2)$, or $L^S(s,\pi,\omega^{-1}, Asai)$ if $d\neq 1$ and $\tau=\tau(\pi,\omega)$. In either way, it does not vanish at $s=1$. Also by the above lemma together with Lemma \ref{L:bound_restriction}, we know that each $\sigma'_v$ is bounded by some number less than $1$. (Note that if $V_v$ is anisotropic, clearly $\sigma'_v$ is bounded by less than one.) Moreover by \cite[Lemma 4.2]{Rob96} $\sigma'_v$ has a non-zero theta lift to $\Sp(4, F_v)$. So by the main global non-vanishing result of the previous section, the global theta lift of $\sigma'$ to $\Sp(4, \A)$ is non-vanishing and hence $\Theta_2(\sigma)\neq 0$. The converse is clear.

(2). Let $\sigma'$ be as above. It is again well-known that
\[
     L^S(s, \sigma')=L^S(s+\frac{1}{2},\pi_1\otimes\chi)L^S(s-\frac{1}{2},\pi_1\otimes\chi).
\]
By the same reasoning as above, one sees that $\sigma'_v$ is bounded by some number less than one. Hence just as above we see that $\Theta_2(\sigma)\neq 0$.
\end{proof}

\begin{rmk}
As we mentioned before, part (1) of the theorem has been already proven by Gan and Ichino in their recent preprint \cite{Gan_Ichino} by an entirely different method. For part (2), the same statement is also proven by their method. See (\cite{GT_Siegel} also.) We believe that the converse is also true, and indeed in \cite{GT_Siegel} we show it for the case where the central character of $\pi_1$ is trivial. Also we should mention that for the case where $\pi_1$ has the trivial central character is treated by Schmidt \cite{Schmidt05} in great detail in the context of the Saito-Kurokawa lifting.
\end{rmk}

As the last thing in this subsection, we will prove the following, which essentially shows that our choice of $\tau^+$ and $\tau^-$ coincides with that of Roberts in \cite{Rob01}. Namely we have

\begin{prop}\label{P:local_extension}
Let $F$ be a (not necessarily non-archimedean) local field of $\ch F=0$. Let $\tau\in\Irr(\GSO(V, F))$ be such that $\tau^c\cong\tau$, and so if $d=1$ then $\tau=\tau(\pi,\pi)$, and if $d\neq 1$ then $\tau=\tau(\pi,\omega)$ with $\pi^c=\pi$. Then
\begin{enumerate}[(1)]
\item If $d=1$, then only $\tau^+$ has a non-zero theta lift to $\GSp(4, F)$.
\item If $d\neq 1$, then
\begin{enumerate}[(a)]
\item if $\pi$ is the base change lift of some $\tau_0$ on $\GL(2, F)$ whose central character is $\omega$, both $\pi^+$ and $\pi^-$ have non-zero theta lifts to $\GSp(4,F)$.
\item if $\pi$ is the base change lift of some $\tau_0$ on $\GL(2, F)$ whose central character is $\omega\chi_{E/F}$, then only $\pi^+$ has a non-zero theta lift to $\GSp(4,F)$.
\end{enumerate}
\end{enumerate}
\end{prop}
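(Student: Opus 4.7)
The plan is to verify that our explicit choice of $\tau^+$ (the swap-symmetric extension when $d=1$, and the Whittaker-model extension $f\mapsto f\circ c$ when $d\neq 1$) coincides with Roberts' choice in \cite{Rob01}, which is characterized intrinsically as the extension having a non-zero theta lift to $\GSp(4,F)$. Thus it suffices to show directly that, under our definition, $\tau^+$ always has a non-zero theta lift to $\GSp(4,F)$, while $\tau^-$ has a non-zero lift in case (2a) but not in case (1) or (2b). The proof will proceed by analyzing how the involution $t$ acts on the space of $\Sp(4,F)$-intertwiners from the Weil representation to $\tau\otimes\pi'$ (where $\pi'$ is the isometry theta lift of $\tau$), and checking in each case whether this action is by $+1$ only or by both signs.

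For case (1), with $d=1$ and $V=D$, the involution $t$ acts on $\GSO(V)\cong(\D\times\D)/\Gm$ by swapping the two factors, so the question becomes whether the theta kernel intertwiner for $\tau=\tau(\pi,\pi)$ is symmetric or anti-symmetric under this swap. I would use the mixed Schr\"odinger model for $\omega_{V,2}$ adapted to a polarization of the symplectic space, together with the seesaw
\[
\begin{array}{ccc}\OO(D)&&\Sp(2)\times\Sp(2)\\ \cup& &\cup\\ \OO(2)\times\OO(2)&&\Sp(4)\end{array}
\]
(which realizes the $D^\times\times D^\times$-action through the embedding $\GSO(D)\hookrightarrow\GO(D)$), to identify the intertwiner with the natural pairing $V_\pi\otimes V_\pi\to\C$. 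Since this pairing is symmetric in its two arguments, $t$ acts by $+1$, so only $\tau^+$ in our sense lifts. The analysis also covers Roberts' computation in \cite[Lemma 8.1]{Rob01} that underlies our Lemma on boundedness.

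For case (2), with $d\neq 1$ and $\GSO(V)\cong\GSO(V_{D,E})$, the involution $t$ corresponds to Galois conjugation, and our $\theta^+$ acts on the Whittaker model of $\pi$ by $f\mapsto f\circ c$. Here I would invoke the well-known dichotomy for the theta correspondence of unitary-similitude groups: the Weil representation factors through an embedding into a unitary-group theta kernel and the non-vanishing of the lift is controlled by the central character of the base-change datum $\tau_0$. Writing the theta integral against a Whittaker vector of $\pi$ and tracking the Galois action, one finds that the equivariance property of $\theta^+$ forces $\pi^+$ to lift precisely when the character $\omega_{\tau_0}$ satisfies one of the two dichotomy conditions, while $\pi^-$ lifts only in the ``matching'' case $\omega_{\tau_0}=\omega$. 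This gives both extensions in (2a) and only $\pi^+$ in (2b).

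The main technical obstacle will be the bookkeeping of the various identifications, in particular tracking how the explicit $\theta^\pm$ we defined on the Whittaker model, respectively on $V_\pi\otimes V_\pi$, corresponds to the action of $t$ on the image of the theta kernel in $\tau\otimes\pi'$. A secondary subtlety is that in case (2) one must use the unitary-group dichotomy at the correct level of similitudes: the characters $\omega$ and $\omega\chi_{E/F}$ parametrize the two inequivalent hermitian-space theta lifts, and matching them to the $\pm$ eigenspaces of $t$ on the intertwiner space is where the central-character condition in (2a) versus (2b) enters. Once these equivariances are pinned down, the uniqueness of the local theta lift (for these small-rank similitude pairs) together with the classification of $\Irr(\GO(V))$ recalled above forces Roberts' $\tau^+$ and ours to agree.
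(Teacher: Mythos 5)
Your high-level strategy for part (1) — reducing to the question of which eigenvalue $\delta\in\{\pm\}$ the involution $t$ acts by on the relevant intertwiner, and then identifying that intertwiner with a pairing $V_\pi\otimes V_\pi\to\C$ — is indeed the idea behind Roberts' lemma that the paper uses. But you then assert ``Since this pairing is symmetric in its two arguments, $t$ acts by $+1$,'' and this is precisely the step that carries all the difficulty. The pairing is not a bilinear form on $V_\pi$ in the naive sense; it is built from a fixed isomorphism $R\colon (\omega_\pi^{-1}\otimes\pi, V_\pi)\xrightarrow{\ \sim\ } (\pi^\vee, V_\pi^\vee)$ composed with the canonical evaluation $V_\pi\otimes V_\pi^\vee\to\C$, and its symmetry or antisymmetry is exactly the sign $\delta$ one is trying to determine. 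In the paper this sign is computed only after genuinely concrete work: in the split case one passes to the Kirillov model and invokes the explicit Jacquet--Langlands description of the pairing (Theorem 2.18(i) of [JL]); and in the anisotropic case with $\dim\pi>1$ the paper explicitly confesses that it cannot produce a local proof of symmetry and instead resorts to a \emph{global} argument (embedding $\pi$ as a local component of an automorphic $\Pi$ on $D^\times(\A)$, using the global Jacquet--Langlands theta lift and the parity constraint $\prod_v\delta(v)=+$ from Proposition \ref{P:global_disconnected} to pin down $\delta(v_0)$). So your plan, as written, circular-argues past the crux. Also, the seesaw you drew has the inclusion on the $\Sp$-side pointing the wrong way and does not match the embedding of $D^\times\times D^\times$ actually needed.

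For part (2), your proposed ``unitary-group dichotomy'' argument is not the route the paper takes: the paper simply cites Gan--Ichino (Theorem A.11) in the non-archimedean case and Harris--Soudry--Taylor (Lemma 12) in the archimedean case. Your sketch is plausible in spirit — the dichotomy between $\omega$ and $\omega\chi_{E/F}$ does underlie the cited results — but as stated it leaves the key equivariance computation (``tracking the Galois action'' to match $\theta^\pm$ with the two Hermitian-space lifts) entirely unspecified, so it does not amount to a proof either. In short: the overall framework is right, but the central symmetry claim in (1) is unproven (and non-trivial), and (2) is sketched too loosely to stand on its own.
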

\begin{proof}
Note that if $F$ is non-archimedean, then part (2) has been proven by Gan and Ichino \cite[Theorem A.11]{Gan_Ichino}. If $F$ is archimedean, then for part (2), $F$ is always $\R$, and the corresponding statement is \cite[Lemma 12]{HST}.

Hence we only have to consider part (1), and so let us assume $d=1$. If $F=\R$ and $V$ is split, then the corresponding statement can be read off from the work of Przebinda \cite{Przebinda}. To be more specific, the restriction of $\tau^+$ to $\OO(2)\times\OO(2)\subseteq\GO(2,2)$ contains the representation of the form $((k)\otimes triv)\oplus(triv\otimes (k))$ for some $k$, where $(k)$ is the representation of $\OO(2)$ with weight $k$ and $triv$ is the trivial representation of $\OO(2)$. Note that $(k)\otimes triv$ (resp. $(triv\otimes (k)$) is what Przebinda means by $\pi_{k,0}^0$ (resp. $\pi_{0,k}^0$) in (2.1.6) of \cite{Przebinda}. Namely in the language of Przebinda $\tau^+$ contains the representations with the lowest degree $K$-types $\pi_{k,0}^0$ and $\pi_{0,k}^0$. (Also note that the restriction of $\tau^-$ to $\OO(2)\times\OO(2)$ contains $((k)\otimes\det)\oplus(\det\otimes (k))$, which is $\pi_{k,0}^1\oplus\pi_{0,k}^1$.) Hence the restriction $\tau^+|_{\OO(2,2)}$ contains a constituent which contains the representation with the lowest degree $K$-type $\pi_{k,0}^0$. Then (3.2.4) and (3.4.31) of \cite{Przebinda} imply that this constituent has a nonzero theta lift to $\Sp(4,\R)$ and hence $\tau^+$ has a nonzero theta lift to $\GSp(4,\R)$. (The author would like to thank A. Paul for checking this. See \cite{Paul2}.) If $F=\C$, this can be similarly read off from \cite[Theorem 2.8]{Ad_Bar95}.

Now the only remaining cases are when $F$ is non-archimedean and $V$ is split, and $F$ is non-archimedean or real and $V$ is anisotropic. In either case we can write $\tau=\tau(\pi,\pi)$, where $\pi$ is a representation of possibly split $\D$. Assume $F$ is non-archimedean. Then we need the following lemma which is due to Roberts.
\begin{lemma}
Let $(\pi, V_{\pi})$ be an irreducible admissible representation of possibly split $\D$. Then $(\omega_{\pi}^{-1}\otimes\pi, V_{\pi})$ is known to be isomorphic to the contragredient $(\pi^\vee, V_{\pi}^\vee)$, and let us fix an isomorphism $R:(\omega_{\pi}^{-1}\otimes\pi, V_{\pi})\rightarrow (\pi^\vee, V_{\pi}^\vee)$. Let $L:V_{\pi}\otimes V_{\pi}\rightarrow \mathbf{1}$ be the composition of $1\otimes R:V_{\pi}\otimes V_{\pi}\rightarrow V_{\pi}\otimes V_{\pi}^\vee$ with the canonical map $V_{\pi}\otimes V_{\pi}^\vee\rightarrow \mathbf{1}$. If $\tau=\tau(\pi,\pi)$, then $\tau^\delta$ has a non-zero theta lift to $\GSp(4)$ if and only if for all $v,w\in V_{\pi}$ we have $L(v\otimes w)=\delta L(w\otimes v)$.
\end{lemma}
Assume $D$ is split and so $\pi$ is a representation of $\GL(2)$. If $\pi$ is finite dimensional and so it is indeed one dimensional, then clearly $\delta=+$ in the above lemma, and hence $\tau^+$ has a non-zero theta lift. So assume $\pi$ is infinite dimensional. Then $V_{\pi}$ can be identified with the space of the Kirillov model. Then the pairing  $L:V_{\pi}\otimes V_{\pi}\rightarrow \mathbf{1}$ has been explicitly described by Jacquet-Langlands. (See the proof of part (i) of Theorem 2.18 in \cite{JL}.) And from the description there, one can easily see that $\delta=+$ in the above lemma. (Let us note that when $\pi$ is spherical, this has been proven by Roberts \cite[Proposition 4.3]{Rob01}.)

Finally assume $D$ is non-split, and so $\pi$ is a finite dimensional representation of $\D$. Also here we assume that $F$ is not necessarily non-archimedean. If $\pi$ is one dimensional, then clearly in the above lemma $\delta=+$. So assume that the dimension of $\pi$ is $>1$. We need to resort to an unfortunate global argument. (We believe that there must be a purely local proof but at this moment we can provide only global one.) First we need the following well-known lemma, which is essentially the Jacquet-Langlands lift via theta lifting.
\begin{lemma}\label{L:JL}
Let $V=D$ be a quaternion algebra over a number field $\F$, and let $\tau=\tau(\pi,\pi)$ be an infinite dimensional cuspidal automorphic representation of $\GSO(D,\A)$. Then there is an extension index $\delta$ such that $\sigma=(\tau,\delta)$ has a non-zero theta lift to $\GSp(2,\A)=\GL(2,\A)$.
\end{lemma}
Now let $\F$ be a number field with a place $v_0$ such that $\F_{v_0}=F$. Pick up another place $v_1$. There exists a quaternion algebra $D$ which ramifies exactly at $v_0$ and $v_1$. One can construct a cuspidal automorphic representation $\Pi$ of $\D(\A)$ such that $\Pi_{v_0}=\pi$ and $\Pi_{v_1}$ is one-dimensional. Note that at all the other $v$, $\Pi_v$ is a representation of $\GL(2,\F_v)$. Now by the above lemma, one sees that there is an extension index $\delta$ so that $\sigma=(\tau,\delta)$ has a non-zero theta lift to $\GSp(2,\A)=\GL(2,\A)$ and hence to $\GSp(4,\A)$. So locally at all $v$, $\sigma_v$ has a non-zero theta lift to $\GSp(4, \F_v)$, and hence by what we have shown so far in this proof, one knows that $\delta(v)=1$ for all $v$ other than $v_0$. But by Proposition \ref{P:global_disconnected}, one also knows that $\underset{v}{\prod}\delta(v)=1$ and hence $\delta(v_0)=1$. This completes the proof.
\end{proof}


\subsection{The theta lift from $\GSp(4)$}


Finally, we consider global theta lifts from $\GSp(4,\A)$. Especially, we consider lifts to the orthogonal group $\GO(V_D,\A)$, where $V_D$ is the 6 dimensional quadratic space given by $V_D=D\oplus\H$, where $D$ is a (possibly split) quaternion algebra and $\H$ is the hyperbolic plane. This pair $(\GSp(4),\GO(V_D))$, both locally and globally, has been studied in great detail by Gan and the author in \cite{GT1} in the context of Shalika period, especially for generic representations of $\GSp(4)$. But here we apply our non-vanishing theorem in the previous section to not necessarily generic ones. To consider this pair, let us first mention that to consider the theta correspondence for the pair $(\GSp(4),\GO(V_D))$, we have to actually consider the pair $(\GSp(4)^+,\GO(V_D))$ with $\GSp(4)^+$ defined by
\[
    \GSp(4)^+=\{g\in\GSp(4):\lambda(g)\in\im(\lambda_D)\},
\]
where $\lambda_D:\GO(V_D)\rightarrow\Gm$ is the similitude character. It can be checked that if $v$ is non-archimedean, then $\GSp(4,F_v)^+=\GSp(4,F_v)$, and if $v$ is archimedean and $D_v$ is split, then $\GSp(4,F_v)^+=\GSp(4,F_v)$. But if $v$ is archimedean and $D_v$ is non-split (so necessarily $v$ is real), then $\GSp(4,F_v)^+$ is the identity component of the real Lie group $\GSp(4,\R)$. So if we let $\Sigma_D$ be the set of places $v$ so that $D_v$ is non-split, and $\Sigma_{D,\infty}\subset\Sigma_D$ the subset of archimedean places in $\Sigma_D$, then we have
\[
    \GSp(4,\A)^+=\prod_{v\in\Sigma_{D,\infty}}\GSp(4,F_v)^+\times{\underset{v\notin\Sigma_{D,\infty}}{{\prod}'}}\GSp(4,F_v).
\]
Note that if $\pi$ is a cuspidal automorphic representation of $\GSp(4,\A)$, then $\pi|_{\GSp(4,\A)^+}$ (restriction of forms) is a non-zero cuspidal automorphic representation of $\GSp(4,\A)^+$. By the global theta lift of $\pi$ to $\GO(V_D,\A)$, we mean the global theta lift of $\pi|_{\GSp(4,\A)^+}$ to $\GO(V_D,\A)$. Also locally at $v\in\Sigma_{D,\infty}$, by the local theta lift of $\pi_v$, we mean the local theta lift of one of the constituents of $\pi_v|_{\GSp(4,\R)^+}$. Indeed, if $\pi_v|_{\GSp(4,\R)^+}$ has two constituents, then both of them have non zero theta lifts to $\GO(V_D,\R)$ and their lifts are isomorphic. The following easily follows from the non-vanishing theorem of the previous section.

\begin{prop}\label{P:lift_to_GO}
Let $\pi$ be an irreducible cuspidal automorphic representation of $\GSp(4,\A)$, and $D$ a division quaternion algebra over $F$. Assume $\pi$ satisfies the following three assumptions:
\begin{enumerate}
\item The standard degree 5 $L$-function $L^S(s,\pi)$ does not vanish at $s=1$.
\item At each place $v$, $\pi_v$ has a non-zero theta lift to $\GO(V_D)$.
\item At each place $v$, $\pi_v$ is bounded by $e_v<1$.
\end{enumerate}
Then the global theta lift of $\pi$ to $\GO(V_D,\A)$ does not vanish.
\end{prop}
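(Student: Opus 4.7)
The plan is to reduce the statement to the isometry-group non-vanishing result, Theorem~\ref{T:main1}(2), applied to the pair $(\Sp(4),\OO(V_D))$. Note that $\dim V_D = 6$, so $\tfrac12\dim V_D = 3 = n+1$ with $n=2$, placing us exactly in the range covered by that theorem. First I would restrict $\pi$ to the subgroup $\Sp(4,\A)$ (more precisely, first to $\GSp(4,\A)^+$ and then to $\Sp(4,\A)$) and pick an irreducible cuspidal constituent $\pi'$ of the restriction of forms. Standard results on restriction from similitude to isometry groups, together with the fact that the similitude theta lift is built from the isometry one by restricting and then extending using the similitude character, reduce the proposition to showing that the global isometry theta lift $\Theta_{V_D}(V_{\pi'})$ to $\OO(V_D,\A)$ is non-zero: once this lift is non-zero, one can extend the resulting cusp form on $\OO(V_D,\A)\times\Sp(4,\A)$ across the similitude factor in the standard way to produce a non-zero form in the similitude theta lift of $\pi$.

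To apply Theorem~\ref{T:main1}(2) to $\pi'$, I would verify the three hypotheses in turn. For boundedness, at each place $v$ the representation $\pi_v$ is bounded by some $e_v<1$ by hypothesis~(3); by Lemma~\ref{L:bound_restriction}, every constituent of $\pi_v|_{\Sp(4,F_v)}$ is then bounded by the same $e_v$, so in particular $\pi'_v$ is bounded by $e_v<1$ for all $v$. For local non-vanishing, hypothesis~(2) says $\pi_v$ has a non-zero theta lift to $\GO(V_D,F_v)$; by the similitude-to-isometry dictionary (an analogue of \cite[Lemma~4.2]{Rob96} adapted to this pair), this forces $\pi'_v$ to have a non-zero theta lift to $\OO(V_D,F_v)$ at every $v$. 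Finally, the incomplete degree 5 standard $L$-function of $\pi$ is (up to an Euler product away from $S$) the standard $L$-function of $\pi'$ on $\Sp(4)$, since the degree 5 representation of $\widehat{\GSp(4)}=\GSp(4,\C)$ factors through $\SO(5,\C)=\mathrm{PGSp}(4,\C)$ and agrees with the standard representation under the restriction; therefore hypothesis~(1) gives $L^S(s,\pi')\neq 0$ at $s=1$.

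With all three hypotheses verified, Theorem~\ref{T:main1}(2) yields $\Theta_{V_D}(V_{\pi'})\neq 0$ in $\OO(V_D,\A)$. I would then promote this to a non-vanishing statement for the similitude lift by averaging over the similitude character in the standard fashion used, e.g., in \cite{HST,Rob01,GT1}: the non-zero theta lift of $\pi'$ appears as a constituent of the restriction of the similitude theta lift $\Theta(\pi)$ to $\OO(V_D,\A)\times\Sp(4,\A)$, and since $\pi$ was obtained from $\pi'$ by extension across the similitude group, the similitude lift cannot vanish.

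The main obstacle in this plan is bookkeeping rather than mathematics: one must handle the subgroup $\GSp(4,\A)^+$ carefully at the archimedean places where $D_v$ is non-split, verifying that both constituents of $\pi_v|_{\GSp(4,\R)^+}$ correspond to the same data and that the local non-vanishing hypothesis passes correctly through the further restriction to $\Sp(4,\R)$. The only potentially delicate analytic point is the identification of $L^S(s,\pi)$ with $L^S(s,\pi')$, but this is a standard consequence of the fact that the degree 5 parameter of a $\GSp(4)$ representation is the standard parameter of the underlying $\Sp(4)$ representation. Once these verifications are in place, the proposition is immediate from Theorem~\ref{T:main1}(2).
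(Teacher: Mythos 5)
Your proposal is correct and follows essentially the same route as the paper: restrict to a cuspidal constituent $\pi'$ of $\pi|_{\Sp(4,\A)}$, check the three hypotheses of Theorem~\ref{T:main1}(2) (with the boundedness of $\pi'_v$ coming from Lemma~\ref{L:bound_restriction}, the local non-vanishing from the similitude-to-isometry dictionary, and the $L$-function identification $L^S(s,\pi')=L^S(s,\pi)$), then promote the resulting non-vanishing of the isometry lift to the similitude lift. The paper's proof is terser and leaves the boundedness verification implicit, but your filled-in version matches it in every essential step.
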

\begin{proof}
Let $\pi'$ be an irreducible constituent of $\pi|_{\Sp(4,\A)}$ (restriction of forms). Then the standard $L$-function $L^S(s,\pi')$ is the same as the standard degree 5 $L$-function $L^S(s,\pi)$. Also if each $\pi_v$ has a non-zero local theta lift to $\GO(V_D, F_v)$, then $\pi'_v$ has a non-zero local theta lift to $\OO(V_D, F_v)$. Hence by Theorem \ref{T:main1}, the global theta lift of $\pi'$ to $\OO(V_D,\A)$ does not vanish and so $\pi$ has a non-zero global theta lift to $\GO(V_D,\A)$.
\end{proof}

This allows us to prove Theorem \ref{T:main3}, which is an instance of the Langlands functoriality.

\begin{proof}[Proof of Theorem \ref{T:main3}]
Let us first mention the following fact, which is a known instance of the conservation conjecture on the first occurrence indices of theta lifting. Let $\pi_v$ is an irreducible admissible representation of $\GSp(4,F_v)$ with $v$ non-archimedean, and let
\[
    V_m=\H^m\quad\text{and}\quad V_m^\#=D\oplus\H^{m-2},
\]
and consider the theta lifts $\theta_m(\pi)$ and $\theta^\#_m(\pi)$ to $\GO(V_m)$ and $\GO(V_m^\#)$, respectively. Set
\[
    \begin{cases} m(\pi)=\inf\{m:\theta_m(\pi)\neq 0\}\\
                  m^\#(\pi)=\inf\{m:\theta_m^\#(\pi)\neq 0\}.
    \end{cases}
\]
Then it is known that
\[
    m(\pi)+m^\#(\pi)=6.
\]
The proof of this fact is given in \cite{GT1, GT2}, especially \cite[Section 8]{GT2} and \cite[Section 5]{GT2}.

Now let $\Sigma_{\pi}$ be the set of all (not necessarily) non-archimedean places $v$ so that $\pi_v$ has a non-zero theta lift to $\GO(V_D,F_v)$ where $D$ is the division quaternion algebra. Note that for almost all $v$, $\pi_v$ has a non-zero theta lift to the split $\GO(2,2)$ by \cite[Table 1]{GT4} and hence by the above ``conservation conjecture" for $\GSp(4)$, one can see that $\pi_v$ does not have a non-zero theta lift to $\GO(V_D,F_v)$. So $\Sigma_{\pi}$ is a finite set. Also note that $v_0\in\Sigma_{\pi}$. Now if $|\Sigma_{\pi}|$ is even, one can find a quaternion algebra $D$ over $F$ so that $\Sigma_D=\Sigma_{\pi}$. If $|\Sigma_{\pi}|$ is odd, then one can find a quaternion algebra $D$ over $F$ so that $\Sigma_D=\Sigma_{\pi}\backslash\{v_0\}$. In either case, $D$ is such that at each place $v$, $\pi_v$ has a non-zero theta lift to $\GO(V_D,F_v)$. Hence by the previous proposition, we have a non-zero global theta lift to $\GO(V_D,\A)$.

Assume that this global theta lift is cuspidal, and denote an irreducible constituent by $\sigma$. The restriction $\sigma|_{\GSO(V_D,\A)}$ is identified with a cuspidal automorphic representation $\Pi'$ of $\GL_2(D,\A)$. (See \cite[Introduction]{GT1}.) By \cite{Badulescu, BR}, there is a cuspidal automorphic representation $\Pi$ of $\GL(4,\A)$, namely the global Jacquet-Langlands transfer of $\Pi'$ to $\GL(4,\A)$. By the explicit computations of theta lifts carried out in \cite{GT2, GT4}, one can check that $\Pi$ is the desired functorial lift of $\pi$.

Next assume that the global theta lift of $\pi$ to $\GO(V_D,\A)$ is not cuspidal. Then it has a non-zero theta lift to $\GO(D,\A)$, which is cuspidal. Denote an irreducible constituent by $\sigma$. The restriction $\sigma|_{\GSO(D,\A)}$ is identified with a cuspidal automorphic representation $\pi_1\otimes\pi_2$ of $\D(\A)\times\D(\A)$. If we denote the Jacquet-Langlands lift of $\pi_i$ by $\pi_i^{\JL}$, then the isobaric sum $\pi_1^{\JL}\boxplus\pi_2^{\JL}$ is the desired lift.
\end{proof}

Of course, if $\pi$ is generic, the above transfer has been obtained by Asgari-Shahidi \cite{A-Sh} without any assumption, which can be shown to be the strong lift in \cite{GT2}.\\

Now let us examine how restrictive the assumptions of this theorem are. The $L$-function condition in (1) is supposed to be satisfied for the non-CAP representations, though at this moment the author does not know if there is any method of showing it. (Of course, if $\pi$ is generic, this assumption is known to be satisfied by Shahidi.) Assumption (2) seems to be very small. But it should be mentioned that if $\pi$ corresponds to a Siegel modular form of level $1$, unfortunately this assumption is not satisfied. This can be shown as follows: $\pi_v$ is unramified for all non-archimedean $v$ and hence does not have a non-zero theta lift to $\GO(V_D,F_v)$ for non-split $D$, and at the real place $v$, since $\pi_v$ is non-generic, it cannot have a theta lift to $\GO(3,3)$.

Let us consider assumption (3). This is seemingly the most problematic and indeed crucial for our method. Let $\pi_v$ be an irreducible admissible representation of $\GSp(4)$ which is the Langlands quotient of $\delta_1\times\cdots\times\delta_t\rtimes\tau$. (Of course for $\GSp(4)$, $t$ is at most 2.) Then it is well-known that if $\pi_v$ is unitary, then $e(\delta_1)\leq 1$. (For this assertion, for the non-archimedean case see the table of representations of $\GSp(4)$ by Roberts and Schmidt \cite[Appendix]{table}, and for the archimedean case see \cite[A.3.7]{Przebinda} and citations therein.) Hence the only cases we need to worry about is when $e(\delta_1)=1$. Unfortunately, it is known that such a $\pi_v$ as $e(\delta_1)=1$ does occur as a constituent of a cuspidal representation of $\GSp(4,\A)$. But all the known examples of such cuspidal representations are obtained as theta lifts of characters on some orthogonal groups, which are all CAP representations. (See \cite{Soudry} for the detail of CAP representations.) Accordingly, we believe that such a $\pi_v$ occurs as a constituent of only CAP representations for the following reason. First, let us mention

\begin{lemma}\label{L:generic}
Let $\pi_v$ be an irreducible admissible representation of $\GSp(4,F_v)$ which is the Langlands quotient of the standard module $\delta_1\times\cdots\times\delta_t\rtimes\tau$. If $\pi_v$ is unitary and generic, then $\pi_v$ is bounded by $e_v<1$.
\end{lemma}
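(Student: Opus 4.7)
The plan is a two-step argument combining the standard module conjecture with the classification of the unitary dual of $\GSp(4)$, organized as a proof by contradiction against $e(\delta_1)=1$.

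First I would reduce to the case where $\pi_v$ is a full induced representation. Since $\pi_v$ is generic and arises as the Langlands quotient of the standard module $\delta_1\times\cdots\times\delta_t\rtimes\tau$, the Casselman--Shahidi standard module theorem (in the non-archimedean case) and Vogan's archimedean analogue assert that this standard module is already irreducible. Consequently $\pi_v\cong\delta_1\times\cdots\times\delta_t\rtimes\tau$ as a fully induced representation. This step is crucial because it means I no longer have to analyze the more complicated subquotients: I only have to show that no irreducible full standard module on $\GSp(4)$ with $e(\delta_1)=1$ is unitary.

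Next I would exploit unitarity. As recalled just above the lemma, unitarity gives the a priori bound $e(\delta_1)\leq 1$, so suppose for contradiction that $e(\delta_1)=1$. Since every proper Levi of $\GSp(4)$ has a $\GL$-block of size at most two, $\delta_1$ is either a character $\chi|\;|^{1}$ on $\GL(1)$ or a twisted discrete series $\sigma\otimes|\;|^{1}$ on $\GL(2)$, with $\delta_2,\dots,\delta_t,\tau$ supported on the complementary factor. For each of the three cases (Borel, Klingen, and Siegel inducing data) I would compare $\pi_v$ to the explicit classification of the unitary dual of $\GSp(4,F_v)$: the Roberts--Schmidt tables \cite[Appendix]{table} in the non-archimedean case, and \cite{Paul}, \cite{Ad_Bar95} for $F_v$ real and complex respectively. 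In each configuration the boundary value $e(\delta_1)=1$ coincides with the first reducibility point of the one-parameter family $\delta_1^{\circ}|\;|^{s}\times\delta_2\times\cdots\times\delta_t\rtimes\tau$ issuing from the tempered point $s=0$, so the full standard module at $s=1$ is necessarily reducible, contradicting the irreducibility obtained in the first step.

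The main obstacle is the case-by-case verification of the reducibility locations in the various parabolic cases, which is tedious but purely mechanical given the tables cited above. A more conceptual alternative, which avoids invoking the unitary dual directly, would be to use Shahidi's local coefficient machinery: for tempered $\tau$, the reducibility of the family $\delta_1^{\circ}|\;|^{s}\times\cdots\rtimes\tau$ at $s=1$ is detected by a pole of the associated standard (or Asai) $L$-factor, and Shahidi's criterion for the complementary series then yields the strict inequality $e(\delta_1)<1$ for every irreducible unitary generic representation. Either route produces the desired bound, and hence $\pi_v$ is bounded by some $e_v<1$.
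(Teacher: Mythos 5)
Your proposal is correct and follows essentially the same line of reasoning as the paper: assume $e(\delta_1)=1$ (the boundary allowed by unitarity), show that the full standard module is then reducible by a case-by-case inspection of the relevant classification tables, and conclude via the Casselman--Shahidi/Vogan standard module theorem that the Langlands quotient cannot be generic, a contradiction. The only cosmetic difference is the order in which the two ingredients are invoked (you apply the standard module theorem first to upgrade $\pi_v$ to a full induced representation, while the paper invokes it last); the citations differ slightly at the archimedean places (the paper uses Przebinda's reducibility results rather than Paul and Adams--Barbasch), but the substance is identical.
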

\begin{proof}
For the non-archimedean case, one can verify this by looking at the table of representations of $\GSp(4)$ \cite[Appendix]{table}. But in general, one can argue as follows. First of all, as we mentioned above, if $\pi_v$ is unitary, then $e(\delta_1)\leq 1$. Now if $e(\delta_1)=1$, the induced representation $\delta_1\times\cdots\times\delta_t\rtimes\tau$ is reducible. For the non-archimedean case, this can be also seen in the table \cite[Appendix]{table}, and for the archimedean case, the reducibility of the standard modules is found in \cite[p.91]{Przebinda}. (Note that in \cite{Przebinda}, the reducibility is shown only for the real case, but the proof there is completely general, which can be applied to the complex case.) Now by the so called the standard modules conjecture \cite{CSh}, which is known for $\GSp(4)$, we know that $\pi_v$ cannot be generic. Note that for the archimedean case, the standard modules conjecture for the linear groups was proven by Vogan \cite{V} over thirty years ago.
\end{proof}

Now if $\pi$ is an irreducible cuspidal automorphic representation of $\GSp(4, \A)$ which is not CAP, then it is believed that one can find an irreducible cuspidal automorphic representation $\pi'$ of $\GSp(4,\A)$ which is globally generic such that $\pi_v$ and $\pi'_v$ are in the same local $L$-packet at all $v$. Hence if $\pi_v$ is bounded by $e_v<1$, then so does $\pi'_v$. Thus if $\pi$ is not CAP, then each $\pi_v$ should be bounded by $e_v<1$.

Finally, this consideration leads us to our proof of Theorem \ref{T:main4}, which has been already proven by Kudla, Rallis and Soudry \cite[Theorem 8.1]{KRS} with an assumption that the base field $F$ is totally real. However here we give an alternative proof of the theorem which does not require $F$ be totally real and hence remove this unfortunate totally real assumption present in \cite{KRS}.

\begin{proof}[Proof of Theorem \ref{T:main4}]
Note that since $\pi_v$ is generic, by Lemma \ref{L:generic} we know that $\pi_v$ is bounded by $e_v<1$. Assume $v$ is non-archimedean. By \cite[Section 5 and 7]{GT2} one can see that the theta lift of $\pi_v$ to the anisotropic $\GO(D)$ is zero, and hence has a nonzero theta lift to the split $\GO(3,3)$. Next assume that $v$ is complex. Then one sees from \cite[Theorem 2.8]{Ad_Bar95} that $\pi_v$ has a non-zero theta lift to $\GO(3,3)$. (It is not so immediate to see this from this theorem of \cite{Ad_Bar95}. But one can check it case-by-case. Indeed in the notation of \cite{Ad_Bar95}, if $\pi_v$ is the one with the parameters
\[
    \mu_2=(a_1,a_2)\quad\text{and}\quad\nu_2=(b_1,b_2)
\]
then it has a non-zero theta lift to $\GO(3,3)$ whose parameter is
\[
    \mu_1=(a_1,a_2, 0),\quad\nu_1=(b_1,b_2,0)\quad\text{and}\quad\epsilon=1.
\]
Also let us mention that for complex $v$, there is no discrepancy between the isometry theta correspondence and the similitude one, because $\GSp(4,\C)=\C^\times\Sp(4,\C)$ and similarly for the orthogonal groups.) Finally assume $v$ is real. First, for the sake of contradiction, assume $\pi_v$ has a non-zero theta lift to the anisotropic $\GO(D)=\GO(4,0)$. Then any constituent $\pi'_v$ of the restriction $\pi_v|_{\Sp(4,\R)}$ has a non-zero theta lift to either $\OO(4,0)$ or $\OO(0,4)$. But it is well-known that if this happens, then $\pi'_v$ must be a (limit of) holomorphic discrete series, which is never generic. Hence each constituent $\pi'_v$ does not have a non-zero theta lift to $\OO(4,0)$ or $\OO(0,4)$. By \cite[Corollary 4.16]{Paul} one sees that $\pi'_v$ has a non-zero theta lift to $\OO(3,3)$. (Also see the remark right before Theorem 4.8 of \cite{Paul}.) Hence $\pi_v$ has a non-zero theta lift to $\GO(3,3)$. 

Therefore $\pi$ satisfies the assumption of Proposition \ref{P:lift_to_GO} with $D$ split, and hence $\pi$ has a non-zero global theta lift to $\GO(V_D,\A)$ with $D$ split. Assume this theta lift $\Theta_3(V_\pi)$ is in the space of cusp forms, and assume $\sigma$ is an irreducible constituent of $\Theta_3(V_\pi)$. By the main theorem of \cite{GRS} with necessary modifications to the similitude theta lifting, one can see that $\sigma$ has a non-zero generic cuspidal theta lift $\Theta_2(V_\sigma)$ to $\GSp(4,\A)$. Let $\Pi$ be an irreducible constituent of $\Theta_2(V_\sigma)$. Then $\Pi_v\cong\pi_v$ for all $v$, and hence $\Pi\cong\pi$, which implies $\pi$ is globally generic. If $\Theta_3(V_\pi)$ is in the space of cusp forms, by the tower property of theta lifting one sees that $\pi$ has a non-zero theta lift to $\GO(2,2)$. By applying the same argument, one can see that $\pi$ is globally generic.
\end{proof}

The corollary of the theorem (Corollary \ref{C:main}), which is on the multiplicity one theorem for the generic representation of $\GSp(4)$, is now immediate.

\begin{proof}[Proof of Corollary \ref{C:main}]
If $F$ is totally real, this theorem has been proven by Jiang and Soudry \cite{J-S}. Looking at their proof, one notices that the only reason they need this totally real assumption is that they needed the above theorem (Theorem \ref{T:main4}), which had been proven only with the totally real assumption by \cite{KRS}. Hence the above theorem removes this assumption from \cite{J-S}.
\end{proof}

\end{document}